\numberwithin{equation}{section}
\newcommand{\R}{\mathbb{R}}
\newcommand{\C}{\mathbb{C}}
\newcommand{\Z}{\mathbb{Z}}
\renewcommand{\le}{\leqslant}
\renewcommand{\ge}{\geqslant}
\renewcommand{\leq}{\leqslant}
\renewcommand{\geq}{\geqslant}
\newcommand{\be}{\begin{equation}}
\newcommand{\en}{\end{equation}}
\newcommand{\ee}{\end{equation}}
\newcommand{\bt}{\begin{theorem}}
\newcommand{\et}{\end{theorem}}
\newcommand{\bp}{\begin{proof}}
\newcommand{\ep}{\end{proof}}
\newcommand{\bc}{\begin{cor}}
\newcommand{\ec}{\end{cor}}
\newcommand{\bl}{\begin{lemma}}
\newcommand{\el}{\end{lemma}}
\newcommand{\bprop}{\begin{prop}}
\newcommand{\eprop}{\end{prop}}
\newtheorem{theorem}{Theorem}[section]
\newtheorem{remark}{Remark}
\newtheorem{lemma}[theorem]{Lemma}
\newtheorem{definition}{Definition}
\newtheorem{proposition}[theorem]{Proposition}
\newtheorem{con}{Conjecture}
\newtheorem{example}[con]{Example}
\numberwithin{theorem}{section} \numberwithin{definition}{section}
\newcommand{\RNum}[1]{\uppercase\expandafter{\romannumeral #1\relax}}
\def\R{\mathbb{R}}
\def\Z{\mathbb{Z}}
\def\C{\mathbb{C}}
\newcommand{\bPhi}{\boldsymbol{\Phi}}
\newcommand{\bU}{\mathbf{U}}
\newcommand{\sech}{\operatorname{sech}}
\newcommand{\vertiii}[1]{{\left\vert\kern-0.25ex\left\vert\kern-0.25ex\left\vert #1 
		\right\vert\kern-0.25ex\right\vert\kern-0.25ex\right\vert}}
\theoremstyle{definition}
\author[J. Angulo]{Jaime Angulo Pava}
\author[A. Mu\~noz]{Alexander Mu\~noz}
\date{}
\title[Airy and Schr\"odinger-type equations on looping-edge graphs]{Airy and Schr\"odinger-type equations on looping-edge graphs and applications.}
\begin{document}

\maketitle

\centerline{ Department of Mathematics,
IME-USP}
 \centerline{Rua do Mat\~ao 1010, Cidade Universit\'aria, CEP 05508-090,
 S\~ao Paulo, SP (Brazil)}
 \centerline{\tt angulo@ime.usp.br, alexd@usp.br}

\section*{Abstract}

The aim of this work is to study the Airy and Schrödinger operators on 
looping-edge graphs, a class of metric graphs consisting of a circle and 
a finite number $N$ of infinite half-lines attached to a common vertex. 
For the Airy operator, we characterize all extensions generating unitary 
and contractive dynamics in terms of self-orthogonal subspaces and linear 
operators acting on indefinite inner product spaces (Krein spaces) 
associated to the boundary values at the vertex. Employing similar
abstract techniques, we then describe a systematic way to produce self-adjoint extensions of the Schr\"odinger operator that are compatible with prescribed boundary relations on looping-edge and $\mathcal{T}$-shaped graphs.

\qquad\\
\textbf{Mathematics  Subject  Classification (2020)}. Primary
35Q51, 35Q55, 81Q35, 35R02; Secondary 47E05.\\
\textbf{Key  words}.  Schr\"odinger equation, Airy equation, quantum graphs, Krein spaces, extension theory of symmetric operators.

\section{Introduction}

Nonlinear evolution models on metric graphs arise as models in wave 
propagation, for instance, in quasi-one-dimensional systems (e.g., 
meso- or nanoscale) that resemble a thin neighborhood of a graph. These 
models provide a convenient means to study various physical effects in 
the real world, both from theoretical and experimental perspectives. The 
flexibility in setting the geometry of the graph configuration allows for 
the creation of different dynamics. To mention a few examples of 
bifurcated systems: Josephson junction structures, networks of planar 
optical waveguides and fibers, branched structures associated with DNA, 
blood pressure waves in large arteries, nerve impulses in complex arrays 
of neurons, conducting polymers, and Bose-Einstein condensation (see 
\cite{BK, BlaExn08, BurCas01, Chuiko, Crepeau, Ex, Fid15, K, Mug15, 
Noj14, SBM} and references therein).

Recently, nonlinear models on graphs, such as the nonlinear 
Schr\"odinger equation, the sine-Gordon model, and the Korteweg-de Vries 
model, have been studied extensively (see \cite{AdaNoj14, AngGol17a, 
AngGol17b, AC, AC1, AP1, AP2, AP3, AST, Fid15, Noj14} and references 
therein). From a mathematical viewpoint, an evolution model on a graph is 
equivalent to a system of PDEs defined on the edges (intervals), where 
the coupling is determined exclusively by the boundary conditions at the 
vertices (known as the ``topology of the graph''), which governs the 
dynamics on the network via groups. This area of study has attracted 
significant attention, particularly in the context of soliton transport. 
Solitons and other nonlinear waves in branched systems provide valuable 
insights into the dynamics of these models.

Addressing these issues is challenging because both the equations of 
motion and the graph topology can be complex. Additionally, a critical 
aspect that complicates the analysis is the presence of one or more 
vertices where a soliton profile, arriving at the vertex along one of 
the edges, undergoes complicated behaviors such as reflection and the 
emergence of radiation. This makes it difficult to observe how energy 
travels across the network. As a result, the study of soliton 
propagation through networks presents significant challenges. The 
mechanisms for the existence and stability (or instability) of soliton 
profiles remain unclear for many types of graphs and models.

We recall that a metric graph $\mathcal{G}$ is a structure represented 
by a finite number of vertices $V=\{\nu_i\}$ and a set of adjacent edges 
at the vertices $E=\{e_j\}$ (for further details, see \cite{BK}). Each 
edge $e_j$ can be identified with a finite or infinite interval of the 
real line, $I_e$. Thus, the edges of $\mathcal{G}$ are not merely 
abstract relations between vertices but can be viewed as physical 
``wires'' or ``networks'' connecting them. The notation $e \in E$ will 
be used to indicate that $e$ is an edge of $\mathcal{G}$. This 
identification introduces the coordinate $x_e$ along the edge $e$.

An example of a metric graph, which will be the main focus of our study here, 
is a looping-edge graph. This is a graph composed of a circle and a 
finite number $N$ of infinite half-lines attached to a common vertex 
$\nu = L$. If the circle is identified with the interval $[-L, L]$ and 
the half-lines with $[L, \infty)$, we obtain a particular metric graph 
structure that we denote with $\mathcal{G}$, represented by $E = 
\{[-L, L], [L, \infty), \dots, [L, \infty)\}$ and $V = \{L\}$ (see 
Figure~\ref{Fig1}). We use the subscripts $0$ and $j$ to refer to the 
edges $e_0 = [-L, L]$ and $e_j = [L, \infty)$, $j = 1, \dots, N$, 
respectively. The special case $N = 1$ is known as a tadpole graph 
(see also Figure~\ref{Fig1}).

\begin{figure}
    \centering
    \resizebox{\linewidth}{!}{
\begingroup%
  \makeatletter%
  \providecommand\color[2][]{%
    \errmessage{(Inkscape) Color is used for the text in Inkscape, but the package 'color.sty' is not loaded}%
    \renewcommand\color[2][]{}%
  }%
  \providecommand\transparent[1]{%
    \errmessage{(Inkscape) Transparency is used (non-zero) for the text in Inkscape, but the package 'transparent.sty' is not loaded}%
    \renewcommand\transparent[1]{}%
  }%
  \providecommand\rotatebox[2]{#2}%
  \newcommand*\fsize{\dimexpr\f@size pt\relax}%
  \newcommand*\lineheight[1]{\fontsize{\fsize}{#1\fsize}\selectfont}%
  \ifx\svgwidth\undefined%
    \setlength{\unitlength}{595.27559055bp}%
    \ifx\svgscale\undefined%
      \relax%
    \else%
      \setlength{\unitlength}{\unitlength * \real{\svgscale}}%
    \fi%
  \else%
    \setlength{\unitlength}{\svgwidth}%
  \fi%
  \global\let\svgwidth\undefined%
  \global\let\svgscale\undefined%
  \makeatother%
  \begin{picture}(1,0.35595278)%
    \lineheight{1}%
    \setlength\tabcolsep{0pt}%
    \put(0,0){\includegraphics[width=\unitlength,page=1]{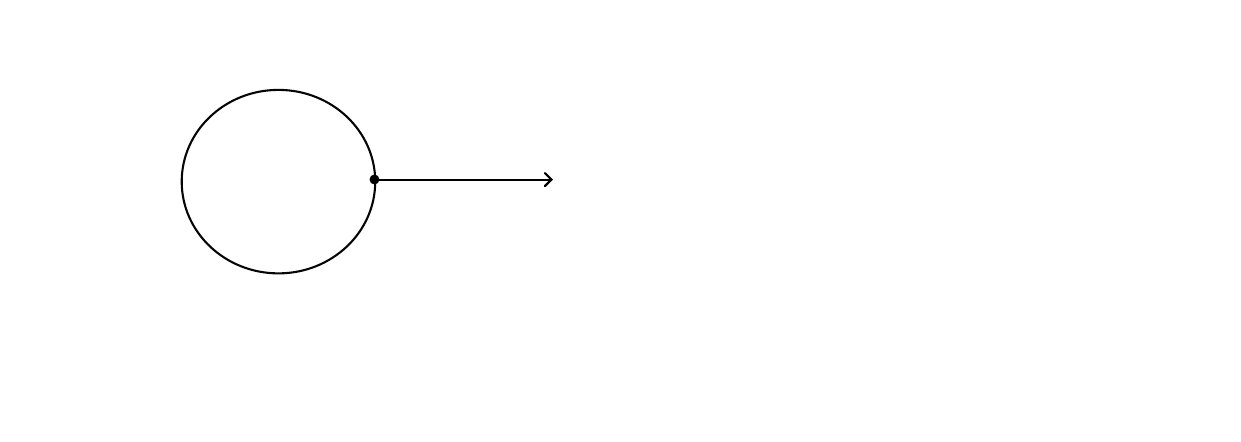}}%
    \put(0.30484472,0.22370066){\color[rgb]{0,0,0}\makebox(0,0)[lt]{\lineheight{1.25}\smash{\begin{tabular}[t]{l}$-L$\end{tabular}}}}%
    \put(0.30858678,0.18552168){\color[rgb]{0,0,0}\makebox(0,0)[lt]{\lineheight{1.25}\smash{\begin{tabular}[t]{l}$L$\end{tabular}}}}%
    \put(0,0){\includegraphics[width=\unitlength,page=2]{loops.pdf}}%
    \put(0.68503488,0.22060474){\color[rgb]{0,0,0}\makebox(0,0)[lt]{\lineheight{1.25}\smash{\begin{tabular}[t]{l}$-L$\end{tabular}}}}%
    \put(0.69456491,0.18753417){\color[rgb]{0,0,0}\makebox(0,0)[lt]{\lineheight{1.25}\smash{\begin{tabular}[t]{l}$L$\end{tabular}}}}%
    \put(0.15483279,0.0799076){\color[rgb]{0,0,0}\makebox(0,0)[lt]{\lineheight{1.25}\smash{\begin{tabular}[t]{l}Tadpole graph\end{tabular}}}}%
    \put(0.5870997,0.07809215){\color[rgb]{0,0,0}\makebox(0,0)[lt]{\lineheight{1.25}\smash{\begin{tabular}[t]{l}Looping-edge graph\end{tabular}}}}%
    \put(0,0){\includegraphics[width=\unitlength,page=3]{loops.pdf}}%
  \end{picture}%
\endgroup%
}
    \caption{Tadpole and looping-edge-type graphs}
    \label{Fig1}
\end{figure}

A wave function $\mathbf{U}$ defined on $\mathcal{G}$ will be understood 
as an $(N+1)$-tuple of functions $\mathbf{U} = (\phi, (\psi_j)_{j=1}^N) 
= (\phi, \psi_1, \dots, \psi_N)$, where $\phi$ is defined on 
$e_0 = [-L, L]$ and $\psi_j$ is defined on $e_j = [L, \infty)$ for 
$j = 1, \dots, N$.

\medskip
The primary aim of this manuscript is to develop a self-contained and 
systematic theory of linear dynamics for both the Airy and the 
Schr\"odinger operators on looping-edge graphs, intended to serve as a 
reference for future nonlinear applications. We emphasize that this 
abstract theory is presented as a standalone contribution: the dynamical 
part is established here independently, so that subsequent studies on 
existence, stability, controllability, or stabilization of nonlinear 
waves on these graphs can build upon it without having to reconstruct 
the linear theory from scratch.\newline
For the Airy operator, to the best of our knowledge, no results on the 
characterization of linear dynamics on looping-edge graphs appear in 
the existing literature. We fill this gap by providing, in terms of 
Krein space techniques, a complete description of all boundary conditions 
at the vertex that make the Airy operator the generator of a unitary 
group or a contraction semigroup. For the Schr\"odinger operator, our 
approach offers an additional structural advantage compared to the classical methods: it allows one to 
prescribe a priori the type of vertex interactions desired (for 
instance, continuity of derivatives at the vertex) and then to 
systematically identify all self-adjoint extensions compatible with 
those conditions. In this way the theory does not merely produce a 
parameter family of extensions, but organizes them according to 
physically or geometrically motivated constraints.

\medskip

Consider the case of nonlinear Schr\"odinger-type equations (NLS)
\begin{equation}\label{NLS}
i\mathbf{U}_t + \Delta\mathbf{U} + (p+1)|\mathbf{U}|^{2p}\mathbf{U} 
= \mathbf{0}, \quad p>0,
\end{equation}
where the action of the Laplacian operator $\Delta$ on a general graph 
$\mathbb{G}$ is given by
\begin{equation}
-\Delta: (u_e)_{e\in E} \to (-u''_e)_{e\in E}.
\end{equation}
The NLS in \eqref{NLS} has been extensively studied in recent literature 
for various types of metric graphs $\mathbb{G}$ and with specific domains 
for the Laplacian, which make it a self-adjoint operator on 
$L^2(\mathbb{G})$ (see the review manuscript \cite{KNP}). For instance, 
research has focused on star graphs (\cite{AdaNoj14, AdaNoj15, AngGol17a, 
AngGol17b, KPG, Noj14} and references therein), looping-edge graphs 
(\cite{An1, An2, CFN, NP, NPS} and references therein), as well as 
flower graphs, dumbbell graphs, double-bridge graphs, and periodic ring 
graphs (\cite{CFN, BMP, KMPX, KP, MP, NPS, Pan} and references therein).

One of the objectives of this work is to obtain a characterization of 
all self-adjoint extensions of the symmetric operator 
$\mathcal H_0\equiv -\Delta$ on the metric graph $\mathcal{G}$ with 
domain $D(\mathcal H_0)$ defined by
\begin{equation}
    \label{cinf} 
    D(\mathcal H_0)=C_0^\infty(-L,L)\oplus \bigoplus_{j=1}^N 
    C_0^\infty(L,+\infty).
\end{equation}
Since the deficiency indices of $(\mathcal H_0, D(\mathcal H_0))$ are 
equal to $2+N$, the extension theory of Krein and von Neumann provides 
a $(2+N)^2$-parameter family of self-adjoint extensions 
$(-\Delta_{ext}, D(-\Delta_{ext}))$ of $\mathcal{H}_0$ on 
$L^2(\mathcal{G})$, where the action of $-\Delta_{ext}$ is given by 
$-\Delta_{ext} = -\Delta$. Each member of this family defines a unitary 
dynamic for the linear evolution problem
\begin{equation}
\left\{ \begin{array}{ll}
i\mathbf{U}_t = -\Delta_{ext} \mathbf{U}\\
\mathbf{U}(0)= \mathbf{u}_0\in D(-\Delta_{ext}).
\end{array} \right.
\end{equation}
We point out that this approach, based on boundary systems \cite{Schu2015}, 
demonstrates that every extension operator 
$(-\Delta_{ext},D(-\Delta_{ext}))$ can be characterized by the boundary 
conditions at the vertices $\nu = \pm L$, treating the point $-L$ as a 
terminal vertex through a relatively simple matrix process (see 
Theorem~\ref{thm:SAext} below). At this level, this part of the work can 
be seen as a complement to the works of Exner \textit{et al.} 
\cite{Ex, ExS, ExSere}, which, to our knowledge, has not formally 
appeared in the literature on $\mathcal{G}$. Furthermore, in general, 
the extensions $(-\Delta_{ext}, D(-\Delta_{ext}))$ do not necessarily 
satisfy continuity conditions at the boundary of $[-L, L]$.  In the case of looping-edge graphs, the extensions 
must ensure that the wave functions are continuous at least on the loop 
(i.e., $\phi(L) = \phi(-L)$). For example, in the study of electron 
motion in thin metallic ``wires,'' it is required that the wave function 
remains continuous at the junction $\nu = L$. These boundary conditions 
have a clear physical interpretation, representing the conservation of 
probability flow at the junction (see \cite{ExS}).

This physical requirement can be formally expressed through the following 
family of self-adjoint operator extensions 
$(-\Delta, D_{Z, N})_{Z \in \mathbb{R}}$ for 
$(\mathcal{H}_0, D(\mathcal{H}_0))$ with
\begin{equation}\label{DomainN}
\begin{array}{ll}
   D_{Z, N} =\{\mathbf{U}\in H^2(\mathcal{G}): 
   &\phi(L)=\phi(-L)=\psi_1(L)=\cdots=\psi_N(L)\;\text{and}\\
   &\phi'(L)-\phi'(-L)=\sum_{j=1}^N\psi_j'(L+)+Z \psi_1(L)\},
\end{array}
\end{equation}
where for any $n\geq 0$,
$$
H^n(\mathcal{G})=H^n(-L, L)\oplus \bigoplus_{j=1}^N H^n(L, +\infty).
$$
The boundary conditions in \eqref{DomainN} are called $\delta$-type if 
$Z \neq 0$, and Neumann--Kirchhoff type if $Z = 0$. The parameter $Z$ 
is a coupling constant between the loop and the several half-lines. The choice of 
coupling at the vertex $\nu = L$ corresponds to a feasible quantum-wire 
experiment (see \cite{Ex,ExSere} and references therein). It should be 
noted that, based on the domain $D_{Z, N}$, several recent studies have 
been conducted, particularly on the existence and stability of 
standing-wave solutions for the model in \eqref{NLS} (see 
\cite{AST, ASTcri, ASTmul, An1, An2, Ardila, CFN, CFN2, KNP, NP, NPS} 
and references therein). 

A distinctive feature of the approach developed here is that 
it allows one to impose vertex conditions \emph{a priori} and then 
identify all extensions compatible with them, rather than parametrizing 
all extensions at once and then selecting the relevant subfamily. Concrete illustrations are provided in Examples~\ref{ex:8} and \ref{ex:10} below.  The 
existence and (in)stability of standing wave profiles for the NLS model \eqref{NLS} on looping-edge and $\mathcal{T}$-shaped graphs under dynamics induced by some of these extensions is an active research direction pursued by the authors. As a further illustration of this framework, in Section~\ref{sec:application} we 
discuss the orbital instability of elementary (yet novel) standing waves for the cubic NLS on $\mathcal{G}$ under pure periodic plus Neumann type of interactions.

Another important goal of this work is to obtain a characterization of 
all boundary conditions under which the Airy-type evolution equation
\begin{equation}\label{kdv0}
\partial_{t}u_\mathbf{e}(x,t)=\alpha_\mathbf{e} \partial_x^3 
u_\mathbf{e}(x,t) + \beta_\mathbf{e} \partial_x u_\mathbf{e}(x,t), 
\quad x\neq 0,\ t\in \mathbb{R},\ \mathbf{e}\in E,
\end{equation}
with real coefficients given by the sequences 
$(\alpha_\mathbf{e})_{e\in E}$, $(\beta_\mathbf{e})_{e\in E}$, 
generates either a contraction semigroup or a unitary group on a 
looping-edge graph (see Theorems~\ref{prop2.2}, \ref{char} and 
\ref{T1}). Our approach is based on the abstract results in 
\cite{Schu2015} applied to the case of the Airy operator
\begin{equation}
    \label{0airy}
    A_0:\{u_e\}_{e\in E}\mapsto \{\alpha_e \partial_x^3 u_e
    +\beta_e \partial_x u_e\}_{e\in E},
\end{equation}
when we consider the metric graph $\mathcal{G}$ represented by $E$ 
defined above and virtually treating the point $-L$ as a terminal vertex. In contrast with the symmetric operator case $\mathcal{H}_0$ the skew-symmetry of $A_0$ induce a slightly different boundary system and therefore different structure of the characterizations (compare Theorems~\ref{thm:SAext} and \ref{char} below)
Therefore, notions of self-orthogonal subspaces and operators based on 
indefinite inner product spaces (Krein spaces) will be necessary. Thus, 
we obtain in particular a description of all (skew-)self-adjoint 
realizations of $A_0$, i.e., skew-self-adjoint restrictions of $A_0^*$ 
and therefore self-adjoint extensions of $iA_0$. For instance, in 
Example~\ref{ex1} below we obtain $\delta$-interaction type boundary 
conditions on a looping-edge graph in a similar way as done in the case 
of metric star-shaped graphs (see, for instance, Theorem~3.6 in 
\cite{AC}) under conditions on $\alpha_\mathbf{e}$ and $\beta_\mathbf{e}$. 
We also establish conditions for obtaining extensions of $A_0$ 
generating contractive dynamics.

We recall that the evolution linear model in \eqref{kdv0} is associated 
to the following vectorial Korteweg--de Vries (KdV) equation
\begin{equation}\label{kdv3}
\partial_{t}u_\mathbf{e}(x,t)=\alpha_\mathbf{e} \partial_x^3 
u_\mathbf{e}(x,t) + \beta_\mathbf{e} \partial_x u_\mathbf{e}(x,t)
+2 u_\mathbf{e}(x,t) \partial_x u_\mathbf{e}(x,t)
\end{equation}
on the metric graph $\mathcal{G}$. Recently, Mugnolo, Noja, and Seifert 
\cite{MNS} characterized all boundary conditions under which the Airy 
equation in \eqref{kdv0} generates either a semigroup or a group on a 
metric graph that has a structure represented by the set 
$E \equiv E_- \cup E_{+}$, where ${E}_{+}$ and ${E}_{-}$ are finite or 
countable collections of semi-infinite edges $e$ parametrized by 
$(-\infty, 0)$ and $(0, +\infty)$, respectively, connected at a unique 
vertex $\nu = 0$. In this case the graph is sometimes called a 
star-shaped metric graph. Several arguments used in \cite{Schu2015} and 
\cite{MNS} in star-shaped graphs are extended in this work for 
looping-edge graphs. We recall that when the star-shaped metric graph is 
balanced ($|{E}_{+}| = |{E}_{-}|$), the extension theory implies that 
there is a $9|{E}_{+}|^2$-parameter family of skew-self-adjoint 
operators for $A_0$. On a looping-edge graph, when the number of 
attached half-lines is $N=2k$ for some integer $k$ with the conditions 
$\alpha_j,\beta_j>0$ if $j$ is odd and $\alpha_j,\beta_j<0$ if $j$ is 
even, there must be a $9(k+1)^2$-parameter family of unitary groups, 
each one having its own representation.

As far as we know, the study of the KdV equation on metric graphs remains 
relatively underdeveloped. Cavalcante \cite{Cav1} studied the local 
well-posedness of the Cauchy problem associated with \eqref{kdv3} in 
Sobolev spaces $H^s(\mathcal{Y})$ with low regularity on a 
$\mathcal{Y}$-junction graph, 
$\mathcal{Y} = (-\infty, 0) \cup (0, +\infty) \cup (0, +\infty)$. 
Subsequently, Angulo and Cavalcante \cite{AC} established a criterion 
for the linear instability of stationary solutions for the KdV on 
arbitrary star-shaped metric graphs. This criterion was applied to tail 
and bump profiles on balanced graphs with specific boundary conditions 
at the vertex $\nu = 0$, ensuring they remain linearly unstable (see 
Theorem~6.2 in \cite{AC}). More recently, Mugnolo, Noja, and Seifert 
\cite{MNS2} investigated the existence of solitary waves on a 
star-shaped metric graph in terms of the coefficients of the KdV 
equation $(\alpha_\mathbf{e})_{e\in E}$, $(\beta_\mathbf{e})_{e \in E}$ 
on each edge, with coupling conditions at the vertex $\nu = 0$, which 
may or may not involve continuity, and examined the speeds of the 
traveling waves. Lastly, Angulo and Cavalcante \cite{AC2} developed a 
local well-posedness theory for the KdV on a balanced star graph, 
considering boundary conditions at the vertex $\nu = 0$ of 
$\delta$-interaction type, and extended the linear instability result 
from \cite{AC} to establish nonlinear instability.

Results on the dynamics of the KdV model on looping-edge graphs, such 
as local well-posedness of the Cauchy problem and existence and stability 
of solitary waves, are not known in the literature to the best of the 
authors' knowledge. The theory developed in this manuscript is intended 
to fill that gap at the linear level and to lay the groundwork for 
future nonlinear investigations. Several applications are already 
emerging from the framework presented here. On the one hand, the 
boundary system based on separation of first derivatives, developed in 
Section~4.3, has already proven effective in the analysis of dispersive 
equations on metric graphs: the technique was used in \cite{MNS2} to 
construct explicit solitary-wave solutions for the KdV equation on 
star graphs, where the first-derivative coupling at the vertex plays a 
distinguished role. Moreover, the domains generated via separation of 
derivatives in Example~\ref{control} directly recover the looping-edge analogue to the vertex conditions employed in 
\cite{AmCrep,PCP}, works on the stabilization and 
controllability of the KdV on networks with bounded and unbounded domains. In general these abstract theory can be used to yield the class of boundary conditions needed for controllability 
problems on metric graphs, in fact the class of extensions $A_{Y,R_1}$ constitute 
the functional-analytic backbone of an ongoing program on exponential 
stabilization and controllability of the KdV equation on $\mathcal{G}$, 
currently in an advanced stage of research. As a concrete illustration of the use of contractive dynamics, in 
Section~\ref{s4.4} we show that adding a damping term $-\gamma\mathbf{U}$ 
to the Airy evolution under the boundary conditions of 
Example~\ref{ex6} yields exponential decay of the $L^2(\mathcal{G})$-energy 
at rate $\gamma$, with the dissipation arising entirely from the 
boundary structure rather than from the dispersive coefficients.

\vskip0.1in
\noindent\textbf{Organization of the article.} 
Section~2 collects the necessary preliminaries, including notation, 
the Stone and Lummer--Phillips theorems, and the relevant theory of 
operators on Krein spaces. Section~3 analyzes the free Airy operator 
and computes its deficiency indices on the looping-edge graph. 
Section~4 is the core of the paper and is devoted to the Airy operator 
on looping-edge graphs. It is divided into three parts: 
Section~4.1 treats the case of an even number of half-lines and 
characterizes extensions generating unitary dynamics; 
Section~4.2 handles an arbitrary number of half-lines and 
characterizes extensions generating contractive dynamics; 
Section~4.3 develops an alternative boundary system based on 
the separation of derivatives, providing a further class of 
extensions with contractive dynamics. 
Finally, Section~5 applies the same abstract framework to the 
Schr\"odinger operator on looping-edge graphs, characterizing 
its self-adjoint extensions and the associated unitary dynamics.
\section{Preliminaries}
\subsection{Notation}Let us introduce some notation related to the boundary values of a function posed in $\mathcal{G}$. For $\textbf{U}=(\phi,\psi_1,\dots,\psi_N)$, we denote the vector of values of $\textbf{U}$ and its derivatives at the vertex with \begin{equation*}
\begin{split}
    \ &\vec{\mathbf{U}} := \vec{\mathbf{U}}(L) 
= (\phi(-L),\ \phi(L),\ \psi_1(L),\ \dots, \psi_N(L))^T,\\ \ &
\vec{\mathbf{U}}' := \vec{\mathbf{U}}'(L):=(\phi'(-L),\ \phi'(L),\ \psi_1'(L),\ \dots, \psi_N'(L) )^T,\\\ &
\vec{\mathbf{U}}'' := \vec{\mathbf{U}}''(L):=(\phi''(-L),\ \phi''(L),\ \psi_1''(L),\ \dots, \psi_N''(L) )^T.
\end{split}
\end{equation*}

For the components of $\textbf{U}$, we denote the derivatives values at the vertex with $$\partial \phi(-L):=\begin{pmatrix}
    \phi(-L)\\\phi'(-L)\\\phi''(-L)
\end{pmatrix},\ \partial \phi(L):=\begin{pmatrix}
    \phi(L)\\\phi'(L)\\\phi''(L)
\end{pmatrix}
\mbox{ and } \partial \psi_j (L)=\begin{pmatrix}
    \psi_j(L)\\\psi_j'(L)\\\psi_j''(L)
\end{pmatrix}. $$

We adopt concatenation of vectors in vertical stack. For instance, \begin{equation*}
    \begin{pmatrix}
        \partial \phi(-L)\\\partial \phi(L)
    \end{pmatrix}=\begin{pmatrix}
        \phi(-L)\\\phi'(-L)\\\phi''(-L)\\\phi(L)\\\phi'(L)\\\phi''(L)
    \end{pmatrix}.
\end{equation*} 
For convenience we denote for $\psi=(\psi_1,\dots,\psi_N)$
\begin{equation}
    \partial \psi (L):=\begin{pmatrix}
       \partial \psi_1(L)\\ \partial \psi_2(L)\\\vdots\\\partial \psi_N(L)
    \end{pmatrix}, \ \partial \psi_{even} (L):=\begin{pmatrix}
       \partial \psi_2(L)\\ \partial \psi_4(L)\\\vdots\\\partial \psi_{2k}(L)
    \end{pmatrix} \mbox{ and }\partial \psi_{odd} (L):=\begin{pmatrix}
       \partial \psi_1(L)\\ \partial \psi_3(L)\\\vdots\\\partial \psi_{2k-1}(L)
    \end{pmatrix}.
\end{equation}
\subsection{Stone and Lumer-Phillips Theorems}\label{lumm}
In this subsections we recall some well-known features of group theory.

Let $X$ be a Banack space and $X'$ its dual space. Consider $\mathcal{J}(x):=\{x'\in X' \mid \langle x, x'\rangle =\|x\|^2=\|x'\|^2 \}$. Note $\mathcal{J}(x)$ is non-empty by Hahn-Banach Theorem. Moreover, if $X$ is a Hilbert space then $\mathcal{J}(x)$ consists of a single element.

Recall that a linear operator $(A,D(A))$ on a Banach space $X$ is called dissipative if $\|(\lambda-A)x\|\ge \lambda \|x\|$. 

\begin{proposition}
    \label{3.23} An operator $(A,D(A))$ is dissipative if and only if for every $x\in D(A)$ there exists $j(x)\in \mathcal{J}(x)$ such that \begin{equation}\label{A1}
        \mbox{Re}\langle Ax,j(x)\rangle\le0.
    \end{equation}
    If $A$ is the generator of a strongly continuous contraction semigroup, then \eqref{A1} holds for all $x\in D(A)$ and arbitrary $x'\in \mathcal{J}(x)$.
\end{proposition}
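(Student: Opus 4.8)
The plan is to treat the three assertions in turn, the only non-elementary ingredient being a weak-$*$ compactness argument in the converse.

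\textbf{The implication $\eqref{A1}\Rightarrow$ dissipativity.} Fix $\lambda>0$ and $x\in D(A)$; the case $x=0$ is trivial. Choosing $j(x)\in\mathcal J(x)$ as in the hypothesis and using $\langle x,j(x)\rangle=\|x\|^2$ together with $\|j(x)\|=\|x\|$, I would estimate
\begin{equation*}
\lambda\|x\|^2=\operatorname{Re}\langle\lambda x,j(x)\rangle=\operatorname{Re}\langle(\lambda-A)x,j(x)\rangle+\operatorname{Re}\langle Ax,j(x)\rangle\le\operatorname{Re}\langle(\lambda-A)x,j(x)\rangle\le\|(\lambda-A)x\|\,\|x\|,
\end{equation*}
and divide by $\|x\|$ to get $\|(\lambda-A)x\|\ge\lambda\|x\|$.

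\textbf{The implication dissipativity $\Rightarrow\eqref{A1}$.} Fix $x\in D(A)$, $x\neq0$. For each $\lambda>0$ pick a norming functional $z_\lambda'\in X'$ with $\|z_\lambda'\|=1$ and $\langle(\lambda-A)x,z_\lambda'\rangle=\|(\lambda-A)x\|$, obtained by normalizing any element of $\mathcal J((\lambda-A)x)$; this is legitimate since dissipativity gives $\|(\lambda-A)x\|\ge\lambda\|x\|>0$. Expanding
\begin{equation*}
\lambda\|x\|\le\|(\lambda-A)x\|=\operatorname{Re}\langle(\lambda-A)x,z_\lambda'\rangle=\lambda\operatorname{Re}\langle x,z_\lambda'\rangle-\operatorname{Re}\langle Ax,z_\lambda'\rangle
\end{equation*}
yields, on the one hand, $\operatorname{Re}\langle Ax,z_\lambda'\rangle\le\lambda\big(\operatorname{Re}\langle x,z_\lambda'\rangle-\|x\|\big)\le0$, and, on the other hand after dividing by $\lambda$, $\operatorname{Re}\langle x,z_\lambda'\rangle\ge\|x\|-\lambda^{-1}\|Ax\|$. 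By Banach--Alaoglu the net $(z_\lambda')_{\lambda>0}$ has a weak-$*$ cluster point $z'$ as $\lambda\to\infty$; weak-$*$ lower semicontinuity of the norm gives $\|z'\|\le1$, while passing to the limit in the two inequalities gives $\operatorname{Re}\langle x,z'\rangle\ge\|x\|$ and $\operatorname{Re}\langle Ax,z'\rangle\le0$. The first of these, combined with $|\langle x,z'\rangle|\le\|x\|\|z'\|\le\|x\|$, forces $\langle x,z'\rangle=\|x\|$ and $\|z'\|=1$. Then $j(x):=\|x\|\,z'$ satisfies $\langle x,j(x)\rangle=\|x\|^2=\|j(x)\|^2$, so $j(x)\in\mathcal J(x)$, and $\operatorname{Re}\langle Ax,j(x)\rangle=\|x\|\operatorname{Re}\langle Ax,z'\rangle\le0$.

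\textbf{The last assertion.} If $A$ generates the strongly continuous contraction semigroup $(T(t))_{t\ge0}$, then for $x\in D(A)$ and \emph{any} $x'\in\mathcal J(x)$ one has $Ax=\lim_{t\to0^+}t^{-1}(T(t)x-x)$, hence
\begin{equation*}
\operatorname{Re}\langle Ax,x'\rangle=\lim_{t\to0^+}\frac1t\big(\operatorname{Re}\langle T(t)x,x'\rangle-\|x\|^2\big).
\end{equation*}
Since $\operatorname{Re}\langle T(t)x,x'\rangle\le\|T(t)x\|\,\|x'\|\le\|x\|\,\|x'\|=\|x\|^2$ by contractivity and $x'\in\mathcal J(x)$, every difference quotient is $\le0$, so the limit is $\le0$.

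I expect the only point requiring genuine care to be the weak-$*$ compactness step in the converse: since $X'$ need not be metrizable one must work with cluster points of nets rather than subsequential limits and invoke weak-$*$ lower semicontinuity of the dual norm; the remaining steps are direct estimates.
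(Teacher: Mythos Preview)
Your proof is correct and is essentially the standard argument found in Engel--Nagel (Proposition II.3.23), which is precisely what the paper does: it gives no proof of its own and simply refers the reader to that reference. In that sense you have supplied exactly what the paper omits, and your three steps---the direct estimate for one direction, the Banach--Alaoglu cluster-point argument for the converse, and the difference-quotient computation using contractivity for the final assertion---match the textbook treatment.
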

\begin{proof}
    See Proposition 3.23 in \cite{Engel}
\end{proof}
Note in the case $X$ is a Hilbert space, condition \eqref{A1} reduces to $\mbox{Re}(Ax\mid x)\le 0$. 
\begin{theorem}(Lummer-Phillips)
    \label{3.17} Let $(A,D(A))$ be a densely defined operator on a Banach space $X$. If both, $A$ and $A^*$ are dissipative, then the closure $\overline{A}$ of $A$ generates a contraction semigroup on $X$. 
\end{theorem}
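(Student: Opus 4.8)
The plan is to reduce the claim to the Hille--Yosida/surjectivity form of the Lumer--Phillips theorem: once $\overline A$ is shown to be a closed, densely defined, dissipative operator for which $\lambda-\overline A$ is onto $X$ for some $\lambda>0$, it generates a contraction semigroup. The dissipativity of $A^*$ will enter only to force the surjectivity of $\lambda-\overline A$, via a Hahn--Banach duality argument.

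First I would dispatch the preliminary structural facts. By hypothesis and the definition of dissipativity (as recalled before Proposition \ref{3.23}), $\|(\lambda-A)x\|\ge\lambda\|x\|$ for all $\lambda>0$ and $x\in D(A)$; this estimate is stable under the limits defining the closure, so $A$, being densely defined, is closable and $\overline A$ is again dissipative (see, e.g., \cite{Engel}). Consequently, for every $\lambda>0$ the operator $\lambda-\overline A$ is injective, $(\lambda-\overline A)^{-1}$ is bounded by $\lambda^{-1}$ on its range, and --- because $\overline A$ is closed --- $\operatorname{ran}(\lambda-\overline A)$ is a \emph{closed} subspace of $X$.

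The crux is to upgrade ``closed range'' to ``full range''. Fix $\lambda>0$. Since $\operatorname{ran}(\lambda-\overline A)$ is closed, by Hahn--Banach it equals $X$ as soon as its annihilator in $X'$ is trivial. So suppose $x'\in X'$ satisfies $\langle(\lambda-\overline A)x,x'\rangle=0$ for every $x\in D(\overline A)$; in particular for every $x\in D(A)\subseteq D(\overline A)$, whence, straight from the definition of the adjoint, $x'\in D(A^*)$ and $(\lambda-A^*)x'=0$. But $A^*$ is dissipative, so $\lambda\|x'\|\le\|(\lambda-A^*)x'\|=0$, forcing $x'=0$. Hence $\operatorname{ran}(\lambda-\overline A)$ is dense as well as closed, i.e. $\operatorname{ran}(\lambda-\overline A)=X$.

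It then follows that $(0,\infty)\subseteq\rho(\overline A)$ with $\|(\lambda-\overline A)^{-1}\|\le\lambda^{-1}$, which is exactly the Hille--Yosida hypothesis for a contraction semigroup; therefore $\overline A$ generates a strongly continuous contraction semigroup on $X$. The only delicate point is the duality step: one must be sure that the annihilator of $\operatorname{ran}(\lambda-\overline A)$ is precisely $\ker(\lambda-A^*)$, which is why it is cleanest to test the annihilation condition against $x\in D(A)$ and invoke $A^*=(\overline A)^*$ directly; everything else is a routine use of the dissipativity estimates.
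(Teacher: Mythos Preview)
Your argument is correct and is essentially the standard proof of this version of the Lumer--Phillips theorem: closability and dissipativity of $\overline A$, closedness of $\operatorname{ran}(\lambda-\overline A)$, then the Hahn--Banach/duality step using dissipativity of $A^*$ to kill the annihilator, and finally Hille--Yosida. The paper itself does not prove the statement at all; it simply refers the reader to Corollary~3.17 in Engel--Nagel \cite{Engel}. What you have written is, in outline, precisely the argument one finds there, so there is no methodological divergence to compare --- you have supplied the proof that the paper chose to outsource.

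One small remark on presentation: when you pass from the annihilation condition on $D(\overline A)$ to $x'\in D(A^*)$, you implicitly use $(\overline A)^*=A^*$, which holds because $A$ is closable and densely defined; you note this at the end, but it would read more cleanly placed where it is actually used.
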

\begin{proof}
    See Corollary 3.17 in \cite{Engel}.
\end{proof}

We also recall the Stone's theorem for unitary groups.
    \begin{theorem}[Stone]\label{stone}
        Let $(A, D(A))$ be a densely defined operator on a Hilbert space $H$. Then $A$ generates a unitary group $(T(t))_{t\in \R}$ on $H$ if and only if $A$ is skew-self-adjoint, \textit{i.e.}, $A^*=-A$ and $D(A^*)=D(A)$.
    \end{theorem}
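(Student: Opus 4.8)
The plan is to prove the two implications separately. For the nontrivial direction ($\Leftarrow$) I would avoid the spectral theorem and instead extract the group from the Lumer--Phillips machinery already recorded in Theorem \ref{3.17}; the functional-calculus route — observing that $A$ is skew-self-adjoint precisely when $iA$ is self-adjoint, and then defining $T(t):=e^{-it(iA)}$ via the Borel functional calculus of the self-adjoint operator $iA$ — is a valid and shorter alternative worth keeping in mind. Proceeding with the first route, assume $A^*=-A$ with $D(A^*)=D(A)$. Then $A$ is densely defined and, being minus an adjoint, closed. For $x\in D(A)=D(A^*)$ one computes $(Ax\mid x)=(x\mid A^*x)=-(x\mid Ax)=-\overline{(Ax\mid x)}$, so $\mathrm{Re}(Ax\mid x)=0$; by the Hilbert-space form of Proposition \ref{3.23}, both $A$ and $A^*=-A$ are dissipative. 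Theorem \ref{3.17} then gives that $A$ generates a contraction semigroup $(T_+(t))_{t\ge 0}$ and, applied to $-A$, that $-A$ generates a contraction semigroup $(T_-(t))_{t\ge 0}$ (here $\overline{A}=A$ and $\overline{-A}=-A$, as both operators are already closed).

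The crucial point is then that $T_+(t)T_-(t)=T_-(t)T_+(t)=I$ for every $t\ge 0$. Indeed, for $x\in D(A)$ the curve $s\mapsto T_+(s)T_-(s)x$ is differentiable on $[0,\infty)$ with derivative $AT_+(s)T_-(s)x+T_+(s)(-A)T_-(s)x=0$, using invariance of $D(A)$ under $T_\pm$ and the commutation of $T_+(s)$ with its generator $A$; hence the curve is constant, equal to $x$, and this extends to all of $H$ by density and boundedness. Thus each $T_+(t)$ is boundedly invertible with inverse $T_-(t)$, and from $\|x\|=\|T_-(t)T_+(t)x\|\le\|T_+(t)x\|\le\|x\|$ we conclude $T_+(t)$ (and likewise $T_-(t)$) is a surjective isometry, \textit{i.e.}, unitary. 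Setting $T(t):=T_+(t)$ for $t\ge 0$ and $T(t):=T_-(-t)$ for $t<0$, a routine case check gives the group law $T(t+s)=T(t)T(s)$ on $\R$; strong continuity is inherited from that of $T_\pm$; and the generator of $(T(t))_{t\in\R}$ (the two-sided difference quotient at $0$) agrees with the generator of the restriction $(T(t))_{t\ge 0}=(T_+(t))_{t\ge 0}$, namely $A$, since for $x\in D(A)$ the left quotient $(T(-t)x-x)/(-t)\to-(-A)x=Ax$ matches the right one.

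For the necessity direction, suppose $A$ generates a unitary group $(T(t))_{t\in\R}$; then $A$ is automatically closed and densely defined. Differentiating $(T(t)x\mid T(t)y)=(x\mid y)$ at $t=0$ for $x,y\in D(A)$ yields $(Ax\mid y)+(x\mid Ay)=0$, \textit{i.e.}, $-A\subseteq A^*$. To upgrade this to equality, note that $(T(t))_{t\ge 0}$ and $(T(-t))_{t\ge 0}=(T(t)^{-1})_{t\ge 0}$ are both contraction semigroups, with generators $A$ and $-A$ respectively, so $\{\mathrm{Re}\,\lambda>0\}\subseteq\rho(A)$ and $\{\mathrm{Re}\,\lambda<0\}\subseteq\rho(A)$; in particular $1-A$ and $1+A$ map $D(A)$ bijectively onto $H$. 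From $-A\subseteq A^*$ we get $1+A\subseteq 1-A^*$, and $1-A^*$ is injective (if $(1-A^*)y=0$ then $((1-A)x\mid y)=0$ for all $x\in D(A)$, forcing $y=0$ by surjectivity of $1-A$). Given $y\in D(A^*)$, choose $x\in D(A)$ with $(1+A)x=(1-A^*)y$; then $(1-A^*)x=(1+A)x=(1-A^*)y$, so $x=y$ and $y\in D(A)$. Hence $D(A^*)=D(A)$ and $A^*=-A$.

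The main obstacle in each direction is the final step. In the sufficiency part it is manufacturing an honest group — and its unitarity — out of two one-sided contraction semigroups via the identity $T_+(t)T_-(t)=I$. In the necessity part it is promoting skew-symmetry to skew-self-adjointness, which is exactly where one must exploit that $A$ is a \emph{generator}, through the range conditions $\mathrm{ran}(1\pm A)=H$, rather than merely skew-symmetric.
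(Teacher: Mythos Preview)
The paper does not actually prove Stone's theorem: it is merely recalled in Section~\ref{lumm} as a classical result, with no argument supplied. There is therefore no ``paper's own proof'' to compare against. Your proposal, on the other hand, is a complete and correct proof of both implications. The sufficiency argument via Lumer--Phillips (Theorem~\ref{3.17}) applied to both $A$ and $-A$, followed by the identity $T_+(t)T_-(t)=I$ to manufacture the two-sided unitary group, is standard and cleanly executed; the necessity argument correctly upgrades skew-symmetry to skew-self-adjointness using the range condition $\mathrm{ran}(1\pm A)=H$ coming from generator theory. Nothing is missing.
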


\subsection{Operators on Krein Spaces}
Following \cite{Schu2015} we introduce the abstract notions of self-orthogonal subspaces and operators based on indefinite inner product spaces. 
\begin{definition}\label{orto}
   Let $Y$ be a linear space, let $X\subset Y$ be a subspace and $w$ a sesquilinear form defined on $(Y\times Y)^2$. The space $X$ is called $w$-self-orthogonal if \begin{equation}
        X=X^{\perp_w}\equiv \{(x,y)\in Y\times Y\mid w((x,y),(\tilde{x},\tilde{y}))=0 \mbox{ for all } (\tilde{x},\tilde{y})\in X \}.
    \end{equation}
\end{definition}

\begin{definition}
A Krein space $(\mathcal{K},(\cdot\mid\cdot)_{\mathcal{K}})$ is a complex linear space $\mathcal{K}$ on which a indefinite inner product $(\cdot\mid\cdot)_{\mathcal{K}}$ is defined, namely, the function $(x, y)\in \mathcal{K}\times \mathcal{K} \to (x \mid y)_{\mathcal{K}}\in \mathbb C$  satisfies the following axioms:
\begin{enumerate}
\item[i)] Linearity in the first argument:
$$
(\alpha x_1+\beta x_2\mid y)_{\mathcal{K}}= \alpha (x_1\mid y)_{\mathcal{K}}+ \beta (x_2\mid y)_{\mathcal{K}}
$$
for all $x_1,x_2, y\in \mathcal{K}$ and all complex numbers $\alpha, \beta$;
\item[ii)] antisymmetry:
$$
(x \mid y)_{\mathcal{K}}=\overline{(y\mid x)_{\mathcal{K}}}
$$
for all $x,  y\in \mathcal{K}$;
\item[iii)] nondegeneracy: if $(x \mid y)_{\mathcal{K}}=0$ for all $y\in \mathcal{K}$, then $x=0$.
\end{enumerate}
\end{definition}

Thus, the function $(\cdot\mid \cdot)_{\mathcal{K}}$ satisfies all the properties of a standard inner product
with the possible exception that $(x\mid x)_{\mathcal{K}}$ may be nonpositive for $x\neq 0$.

We recall that if $dim(\mathcal K)=n$ ($\mathcal{K}\cong \mathbb C^n$)  every indefinite inner
product  $(\cdot\mid \cdot)_{\mathcal{K}}$ on $\mathcal{K}$ is determined by  an $n \times n$ invertible and hermitian matrix $H $ such that $(x\mid  y)_{\mathcal{K}}=\langle Hx, y\rangle_{\mathbb C^n}$ holds for all $x, y\in \mathcal{K}$.

\begin{definition}
    Let $\mathcal{K}_1, \mathcal{K}_2$ be Krein spaces and a linear operator $\mathcal{L}:\mathcal{K}_1\to \mathcal{K}_2$.
    \begin{enumerate}
        \item $\mathcal{L}$ is called a $(\mathcal{K}_1,\mathcal{K}_2)$-contraction if \begin{equation}\label{E215}
        \left( \mathcal{L}x \mid \mathcal{L}x\right)_{\mathcal{K}_2}\le \left( x \mid x \right)_{\mathcal{K}_1} \mbox{ for all } x\in D(\mathcal{L}).\end{equation}
        \item If $\mathcal{L}$ is densely defined, the $(\mathcal{K}_2,\mathcal{K}_1)$-adjoint of $\mathcal{L}$ is defined by \begin{equation}
        D(\mathcal{L}^\#):=\{y\in \mathcal{K}_2 \mid \exists z\in \mathcal{K}_1: \left( \mathcal{L}x\mid y\right)_{\mathcal{K}_2}=\left( x\mid z\right)_{\mathcal{K}_1} \mbox{ for all }x\in D(\mathcal{L}) \}\mbox{ and }\mathcal{L}^\#y=z.\end{equation}
        \item $\mathcal{L}$ is called $(\mathcal{K}_1,\mathcal{K}_2)$-unitary if $D(\mathcal{L})$ and $R(\mathcal{L})$ are dense, $\mathcal{L}$ is injective and $\mathcal{L}^\#=\mathcal{L}^{-1}$. 
    \end{enumerate}

\end{definition}

\section{The free Airy operator and deficiency indices} \label{defairy}

In this section we fix notation and specify the functional spaces in which the Airy operator will be studied. We also provide an analysis on how the deficiency indices for the free Airy operator behave.

We consider the Airy operator
 \begin{equation}
    \label{airy}
    A_0:\{u_e\}_{e\in E}\mapsto \{\alpha_e \partial_x^3u_e+\beta_e \partial_x u_e\}_{e\in E},
\end{equation}
where $\alpha_j$ and $\beta_j$ are real numbers for any $j$. For $\mathcal{G}$ determined by the edges $e_0=[-L,L]$ and $e_j=[L,\infty)$ considering $-L$ as a {\it terminal vertex}, we understand the action of $A_0$ over the edges in a component-wise manner, that is, for $\textbf{U}=(\phi, \psi_1,\dots,\psi_N)$ posed on $\mathcal{G}$ we denote 
\begin{equation}
    A_0 \textbf{U}=\left(\alpha_0\partial_x^3\phi+\beta_0\partial_x\phi,\  \alpha_1\partial_x^3\psi_1+\beta_1\partial_x\psi_1, \ \dots\ , \ \alpha_N\partial_x^3\psi_N+\beta_N\partial_x\psi_N\right).
\end{equation} We consider $A_0$ as a densely defined operator in \begin{equation}
    L^2(\mathcal{G}):=L^2(-L,L)\oplus \bigoplus_{j=1}^N  L^2(L,\infty)
\end{equation}
with domain \begin{equation}\label{C0}
    D(A_0)=C_0^\infty(-L,L)\oplus \bigoplus_{j=1}^N C_0^\infty(L,\infty).
\end{equation}

For $\textbf{U}=(\phi,\psi_1\dots,\psi_N)$ and $\textbf{V}=(\tilde{\phi},\tilde{\psi}_1,\dots,\tilde{\psi}_N)$, we denote the inner product on $L^2(\mathcal{G})$ with 
\begin{equation}\label{inner}
    [\textbf{U},\textbf{V}]:=[(\phi,\psi_1,\dots,\psi_N),(\tilde{\phi},\tilde{\psi}_1,\dots,\tilde{\psi}_N)]_{L^2(\mathcal{G})}=\int_{-L}^L \phi \overline{\tilde{\phi}} dx + \sum_{j=1}^N\int_L^\infty \psi_j \overline{\tilde{\psi}_j} dx.
\end{equation}
We consider analogously the Sobolev spaces $$H^s(\mathcal{G}):=H^s(-L,L)\oplus \bigoplus_{j=1}^N H^s(L,\infty).$$ 

Note in particular $D(A_0^*)=H^3(\mathcal{G})$, where {\it $A_0^*=-A_0$ is the adjoint of the minimal operator defined as the closure of  $A_0$ with domain $D(A_0)$ in \eqref{C0}}.

For the densely defined symmetric operator $iA_0$ we consider the deficiency spaces \begin{equation}
    \mathcal{D}_{\mp}(iA_0^*):=\mbox{Ker}(iA_0^*\pm iI),
\end{equation}
with deficiency indices $d_{\mp}=\mbox{dim}\mathcal{D}_{\mp}(iA_0^*)$. 

    From the definition of $A_0$ in \eqref{airy} we have that solutions of $-i\alpha_e \partial_x^3u_e-i\beta_e \partial_x u_e\pm iu_e=0$ without any boundary condition are obtained as linear combinations of complex exponentials of the form $$u_e=c_1 e^{r_1x_e}+c_2 e^{r_2x_e}+c_3 e^{r_3x_e}$$ where $r_1, r_2$ and $r_3$ are the roots of $p(x)=\alpha_e x^3+\beta_ex\mp1$. We refer the reader to the proof of Lemma 2.1 in \cite{MNS} for an explicit description. 

We note that $e^{rx}$ belong to $L^2(-L,L)$ independent on $r$, which means $d_{-_0}=d_{+_0}=3$ along $e_0$. On the other hand, on a half-line $[L,\infty)$ we note $e^{rx}$ is square integrable only if the real part of $r$ is negative, which depends on the signs of $\alpha_e$. We summarize the behavior (on a single edge $[L,\infty)$) in the following table.

\begin{center}
\begin{tabular}{ |c|c|c| } 
 \hline
 Sign of $\alpha_e$ & $d_{-_e}$ & $d_{+_e}$\\ \hline
 $\alpha_e>0$ & 2 & 1\\ 
 $\alpha_e<0$ & 1 & 2\\
 \hline
\end{tabular}
\vspace{3mm}
\end{center}

From the Von Neumann and Krein theory, one can expect self-adjoint extensions of $iA_0$ only if the deficiency indices agree, that is, $d_+=d_-$. In such case, according to Theorem \ref{stone}, there exist extensions that generate unitary dynamics. Several combinations of signs of $\alpha_e$ and $\beta_e$ along the edges of $\mathcal{G}$ may lead to equal deficiency indices. For instance, in case the number of attached half-lines is $N=2k$ for some integer $k$ with the conditions $\alpha_j,\beta_j>0$ if $j$ is odd and $\alpha_j,\beta_j<0$ if $j$ is even, we would get deficiency indices $d_+$ and $d_-$ equal to $3k+3$. Thus, in this case, we know that there must be a $9(k+1)^2$ family of unitary groups for the case of the looping-edge graph, each one having its own representation.

\section{The Airy operator on a looping-edge graph}
As mentioned in the previous section, unitary dynamics are expected only in particular cases where the deficiency indices match for the free Airy operator. In  the following subsections, we will introduce different approaches on a looping-edge graph $\mathcal{G}$ based on whether the deficiency indices coincide or not. We start by considering a "pair-wise" reference frame on $\mathcal{G}$ that generates unitary dynamics, (see Theorem \ref{char} below).

\subsection{Boundary system for even amount of half-lines}  Let us first consider $\mathcal{G}$ consisting of an even amount $2k$ of half-lines attached to the vertex in which exactly half of them have a leading coefficient $\alpha_e>0$. More precisely, we consider $\mathcal{G}$ identified trough $V=\{L,-L\}$ and $E=\{[-L,L]\}\cup E_{+}\cup E_{-}$ where $|E_-|=|E_+|=k\in \Z^+$ and $e\in E_{\pm}$ if and only if $\pm\alpha_e>0$.

Our objective in this section is to provide a first description of the boundary values of $A_0^*=-A_0$ ($D(A_0^*)=H^3(\mathcal{G})$), we proceed as in Section 2 of \cite{Schu2015}. For an alternative description see also Lemma \ref{LN} in Section \ref{S2.4}. 

\begin{definition}
  We define in the graph of $A_0^*=-A_0$, 
  $$
  G(A_0^*)=\{(\textbf{U},A_0^* \textbf{U})\mid \textbf{U} \in D(A_0^*)\}
  $$
   the standard anti-symmetric form $\Omega: G(A_0^*)\times G(A_0^*) \to \mathbb{C}, $ defined by \begin{equation}\label{omega}
    \Omega((\textbf{U},A_0^*\textbf{U}),(\textbf{V},A_0^*\textbf{V
})):=[A_0^*\textbf{U}, \textbf{V
}]+[\textbf{U}, A_0^*\textbf{V
}], 
\end{equation}  
with $[\cdot,\cdot]$ being the $L^2(\mathcal G)$-inner product in \eqref{inner}.
\end{definition}

\begin{proposition}
    \label{L1}
    For any $\textbf{U}=(\phi,\psi_1\dots,\psi_N)$ and $\textbf{V}=(\tilde{\phi},\tilde{\psi}_1,\dots,\tilde{\psi}_N)$ in $D(A_0^*)$ we have
    \begin{equation}\label{boundaryf}
        \begin{split}
            &[A_0^*\textbf{U},\textbf{V}]+[\textbf{U},A_0^*\textbf{V}]=\\&\hspace{5mm}\left( \begin{pmatrix} \beta_0 & 0 & \alpha_0 \\ 0 & -\alpha_0 & 0 \\ \alpha_0 & 0 & 0 \end{pmatrix} \begin{pmatrix}
                \phi(-L)\\\phi'(-L)\\\phi''(-L)
            \end{pmatrix}\mid \begin{pmatrix}
                \tilde{\phi}(-L)\\\tilde{\phi}'(-L)\\\tilde{\phi}''(-L)
            \end{pmatrix} \right)  -
            \left( \begin{pmatrix} \beta_0 & 0 & \alpha_0 \\ 0 & -\alpha_0 & 0 \\ \alpha_0 & 0 & 0 \end{pmatrix} \begin{pmatrix}
                \phi(L)\\\phi'(L)\\\phi''(L)
            \end{pmatrix}\mid \begin{pmatrix}
                \tilde{\phi}(L)\\\tilde{\phi}'(L)\\\tilde{\phi}''(L)
            \end{pmatrix} \right)\\&\hspace{5mm}+\sum_{j=1}^N \left( \begin{pmatrix} \beta_j & 0 & \alpha_j \\ 0 & -\alpha_j & 0 \\ \alpha_j & 0 & 0 \end{pmatrix}\begin{pmatrix}
                \psi_j(L)\\\psi_j'(L)\\\psi_j''(L)
            \end{pmatrix} \mid \begin{pmatrix}\tilde{\psi}_j(L)\\\tilde{\psi}_j'(L)\\\tilde{\psi}_j''(L)
            \end{pmatrix}\right)
\\&\hspace{5mm}=\left( B_0\partial \phi(-L)\mid\partial \tilde{\phi}(-L)\right)-\left( B_0\partial \phi(L)\mid\partial \tilde{\phi}(L)\right) +\sum_{j=1}^N\left( B_j \partial \psi_j(L)\mid \partial \tilde{\psi}_j(L)\right). 
        \end{split}
    \end{equation}
where \begin{equation}
   B_j:=\begin{pmatrix} \beta_j & 0 & \alpha_j \\ 0 & -\alpha_j & 0 \\ \alpha_j & 0 & 0 \end{pmatrix},\ j=0,\dots,N,
\end{equation}
and $ (\cdot\mid\cdot)$ represents the standard inner product in $\mathbb C^3$.
\end{proposition}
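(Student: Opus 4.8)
The plan is to prove the identity by a direct integration-by-parts computation on each edge, then repackage the resulting boundary terms into the matrices $B_j$. For each edge I would compute the contribution of the third-order part and the first-order part separately. On an interval $[a,b]$, for smooth enough $u,v$ one has $\int_a^b (u'''\bar v + u\bar v''')\,dx = \big[u''\bar v - u'\bar v' + u\bar v''\big]_a^b$, which follows by integrating by parts three times (or, more cleanly, by noting $\frac{d}{dx}(u''\bar v - u'\bar v' + u\bar v'') = u'''\bar v + u\bar v'''$). Similarly $\int_a^b(u'\bar v + u\bar v')\,dx = \big[u\bar v\big]_a^b$. So the key step is: for a single edge with coefficients $\alpha_e,\beta_e$, the quantity $\int(\alpha_e(u'''\bar v+u\bar v''')+\beta_e(u'\bar v+u\bar v'))\,dx$ equals $\big[\alpha_e(u''\bar v - u'\bar v' + u\bar v'') + \beta_e u\bar v\big]_a^b$. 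Evaluating at the endpoint $x=c$, the expression $\alpha_e u''(c)\overline{v(c)} - \alpha_e u'(c)\overline{v'(c)} + \alpha_e u(c)\overline{v''(c)} + \beta_e u(c)\overline{v(c)}$ is exactly $\langle B_e\,(u(c),u'(c),u''(c))^T,\ (v(c),v'(c),v''(c))^T\rangle_{\mathbb{C}^3}$ with $B_e = \left(\begin{smallmatrix}\beta_e & 0 & \alpha_e\\ 0 & -\alpha_e & 0\\ \alpha_e & 0 & 0\end{smallmatrix}\right)$ — one just reads off the matrix entries from the bilinear form in $(u,u',u'')$ versus $(v,v',v'')$. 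This establishes the edge-wise identity.

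Next I would sum over all edges. Recall $A_0^* = -A_0$, so $[A_0^*\mathbf U,\mathbf V] + [\mathbf U, A_0^*\mathbf V] = -[A_0\mathbf U,\mathbf V] - [\mathbf U, A_0\mathbf V]$; writing this out using \eqref{inner} gives $-\sum_e \int_{I_e}\big(\alpha_e(u_e'''\overline{v_e} + u_e\overline{v_e'''}) + \beta_e(u_e'\overline{v_e} + u_e\overline{v_e'})\big)\,dx$. Wait — I should be careful with signs here: the cleanest route is to apply the single-edge identity directly to $[A_0^*\mathbf U,\mathbf V]+[\mathbf U,A_0^*\mathbf V]$, treating the $e_0$ contribution on $[-L,L]$ and each half-line contribution on $[L,\infty)$ in turn. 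For the half-lines, the endpoint at $+\infty$ contributes nothing because $\mathbf U,\mathbf V \in H^3(\mathcal G)$ forces $u_e,u_e',u_e''\to 0$ (Sobolev embedding on $[L,\infty)$), so only the $x=L$ boundary survives, with the sign giving $+(B_j\partial\psi_j(L)\mid\partial\tilde\psi_j(L))$. For $e_0 = [-L,L]$, both endpoints contribute: the $x=L$ endpoint enters with a minus sign (upper limit) relative to... actually the boundary term is $[\,\cdot\,]_{-L}^{L}$, so it is $(\text{value at }L) - (\text{value at }-L)$, and I must track that the overall identity as stated has $+B_0$ at $-L$ and $-B_0$ at $L$; this is just a matter of matching the sign convention coming from $A_0^*=-A_0$ and the direction of the fundamental-theorem-of-calculus evaluation. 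I would verify the signs once carefully and then the claimed formula, including the rewriting in terms of $\partial\phi(\pm L)$, $\partial\psi_j(L)$ and the matrices $B_j$, follows.

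A small technical point worth addressing: strictly, $D(A_0^*) = H^3(\mathcal G)$ rather than $C^\infty$, so the integration by parts should be justified by density — $C_0^\infty$ (or $C^\infty$) functions are dense in $H^3$ on each edge, all the boundary traces $u_e^{(m)}(c)$ for $m=0,1,2$ and the $L^2$ integrals are continuous in the $H^3$ topology, so the identity extends from smooth functions to all of $H^3(\mathcal G)$ by a standard limiting argument.

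The main obstacle is not conceptual — the computation is elementary — but bookkeeping: getting every sign right across the two endpoints of $e_0$, the $A_0^* = -A_0$ relation, and the orientation of each half-line, so that the final matrices come out as the stated $B_j$ with the stated $+/-$ pattern. I expect that to be the only place where care is genuinely needed.
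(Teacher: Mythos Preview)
Your proposal is correct and follows exactly the approach the paper takes: the paper's proof consists of the single sentence ``The proof follows by integration by parts,'' and your write-up supplies precisely those details (the single-edge identity $\tfrac{d}{dx}(u''\bar v - u'\bar v' + u\bar v'') = u'''\bar v + u\bar v'''$, the vanishing at $+\infty$ via $H^3$ decay, and the bookkeeping of signs). Your sign-tracking is right --- the overall minus from $A_0^*=-A_0$ combines with the $[\,\cdot\,]_{-L}^L$ evaluation to give $+B_0$ at $-L$, $-B_0$ at $L$, and $+B_j$ at $L$ on each half-line --- so there is no gap.
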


\begin{proof}
    The proof follows by integration by parts.
\end{proof}

Next,  we define the boundary-subspaces $\mathcal{B}_{\pm}$ as
\begin{equation}\label{B+}
\mathcal{B}_{+}=\Big\{\begin{pmatrix}
                \partial \phi(-L) \\ \partial \psi_{odd}(L)
            \end{pmatrix}: (\phi,\psi_1\dots,\psi_N)\in H^3(\mathcal G)\Big\}
\end{equation}
and 
\begin{equation}\label{B-}
\mathcal{B}_{-}=\Big\{\begin{pmatrix}
                \partial \phi(L) \\ \partial \psi_{even}(L)
            \end{pmatrix}: (\phi,\psi_1\dots,\psi_N)\in H^3(\mathcal G)\Big\}.
\end{equation}
            
In this form, we have the following definition.

\begin{definition}\label{def22}
    Consider the spaces $\mathcal{B}_{+}\cong\mathcal{B}_{-}\cong \mathbb{C}^{3(k+1)}$ in \eqref{B+}-\eqref{B-} and endow these spaces with the non-degenerated indefinite inner products \begin{equation}
        \left( x\mid y\right)_{\mathcal{B}_+}:=\left( x\mid y\right)_{+}=\left( B_+ x\mid y\right)\mbox{ and  }\ \left( x\mid y\right)_{\mathcal{B}_-}:=\left( x\mid y\right)_-:=\left( B_- x\mid y\right),
    \end{equation}
    where $ (\cdot\mid\cdot)$ represents the standard inner product in $\mathbb C^{3(k+1)}$ and 
    
    \begin{equation*}
        B_+:=\operatorname{diag}(B_0,B_1,B_3,\dots,B_{2k-1})\ \mbox{ and  }\ 
            B_-:=\operatorname{diag}(B_0,-B_2,-B_4,\dots,-B_{2k}).
    \end{equation*}
   $ (\mathcal{B}_{\pm}, \left( \cdot\mid \cdot\right)_{\pm})$  are Krein spaces.

\end{definition}

Under these new notations, Proposition \ref{L1} can be rewritten in a compact form as 
\begin{equation}\label{E225}
    [A_0^*\textbf{U},\textbf{V}]+[\textbf{U},A_0^*\textbf{V}]=\left( \begin{pmatrix}
        \partial \phi(-L)\\\partial \psi_{odd}(L)
    \end{pmatrix}\mid \begin{pmatrix}
        \partial \tilde{\phi}(-L)\\\partial \tilde{\psi}_{odd}(L)
    \end{pmatrix} \right)_+ - \left( \begin{pmatrix}
        \partial \phi(L)\\\partial \psi_{even}(L)
    \end{pmatrix}\mid \begin{pmatrix}
        \partial \tilde{\phi}(L)\\\partial \tilde{\psi}_{even}(L)
    \end{pmatrix} \right)_-.
\end{equation}
\begin{definition}\label{w-sesqui}
   We define the function
    \begin{equation}
        \begin{split}
            F:& \ G(A_0^*)\to \mathcal{B}_{+}\oplus\mathcal{B}_{-}\\ &(\textbf{U},A_0^*\textbf{U})\mapsto \left(\begin{pmatrix}
                \partial \phi(-L) \\ \partial \psi_{odd}(L)
            \end{pmatrix},  \begin{pmatrix}
                \partial \phi(L) \\\partial \psi_{even}(L)
            \end{pmatrix}\right)
        \end{split}
    \end{equation}
    and the sesquilinear form 
    \begin{equation}
        \begin{split}
            w:&\ (\mathcal{B}_{+}\oplus \mathcal{B}_{-})\times (\mathcal{B}_{+}\oplus \mathcal{B}_{-})\to \C\\ &((x,y),(\tilde{x}, \tilde{y}))\mapsto \left( x\mid \tilde{x}\right)_{+}-\left( y\mid \tilde{y}\right)_{-}.
        \end{split}
    \end{equation}
\end{definition}

Note from \eqref{omega} and \eqref{boundaryf} that
\begin{equation}\label{E1}
    \Omega((\textbf{U},A_0^*\textbf{U}),(\textbf{V},A_0^*\textbf{V}))=w(F(\textbf{U},A_0^*\textbf{U}),F(\textbf{V},A_0^*\textbf{V})).
\end{equation}

By \cite{Schu2015} we can now characterize skew-self-adjoint extensions $ A$ of
$A_0$ - {\it i.e.}, skew-self-adjoint restrictions of $A_0^*$ Ð and therefore self-adjoint extensions of $iA_0$. Note that obviously $G(A)=\{(x, Ax): x\in D(A)\}\subset G(A_0^*)$

\begin{proposition}\label{prop2.2}
    An extension $A$ of $A_0$ is skew-self-adjoint if and only if there exists a $w$-self-orthogonal subspace $X\subset \mathcal{B}_{+}\oplus\mathcal{B}_{-}$ such that $G(A)=F^{-1}(X)$.
\end{proposition}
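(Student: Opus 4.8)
The statement is the standard abstract correspondence between skew-self-adjoint restrictions of $A_0^*$ and $w$-self-orthogonal subspaces of the boundary space, adapted from Section 2 of \cite{Schu2015}; the plan is to check that the hypotheses of that abstract machinery are met in our concrete setting and then quote it, or equivalently to reproduce the short argument directly. First I would record the two structural facts already established: by Proposition \ref{L1} and equation \eqref{E1}, the form $\Omega$ on $G(A_0^*)$ is the pullback under $F$ of the sesquilinear form $w$ on $\mathcal{B}_+\oplus\mathcal{B}_-$; and $F$ is surjective onto $\mathcal{B}_+\oplus\mathcal{B}_-$ with kernel $\{(\mathbf U, A_0^*\mathbf U): \vec{\mathbf U}=\vec{\mathbf U}'=\vec{\mathbf U}''=\mathbf 0\}$, i.e. $\ker F = G(\overline{A_0})$, since a function in $H^3(\mathcal G)$ with all boundary data vanishing at the vertex (and at the terminal point $-L$) lies in the domain of the closure of $A_0$, and conversely. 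The surjectivity of $F$ is clear because one can prescribe arbitrary triples of boundary values $(\phi^{(i)}(\pm L),\psi_j^{(i)}(L))$ by choosing suitable $H^3$ functions (e.g. polynomials times cutoffs) on each edge.

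Next I would translate "$A$ skew-self-adjoint restriction of $A_0^*$" into a condition on $G(A)\subset G(A_0^*)$. An extension $A$ of $A_0$ with $A\subset A_0^*$ is skew-symmetric iff $\Omega$ vanishes on $G(A)\times G(A)$, i.e. $G(A)\subset G(A)^{\perp_\Omega}$ (where $\perp_\Omega$ is taken inside $G(A_0^*)$); and, using that $(A_0^*)^*=\overline{A_0}=-A_0^*{}^{*}$ together with von Neumann theory — or more directly the elementary fact that for $A_0\subset A\subset A_0^*$ one has $A^*$ is the restriction of $A_0^*$ to $G(A)^{\perp_\Omega}$ — skew-self-adjointness $A^*=-A$ is equivalent to $G(A)=G(A)^{\perp_\Omega}$, i.e. $G(A)$ is $\Omega$-self-orthogonal in $G(A_0^*)$. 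Since $\ker F=G(\overline{A_0})\subset G(A)$ for every such $A$ (because $A\supset A_0$ is closed, or one passes to the closure), $G(A)$ is a union of $F$-fibres, hence $G(A)=F^{-1}(F(G(A)))$; set $X:=F(G(A))$.

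It then remains to see that $G(A)=F^{-1}(X)$ is $\Omega$-self-orthogonal in $G(A_0^*)$ if and only if $X$ is $w$-self-orthogonal in $\mathcal{B}_+\oplus\mathcal{B}_-$. This is a formal consequence of \eqref{E1} plus surjectivity of $F$: for $\xi\in X$ pick $g\in F^{-1}(\xi)$; then $w(\xi,\eta)=0$ for all $\eta\in X$ iff $\Omega(g,h)=0$ for all $h\in F^{-1}(X)=G(A)$ (since $F$ maps $G(A)$ onto $X$), and because $\Omega$ is constant on $F$-fibres in each slot this is equivalent to $g\in G(A)^{\perp_\Omega}$; ranging over $\xi$ gives $X=X^{\perp_w}\iff G(A)=G(A)^{\perp_\Omega}$. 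Conversely, given any $w$-self-orthogonal $X$, the subspace $G(A):=F^{-1}(X)$ contains $\ker F=G(\overline{A_0})$, is the graph of an operator $A$ with $\overline{A_0}\subset A\subset A_0^*$ (it is a graph because $\ker F$ already is), and the same equivalence shows $A$ is skew-self-adjoint.

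\textbf{Main obstacle.} The only genuinely nontrivial points — and the ones I would be most careful about — are the two "bookkeeping" facts underlying the reduction: that $\ker F$ equals the graph of $\overline{A_0}$ (so that self-adjointness can be read off at the level of boundary data), and that for intermediate operators $A_0\subset A\subset A_0^*$ the adjoint is exactly the $\Omega$-orthogonal restriction of $A_0^*$. Both are standard in the Krein/von Neumann boundary-triplet framework and in \cite{Schu2015}, but they rely on $D(A_0^*)=H^3(\mathcal G)$ (already noted in the text) and on identifying functions with vanishing vertex data as belonging to $D(\overline{A_0})$; the latter uses that $C_0^\infty$ is dense in the appropriate sense on each edge, with the terminal vertex $-L$ treated exactly like an endpoint of a half-line. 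Everything else is the purely algebraic transport of the self-orthogonality condition through the surjection $F$, which is immediate from \eqref{E1}.
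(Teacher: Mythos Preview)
Your proposal is correct and takes essentially the same approach as the paper: the paper's proof simply invokes \eqref{E225}, \eqref{E1} and the abstract results of \cite{Schu2015}, while you unpack that machinery explicitly (surjectivity of $F$, $\ker F=G(\overline{A_0})$, and the transport of self-orthogonality through $F$). Your careful identification of the two ``bookkeeping'' facts is exactly what underlies the one-line citation in the paper.
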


\begin{proof}
It follows immediately from Definitions \ref{orto} and \ref{w-sesqui}, relations \eqref{E225} and \eqref{E1}, and from \cite{Schu2015}.
\end{proof}

Hence, $w$-self-orthogonal subspaces $X$ parametrize the skew-self-adjoint extensions $A$ of $A_0$. A more explicit
description of these objects is given next.

\subsubsection{Extensions generating unitary dynamics}

\begin{theorem}
    \label{char}Let $X\subset \mathcal{B}_+\oplus\mathcal{B}_-$. $X$ is $w$-self-orthogonal if and only if there is a $(\mathcal{B}_+,\mathcal{B}_-)$-unitary operator $\mathcal{L}$ such that $X=G(\mathcal{L})$. 
\end{theorem}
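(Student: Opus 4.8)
The plan is to prove the two implications separately, exploiting the structure of $w$ as a difference of the two Krein inner products $(\cdot\mid\cdot)_+$ and $(\cdot\mid\cdot)_-$.

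\textbf{From graph of a unitary operator to $w$-self-orthogonality.} Suppose $\mathcal{L}\colon\mathcal{B}_+\to\mathcal{B}_-$ is $(\mathcal{B}_+,\mathcal{B}_-)$-unitary and set $X=G(\mathcal{L})=\{(x,\mathcal{L}x):x\in D(\mathcal{L})\}$. First I would check $X\subset X^{\perp_w}$: for $x,\tilde x\in D(\mathcal{L})$,
\[
w((x,\mathcal{L}x),(\tilde x,\mathcal{L}\tilde x))=(x\mid\tilde x)_+-(\mathcal{L}x\mid\mathcal{L}\tilde x)_- = (x\mid\tilde x)_+-(x\mid\mathcal{L}^{\#}\mathcal{L}\tilde x)_+ ,
\]
and since $\mathcal{L}$ unitary means $\mathcal{L}^{\#}=\mathcal{L}^{-1}$, we get $\mathcal{L}^{\#}\mathcal{L}\tilde x=\tilde x$, hence the expression vanishes. (A small point: $D(\mathcal{L})$ is dense but possibly not all of $\mathcal{B}_+$; because $\mathcal{B}_+$ is finite-dimensional and the $+$-inner product is nondegenerate, a densely defined unitary is in fact everywhere defined — I would note this so that $X$ is genuinely a graph over all of $\mathcal{B}_+$.) For the reverse inclusion $X^{\perp_w}\subset X$, take $(u,v)$ with $w((u,v),(x,\mathcal{L}x))=0$ for all $x\in D(\mathcal{L})$, i.e. $(u\mid x)_+=(v\mid\mathcal{L}x)_-$ for all such $x$. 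The right side equals $(\mathcal{L}^{\#}v\mid x)_+$ by definition of the adjoint (here I use that $v\in\mathcal{B}_-=R(\mathcal{L})$ since $\mathcal{L}$ is onto, so $v\in D(\mathcal{L}^{\#})$), so $(u-\mathcal{L}^{\#}v\mid x)_+=0$ for all $x$ in a dense set; nondegeneracy of $(\cdot\mid\cdot)_+$ forces $u=\mathcal{L}^{\#}v=\mathcal{L}^{-1}v$, i.e. $v=\mathcal{L}u$ and $(u,v)\in X$.

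\textbf{From $w$-self-orthogonality to graph of a unitary.} Conversely, let $X=X^{\perp_w}$. The main work is to show $X$ is the graph of a well-defined single-valued operator, i.e. that the projection $\pi_+\colon X\to\mathcal{B}_+$ is injective with full range. For injectivity, suppose $(0,v)\in X$; then for every $(\tilde x,\tilde y)\in X$ we have $0=w((0,v),(\tilde x,\tilde y))=-(v\mid\tilde y)_-$, so $v$ is $(\cdot\mid\cdot)_-$-orthogonal to $\pi_-(X)$. If I can show $\pi_-(X)=\mathcal{B}_-$, nondegeneracy gives $v=0$. The surjectivity/range statements I expect to get from a dimension count: the self-orthogonality condition $X=X^{\perp_w}$ on a space with nondegenerate sesquilinear form of signature type forces $\dim X=\tfrac12\dim(\mathcal{B}_+\oplus\mathcal{B}_-)=3(k+1)$, which is exactly $\dim\mathcal{B}_+=\dim\mathcal{B}_-$; combined with injectivity of $\pi_+$ this gives $\pi_+(X)=\mathcal{B}_+$, and symmetrically $\pi_-(X)=\mathcal{B}_-$. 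So I should first establish $\pi_-(X)=\mathcal{B}_-$ (equivalently $\ker\pi_+\cap X=0$ together with the dimension equality — this is a bit circular, so the clean route is: $X^{\perp_w}=X$ is automatically ``maximal isotropic'' for $w$, and $w$ here is the orthogonal difference of two nondegenerate forms of equal rank, hence neutral; a maximal neutral subspace of such a form has dimension $3(k+1)$, and moreover meets neither summand $\mathcal{B}_+\times\{0\}$ nor $\{0\}\times\mathcal{B}_-$ trivially in a way that lets both projections be bijections). Once $X=G(\mathcal{L})$ with $\mathcal{L}\colon\mathcal{B}_+\to\mathcal{B}_-$ linear and bijective, the isotropy condition $w((x,\mathcal{L}x),(\tilde x,\mathcal{L}\tilde x))=0$ reads $(x\mid\tilde x)_+=(\mathcal{L}x\mid\mathcal{L}\tilde x)_-$ for all $x,\tilde x$, which is exactly the statement that $\mathcal{L}^{\#}\mathcal{L}=\mathrm{id}$; with $\mathcal{L}$ bijective this gives $\mathcal{L}^{\#}=\mathcal{L}^{-1}$, i.e. $\mathcal{L}$ is $(\mathcal{B}_+,\mathcal{B}_-)$-unitary.

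\textbf{Main obstacle.} The essentially linear-algebraic but slightly delicate point is the dimension/maximality argument in the converse direction: extracting from $X=X^{\perp_w}$ alone that $X$ is the graph of an \emph{everywhere-defined, bijective} operator, rather than merely a linear relation. I would handle this by invoking the general fact (which holds because $w$ is the ``$+/-$'' orthogonal sum of two nondegenerate Hermitian forms of the same rank $3(k+1)$, making $w$ nondegenerate of signature $(3(k+1),3(k+1))$) that a subspace equal to its own $w$-orthogonal complement is maximal $w$-neutral and hence of dimension exactly $3(k+1)$; then neutrality of $X$ forces $\pi_+|_X$ to be injective (an element of $\ker\pi_+|_X$ lies in the radical of $(\cdot\mid\cdot)_-$ restricted to $\pi_-(X)$, but one first shows $\pi_-(X)=\mathcal{B}_-$ by the symmetric argument, contradicting nondegeneracy unless it is $0$), and the dimension count then upgrades injectivity to bijectivity of both projections. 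This is precisely the finite-dimensional specialization of the abstract correspondence in \cite{Schu2015}, so I would either cite it or reproduce this short count.
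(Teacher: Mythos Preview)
Your forward direction (unitary $\Rightarrow$ $w$-self-orthogonal) is fine and matches the paper's argument.

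For the converse you have correctly isolated the real difficulty: showing that a $w$-self-orthogonal $X$ is the graph of an operator, i.e.\ that $(0,v)\in X$ forces $v=0$. Neither your dimension/signature route nor the paper's own argument actually closes this. The paper asserts that $(y\mid y)_-=0$ together with nondegeneracy of $(\cdot\mid\cdot)_-$ gives $y=0$; but nondegeneracy only says that $(y\mid z)_-=0$ for \emph{all} $z$ implies $y=0$, and an indefinite form always admits nonzero isotropic vectors. Your dimension count is honest about its circularity, but the ``clean route'' you sketch---that a maximal $w$-neutral subspace cannot meet $\{0\}\times\mathcal{B}_-$ nontrivially---is false in general. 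Take $\mathcal{B}_+=\mathcal{B}_-=\mathbb{C}^2$ with the form given by $\mathrm{diag}(1,-1)$ and set
\[
X=\mathrm{span}\bigl\{((1,1),(0,0)),\ ((0,0),(1,1))\bigr\}.
\]
Then $w$ is represented by $\mathrm{diag}(1,-1,-1,1)$; one checks that $X$ is $w$-isotropic of dimension $2=\tfrac12\dim(\mathcal{B}_+\oplus\mathcal{B}_-)$, hence $X=X^{\perp_w}$, yet $X$ contains $((0,0),(1,1))$ and is not the graph of any operator. Since each block $B_j$ in the paper is itself indefinite (signature $(1,2)$ or $(2,1)$), the same obstruction is present in the actual spaces $\mathcal{B}_\pm$: any nonzero $(\cdot\mid\cdot)_-$-isotropic $y_0$ yields a $w$-isotropic element $(0,y_0)$, which extends to a $w$-self-orthogonal subspace that is not a graph.

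So the gap you flagged is genuine and is shared by the paper's proof. The statement as written, for arbitrary Krein spaces $\mathcal{B}_\pm$, does not hold; the general correspondence in the spirit of \cite{Schu2015} should be phrased in terms of unitary linear \emph{relations}, or else an additional hypothesis on $X$ (or on $B_\pm$) is needed to ensure that the projections $\pi_\pm|_X$ are injective.
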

\begin{proof}
We follow the same ideas of \cite{MNS}. For sake of completeness we carry on with the details.  Let $X$ be $w$-self-orthogonal and let $(x,y)\in X$. By definition we have $w((x,y),(x,y))=0$, that is, \begin{equation}
    \label{eq1}(x\mid x)_{+}=(y\mid y)_{-}.
\end{equation} Since $X$ is a subspace, by setting $x=0$ we get $0=(y\mid y)_-$. Since $(\cdot\mid\cdot)_-$ is non-degenerated it must be the case $y=0$. By linearity we can conclude that $X$ is the graph of a linear operator $\mathcal{L}:\mathcal{B}_+\to\mathcal{B}_-$. Also note that if $y=0$ it also implies $x=0$; which means $\mathcal{L}$ is injective. 

To see $\mathcal{L}$ is densely defined, let $z\in D(\mathcal{L})^\perp$ and $x\in D(\mathcal{L})$. We have
\begin{equation}
    0=(z\mid x)=(B_+B_+^{-1}z\mid x)=(B_+^{-1}z\mid x)_+=w((B_+^{-1}z, 0), (x,\mathcal{L}x) ).
\end{equation}

Since $x\in D(\mathcal{L})$ is arbitrary, we conclude $(B_+^{-1}z,0)\in X^{\perp_w}=X=G(\mathcal{L})$. Since $\mathcal{L}$ is injective, it must be the case $B^{-1}_+z=0$. The latter implies $z=0$. A symmetric argument shows the image $R(\mathcal{L})$ is dense. 

Let us now see $\mathcal{L}^\#=L^{-1}$. First note for $x\in D(\mathcal{L})$ and $y\in R(\mathcal{L})$, from \eqref{eq1}, we have $$(\mathcal{L}x\mid y)_-=(x\mid \mathcal{L}^{-1}y)_+.$$ The latter implies $R(\mathcal{L})\subset D(\mathcal{L}^\#)$ with $\mathcal{L}^\#y=\mathcal{L}^{-1}y$; \textit{i.e.}, $\mathcal{L}^{-1}\subset \mathcal{L}^\#$. On the other hand, if $y\in D(\mathcal{L}^\#)$ with $\mathcal{L}^\#y=x$ we have 
$$(\mathcal{L}z \mid y)_{-}=(z \mid x )_+\ \ \mbox{for all }z\in D(\mathcal{L}).$$
Note the latter is equivalent to say $$w((z,\mathcal{L}z),(x,y))=0\ \mbox{ for all }z\in D(\mathcal{L}),$$
which implies $(x,y)\in X^{\perp_w}=X=G(\mathcal{L})$. Thus, $(y,x)\in G(\mathcal{L}^{-1})$. We conclude $\mathcal{L}^{-1}=\mathcal{L}^\#$; proving $\mathcal{L}$ is unitary.

Conversely, let $\mathcal{L}$ be a $(\mathcal{B}_+,\mathcal{B}_-)$-unitary operator with $X=G(\mathcal{L})$. Let $x\in D(\mathcal{L})$. For any $z\in D(\mathcal{L})$ we have $(\mathcal{L}x\mid\mathcal{L}z)_-=(x\mid z)_+$. That is, $w((x,\mathcal{L}x),(z,\mathcal{L}z))=0$ for any $z\in D(\mathcal{L})$. We conclude $(x,\mathcal{L}x)\in X^{\perp_w}$, \textit{i.e.}, $X\subset X^{\perp_w}$. On the other hand, if $(x,y)\in X^{\perp_w}$ we have $w((x,y),(z,\mathcal{L}z))=0$ for all $z\in D(\mathcal{L})$, \textit{i.e.}, $(\mathcal{L}z\mid y)_-=(z\mid x)_+$ for all $z\in D(\mathcal{L})$. By definition of $\mathcal{L}^\#$ the latter implies $y\in D(\mathcal{L}^\#)=D(\mathcal{L}^{-1})$ with $\mathcal{L}^{-1}y=\mathcal{L}^\#y=x$. We conclude $(x,y)\in G(\mathcal{L})=X$. Hence $X=X^{\perp_w}$ as desired. 
\end{proof}
\subsubsection{Examples}

With the first example below we intend to resemble the so-called $\delta$ \textit{interactions} on a looping-edge graph $\mathcal{G}$ in a similar way as done in the case of metric star-shaped graphs (see, for instance, Theorem 3.6 in \cite{AC}). 

\begin{example}[$\delta$-type interactions on a looping-edge graph]\label{ex1}
    Let us first consider $k=1$, \textit{i.e.}, two half-lines attached to a common vertex. Assume $-\alpha_2=\alpha_1=\alpha_0$ and $-\beta_2=\beta_1=\beta_0$. In particular,
    \begin{equation}
    A_0 =\left(\alpha_0\partial_x^3+\beta_0\partial_x,\  \alpha_0\partial_x^3+\beta_0\partial_x,  \  -\alpha_0\partial_x^3-\beta_0\partial_x\right),
\end{equation} 
 and $B_+=B_-$ in Definition \ref{def22}.

For parameters $z,m\in \R$ consider the operator
\begin{equation}
\mathcal{L}:=\begin{pmatrix}
        1&0&0&0&0&0\\
        z&1&0&0&0&0\\
        \frac{z^2}{2}&z&1&m&0&0\\
        0&0&0&1&0&0\\
        0&0&0&z&1&0\\
        -m&0&0&\frac{z^2}{2}&z&1
    \end{pmatrix}.
\end{equation}
Since $\mathcal{L}^\#=B_+^{-1}\mathcal{L}^*B_-=\mathcal{L}^{-1}$, it can be seen that $\mathcal{L}$ defines a $(\mathcal{B}_+,\mathcal{B}_-)$-unitary operator. According to Proposition \ref{prop2.2} and Theorem \ref{char}, $\mathcal{L}$ defines a skew-self-adjoint extension $A_\mathcal{L}$ of $A_0$ defined via the relation $F(G(A_\mathcal{L}))=G(\mathcal{L})$. That is, an extension with domain $D(A_\mathcal{L})$ consisting of the functions $\textbf{V}=(\phi, \psi_1, \psi_{2})\in D(A_0^*)$ such that
\begin{equation}\label{ddom}
    \begin{split}
       & \phi(L)=\phi(-L),\ \psi_1(L)=\psi_2(L),\ \phi'(L)-\phi'(-L)=z\phi(-L),\ \\& \psi'_2(L)-\psi'_1(L)=z\psi_1(L),\ \phi''(L)-\phi''(-L)=\frac{z^2}{2}\phi(-L)+z\phi'(-L)+m\psi_1(L)\\& \mbox{and }\psi_2''(L)-\psi_1''(L)=\frac{z^2}{2}\psi_1(L)+z\psi_1'(L)-m\phi(-L).
    \end{split}
\end{equation}
Note in particular that if $\textbf{V}\in D(A_{\mathcal{L}})$ is so that $\phi(-L)=\psi_1(L)$ we may conclude from \eqref{ddom} that \begin{equation}
    \label{ddomcont}
    \begin{split}
        &\phi(-L)=\phi(L)=\psi_1(L)=\psi_2(L), \\& \left(\phi'(L)+\psi'_2(L) \right)-\left(\phi'(-L)+\psi'_1(L)\right)=2z\phi(-L).
    \end{split}
\end{equation}

    

More generally, consider $k\in \Z^+$ arbitrary (\textit{i.e.} $2k$ half-lines attached to the common vertex $\nu=L$) and a parameter $z\in \R$. Under the analogous assumption $-\alpha_{2j}=\alpha_{2j-1}=\alpha_0$, $-\beta_{2j}=\beta_{2j-1}=\beta_0$ for all $j\in\{1,\dots,k\}$, so we obtain in particular for $k=2$ 
 \begin{equation}
    A_0 =\left(\alpha_0\partial_x^3+\beta_0\partial_x,\  \alpha_0\partial_x^3+\beta_0\partial_x,  \  -\alpha_0\partial_x^3-\beta_0\partial_x,\ \alpha_0\partial_x^3+\beta_0\partial_x,\  - \alpha_0\partial_x^3-\beta_0\partial_x \right).
\end{equation} 
Define the following operator with diagonal matrix of blocks 
\begin{equation}
    \mathcal{L_{\delta_Z}}:=\operatorname{diag}(\delta_z,\delta_z,\dots,\delta_z)\ \mbox{ where }\ \delta_z:=\begin{pmatrix}
        1&0&0\\ z&1&0\\ \frac{z^2}{2}&z&1
    \end{pmatrix}.
\end{equation}
Note that each block $\delta_{z}$ is such that $B_0^{-1}\delta_z^*B_0=\delta_z^{-1}$ for any $z\in \R$ ($B_0$ in \eqref{boundaryf}) and since $\mathcal{L}_{\delta_z}^\#$ is a diagonal matrix of blocks of the form $B_0^{-1}\delta_z^*B_0$ it can be seen that $\mathcal{L}_{\delta_z}^\#=\mathcal{L}_{\delta_z}^{-1}$. From there it is easy to see that $\mathcal{L}_{\delta_z}$ is a $(\mathcal{B}_+,\mathcal{B}_-)$-unitary operator. From Proposition \ref{prop2.2} and Theorem \ref{char}, $\mathcal{L}_{\delta_z}$  defines a skew-self-adjoint extension $A_{\mathcal{L}_{\delta_z}}$ of $A_0$ with domain 
\begin{equation}\label{set1}
    D_{\delta_z}:=\left\{ \textbf{V}\in H^3(\mathcal{G})\ \mid \delta_z \partial\phi(-L)=\partial \phi(L) \mbox{ and } \delta_z \partial \psi_{2j-1}(L)=\partial\psi_{2j}(L),\ j=1,\dots,k\right\}.
\end{equation}
Note from \eqref{set1} we obtained "pair-wise" the conditions in \eqref{ddom}. More precisely, abusing notation to identify $\psi_{-1}$ with $\phi$ at $-L$ and $\psi_{0}$ with $\phi$ at $L$ we have for any $j=0,\dots,k$ that
\begin{equation}
\begin{split}
     &\psi_{2j}(L)=\psi_{2j-1}(L),\ \psi_{2j}'(L)-\psi_{2j-1}'(L)=z\psi_{2j}(L) \mbox{ and}\\& \psi_{2j}''(L)-\psi_{2j-1}''(L)=z\psi_{2j-1}'(L)+\frac{z^2}{2}\psi_{2j-1}(L).
\end{split}
\end{equation}

Moreover, if in particular $\textbf{V}\in D_{\delta_z}$ is such that $\phi(-L)=\psi_{2j-1}(L)$ for all $j=1,\dots, k$ then we conclude from \eqref{set1} that 
\begin{equation}
    \phi(-L)=\phi(L)=\psi_1(L)=\cdots=\psi_{2k}(L) \mbox{ and } \sum_{j=0}^k \psi'_{2j-1}(L) - \sum_{j=0}^k\psi'_{2j}(L)=(1+k)z\phi(-L).
\end{equation}   
\end{example}

For the following examples we assume $|E_+|=|E_-|=1$. From the deficiency indices in Section \ref{defairy} and the Von Neumann and Krein theory we know the skew-self-adjoint extensions of $A_0$ are parametrized by $6^2=36$ parameters. We  exhibit a $2$ and a $4$-parameter independent families of unitary operators that maintain the looping-edge structure (\textit{i.e.} the loop condition $\phi(-L)=\phi(L)$) and are not necessarily interacting pair-wise.  

\begin{example}
    Consider again $k=1$ and $B_+=B_-$.
It can be seen for $m_1,m_2\in \R$ that \begin{equation}
 \mathcal{L}=\begin{pmatrix}
         1&0&0&0&0&0\\
           0&0&0&m_1&1&0\\
           0&0&1&m_2&0&0\\
           0&0&0&1&0&0\\
           0&1&0&0&0&0\\
          -m_2&0&0&\frac{m_1^2}{2}&m_1&1
       \end{pmatrix}
   \end{equation} is a unitary operator and therefore that it defines a skew-self-adjoint extension $A_\mathcal{L}$ with domain $D(A_\mathcal{L})$ made of the functions $v=(\phi, \psi_1, \psi_{2})\in D(A_0^*)$ such that
\begin{equation*}
\begin{split}
&\phi(-L)=\phi(L),\  \psi_1(L)=\psi_2(L),\  \phi'(-L)=\psi'_2(L),\ \phi'(L)-\psi'_1(L)=m_1\psi_1(L),\\& \phi''(L)-\phi''(-L)=m_2\psi_1(L)\ \mbox{and }\psi_2''(L)-\psi_2''(L)=\frac{m_1^2}{2}\psi_1(L)+m_1\psi_1'(L)-m_2\phi(-L).
\end{split}
\end{equation*}
\end{example}

\begin{example}
Consider again $k=1$, $-\alpha_2=\alpha_1=\alpha_0$ and $-\beta_2=\beta_1=\beta_0$. It can be seen for $m_1,\dots,m_4\in \R$ that 
\begin{equation}
    \mathcal{L}=\begin{pmatrix}
        1&0&0&0&0&0\\
        m_1&0&0&m_2&1&0\\
        \frac{m_1^2+1}{2}&1&1&m_3&m_1&0\\
        0&0&0&1&0&0\\
        1&1&0&m_4&0&0\\
        m_4-m_3+m_1m_2&m_4&0&\frac{m_2^2+m_4^2}{2}&m_2&1
    \end{pmatrix}
\end{equation}
defines a skew-self-adjoint extension $A_\mathcal{L}$ with domain $D(A_\mathcal{L})$ made of $v\in D(A_0^*)$ such that \begin{equation*}
    \begin{split}
        &\phi(L)=\phi(-L),\ \psi_1(L)=\psi_2(L),\ \phi'(L)-\psi_1'(L)=m_1\phi(-L)+m_2\psi_1(L), \\& \psi_2'(L)-\phi'(-L)=m_4\psi_1(L)+\phi(-L),\\&        \phi''(L)-\phi''(-L)=\frac{m_1^2+1}{2}\phi(-L)+m_3\psi_1(L)+\phi'(-L)+m_1\psi_1'(L), \mbox{and}\\& \psi_2''(L)-\psi_1''(L)=(m_4-m_3+m_1m_2)\phi(-L)+\frac{m_2^2+m_4^2}{2}\psi_1(L)+m_4\phi'(-L)+m_2\psi_1'(L). 
    \end{split}
\end{equation*}
\end{example}
\subsection{Boundary system for arbitrary amount of half-lines}

For this part we no longer seek unitary dynamics. We allow the dynamics to be
generated by a $C_0$-semigroup of contractions. We consider $\mathcal{G}$
consisting of a finite amount $N$ of half-lines attached to the vertex $L$,
with no restrictions on the signs of the coefficients of the Hamiltonian. We
still virtually treat $-L$ as a terminal vertex.

\begin{definition}
Consider the spaces $\mathcal{C}_1\cong\mathcal{C}_0\cong\mathbb{C}^{3(N+1)}$
equipped with the non-degenerate indefinite inner products
\begin{equation}
    (x\mid y)_1:=(C_1\,x\mid y),\qquad (x\mid y)_0:=(C_0\,x\mid y),
\end{equation}
where
\begin{equation*}
    C_1:=\operatorname{diag}(B_0,B_1,B_2,\dots,B_N),\qquad
    C_0:=\frac{1}{N+1}\operatorname{diag}(B_0,B_0,\dots,B_0).
\end{equation*}
Both $(\mathcal{C}_1,(\cdot\mid\cdot)_1)$ and $(\mathcal{C}_0,(\cdot\mid\cdot)_0)$
are Krein spaces.
\end{definition}

\subsubsection{Boundary trace maps and the Green identity}

We introduce boundary trace maps $\Gamma_1, \Gamma_0: D(A_0^*)\to\mathbb{C}^{3(N+1)}$ by
\begin{equation}\label{eq:traces}
    \Gamma_1\mathbf{U}:=
    \begin{pmatrix}
        \partial\phi(-L)\\[2pt]\partial\psi_1(L)\\\vdots\\\partial\psi_N(L)
    \end{pmatrix}\in\mathcal{C}_1,
    \qquad
    \Gamma_0\mathbf{U}:=
    \begin{pmatrix}
        \partial\phi(L)\\\partial\phi(L)\\\vdots\\\partial\phi(L)
    \end{pmatrix}\in\mathcal{C}_0.
\end{equation}
In terms of these maps, the Green identity for $A_0^*$ reads
\begin{equation}\label{eq:GI}
    [A_0^*\mathbf{U},\mathbf{V}]+[\mathbf{U},A_0^*\mathbf{V}]
    =(\Gamma_1\mathbf{U}\mid\Gamma_1\mathbf{V})_1 - (\Gamma_0\mathbf{U}\mid\Gamma_0\mathbf{V})_0.
\end{equation}

\subsubsection{Boundary relations, Krein adjoints, and contractive extensions}

\begin{definition}\label{def:BR}
A \emph{boundary relation} is a linear subspace $R\subset\mathcal{C}_1\times\mathcal{C}_0$.
Its \emph{Krein adjoint} $R^{[*]}\subset\mathcal{C}_0\times\mathcal{C}_1$ is defined by
\begin{equation}\label{eq:Kadj}
    (y,x)\in R^{[*]}
    \;\Longleftrightarrow\;
    (y_1\mid y)_0=(x_1\mid x)_1
    \quad\text{for all }(x_1,y_1)\in R.
\end{equation}
The relation $R$ is called a \emph{$(\mathcal{C}_1,\mathcal{C}_0)$-contraction} if
\begin{equation}\label{eq:Rcontr}
    (y\mid y)_0\le(x\mid x)_1
    \qquad\text{for all }(x,y)\in R.
\end{equation}
When $R=\operatorname{graph}(\mathcal{L})$ for a linear map
$\mathcal{L}:\mathcal{C}_1\to\mathcal{C}_0$, the Krein adjoint satisfies
$R^{[*]}=\operatorname{graph}(\mathcal{L}^\#)$ with
\begin{equation}\label{eq:Lsharp}
    \mathcal{L}^\#:=C_1^{-1}\mathcal{L}^*C_0,
\end{equation}
and contractivity of $R$ reduces to $\mathcal{L}^*C_0\mathcal{L}-C_1\preceq 0$.
\end{definition}

\begin{remark}\label{rem:Dirichlet}
The \emph{multivalued part} of $R$ is
$\operatorname{mul}(R):=\{y:(0,y)\in R\}$.
Whenever $\operatorname{mul}(R)\ne\{0\}$, the relation $R$ is not the graph of
any operator, and no matrix $\mathcal{L}$ can encode the corresponding
boundary conditions. Concretely, $(0,y)\in R$ imposes
$\Gamma_1\mathbf{U}=0$, i.e.
\[
    \phi(-L)=\phi'(-L)=\phi''(-L)=0,\quad
    \psi_j(L)=\psi_j'(L)=\psi_j''(L)=0,\quad j=1,\dots,N,
\]
while leaving $\Gamma_0\mathbf{U}$ unconstrained. These are
\emph{Dirichlet-type} conditions at the incoming edges; they are valid
generators of contraction semigroups whenever the corresponding relation is
contractive, but are inaccessible to the operator-based framework.
\end{remark}

\begin{definition}\label{def:AR}
Given a boundary relation $R\subset\mathcal{C}_1\times\mathcal{C}_0$, define
\begin{equation}\label{eq:AR}
    A_R\mathbf{U}:=-A_0^*\mathbf{U},\qquad
    D(A_R):=\bigl\{\mathbf{U}\in D(A_0^*):(\Gamma_1\mathbf{U},\Gamma_0\mathbf{U})\in R\bigr\}.
\end{equation}
\end{definition}

Let us describe the adjoint of $A_R$.
\begin{proposition}\label{L3}
$A_R^*=-A_{R^{[*]}}$, where $A_{R^{[*]}}$ acts as $A_0^*$ with domain
\begin{equation}
    D\!\left(A_{R^{[*]}}\right)
    :=\bigl\{\mathbf{V}\in D(A_0^*):(\Gamma_0\mathbf{V},\Gamma_1\mathbf{V})\in R^{[*]}\bigr\}.
\end{equation}
\end{proposition}

\begin{proof}
Let $\mathbf{U}\in D(A_R)$ and $\mathbf{V}\in D(A_0^*)$.
Using \eqref{eq:GI} and $A_R\mathbf{U}=-A_0^*\mathbf{U}$,
\begin{equation}
    [A_R\mathbf{U},\mathbf{V}]
    =[\mathbf{U},A_0^*\mathbf{V}]
    +(\Gamma_0\mathbf{U}\mid\Gamma_0\mathbf{V})_0
    -(\Gamma_1\mathbf{U}\mid\Gamma_1\mathbf{V})_1.
\end{equation}
Hence $\mathbf{V}\in D(A_R^*)$ if and only if
\[
    (y_1\mid\Gamma_0\mathbf{V})_0=(x_1\mid\Gamma_1\mathbf{V})_1
    \quad\forall\,(x_1,y_1)\in R,
\]
which by \eqref{eq:Kadj} is equivalent to
$(\Gamma_0\mathbf{V},\Gamma_1\mathbf{V})\in R^{[*]}$.
In this case $A_R^*\mathbf{V}=A_0^*\mathbf{V}=-(-A_0^*\mathbf{V})=-A_{R^{[*]}}\mathbf{V}$.
\end{proof}

We now obtain dissipativity via contractivity in the following sense:
\begin{proposition}\label{P1}
$A_R$ is dissipative if and only if $R$ is a $(\mathcal{C}_1,\mathcal{C}_0)$-contraction.
\end{proposition}

\begin{proof}
For $\mathbf{U}\in D(A_R)$, set $(x,y):=(\Gamma_1\mathbf{U},\Gamma_0\mathbf{U})\in R$.
The Green identity \eqref{eq:GI} applied to the pair $(\mathbf{U},\mathbf{U})$ gives
\begin{equation}
    2\operatorname{Re}[A_R\mathbf{U},\mathbf{U}]
    =(y\mid y)_0-(x\mid x)_1.
\end{equation}
Hence $\operatorname{Re}[A_R\mathbf{U},\mathbf{U}]\le0$ for all $\mathbf{U}\in D(A_R)$
if and only if $(y\mid y)_0\le(x\mid x)_1$ for all $(x,y)\in R$,
which is exactly \eqref{eq:Rcontr}.
\end{proof}

\begin{remark}\label{rem:auto} We note contractivity of $R$ implies contractivity of $R^{[*]}$ in this finite-
dimensional setting. In fact, by the polar decomposition of an invertible Hermitian matrix we can write $C_0=J_0H_0$ and $C_1=J_1H_1$ with $H_0,H_1$ positive definite and
$J_0,J_1$ fundamental symmetries of the respective Krein spaces. The
contractivity condition \eqref{eq:Rcontr} for $R$, when expressed through the
similarity transforms $H_0^{1/2}$ and $H_1^{1/2}$, becomes a Hilbert-space
contraction condition for an equivalent relation. Its adjoint in the Hilbert
sense is again a contraction, and undoing the similarity transforms shows that
$R^{[*]}$ satisfies \eqref{eq:Rcontr} with the roles of $\mathcal{C}_0$ and
$\mathcal{C}_1$ interchanged. Consequently the
dissipativity of $A_R$ automatically entails the dissipativity of
$A_R^*=-A_{R^{[*]}}$, without any additional hypothesis on $R$.
\end{remark}

\begin{theorem}\label{T1}
Let $R\subset\mathcal{C}_1\times\mathcal{C}_0$ be a linear boundary relation.
Then $A_R$ generates a $C_0$-semigroup of contractions on $L^2(\mathcal{G})$
if and only if $R$ is a $(\mathcal{C}_1,\mathcal{C}_0)$-contraction.
\end{theorem}

\begin{proof}
Suppose $R$ is contractive. By Proposition~\ref{P1}, $A_R$ is dissipative.
By Remark~\ref{rem:auto}, $R^{[*]}$ is also contractive, so Proposition~\ref{P1}
applied to $R^{[*]}$ gives that $A_R^*=-A_{R^{[*]}}$ is dissipative. The
Lumer--Phillips theorem then yields that $A_R$ generates a contraction
semigroup.

Conversely, if $A_R$ generates a contraction semigroup, the Lumer--Phillips
theorem implies $A_R$ is dissipative, and Proposition~\ref{P1} gives that $R$
is contractive.
\end{proof}

\subsubsection{Examples}

We now present extensions $A_R$ generating semigroups of contractions, framed
as boundary relations. The first example corresponds to an operator-type
relation (graph of a matrix), while Remark~\ref{rem:Dirichlet} describes the
additional Dirichlet-type extensions that fall outside the operator framework.

\begin{example}[$\delta$-type interactions on a tadpole graph]\label{ex4}
Let $N=1$, $\alpha_0=\alpha_1=\beta_0=\beta_1$, and let $m_1,\dots,m_4\in\mathbb{R}$.
Define the matrix
\begin{equation}\label{eq:Mmat}
    M:=\begin{pmatrix}
        1 & 0 & 0 & 0 & 0 & 0\\[2pt]
        \dfrac{m_2+m_3}{2} & 1 & 0 & 1 & 0 & 0\\[6pt]
        m_1 & m_2+m_3 & 2 & m_2 & 0 & 0\\[2pt]
        0 & 0 & 0 & 1 & 0 & 0\\[2pt]
        \dfrac{m_2+m_3}{2} & 1 & 0 & 1 & 0 & 0\\[6pt]
        m_3 & 2 & 0 & m_4 & 0 & 2
    \end{pmatrix},
\end{equation}
and consider the boundary relation
$R:=\operatorname{graph}(M)\subset\mathcal{C}_1\times\mathcal{C}_0$,
i.e.\
\begin{equation}
    R = \bigl\{(x,Mx): x\in\mathcal{C}_1\bigr\}.
\end{equation}
By Definition~\ref{def:BR}, $R$ is a $(\mathcal{C}_1,\mathcal{C}_0)$-contraction
if and only if the matrix $M^*C_0 M-C_1$ is negative semidefinite. A direct
computation shows that a sufficient condition is
\begin{equation}\label{hyp}
    \alpha_0<0,\qquad m_4\le\tfrac{3}{2},\qquad
    4m_1-(m_2^2+m_3^2)\le2.
\end{equation}
By Theorem~\ref{T1}, under \eqref{hyp} the operator $A_R$ generates a
$C_0$-semigroup of contractions.

The Krein adjoint relation is $R^{[*]}=\operatorname{graph}(M^\#)$ with
$M^\#=C_1^{-1}M^*C_0$, given explicitly by
\begin{equation}
    M^\#=\begin{pmatrix}
        1 & 0 & 0 & 0 & 0 & 0\\[2pt]
        \dfrac{-m_2-m_3}{2} & \dfrac{1}{2} & 0 & -1 & \dfrac{1}{2} & 0\\[8pt]
        \dfrac{m_1-1}{2} & \dfrac{-(m_2+m_3)}{4} & \dfrac{1}{2} & \dfrac{m_3}{2} & \dfrac{-(m_2+m_3)}{4} & 0\\[8pt]
        0 & 0 & 0 & 1 & 0 & 0\\[2pt]
        0 & 0 & 0 & 0 & 0 & 0\\[2pt]
        \dfrac{m_2}{2} & -\dfrac{1}{2} & 0 & \dfrac{m_4-1}{2} & -\dfrac{1}{2} & \dfrac{1}{2}
    \end{pmatrix},
\end{equation}
which is also a $(\mathcal{C}_0,\mathcal{C}_1)$-contraction under \eqref{hyp},
in agreement with Remark~\ref{rem:auto}.

Explicitly, the domain of $A_R$ consists of all $(\phi,\psi)\in D(A_0^*)$
satisfying $(\Gamma_1(\phi,\psi),\,\Gamma_0(\phi,\psi))\in R$, that is,
\begin{equation}\label{AMz}
    \begin{split}
        &\phi(-L)=\phi(L)=\psi(L),\\[4pt]
        &\phi'(L)-\phi'(-L)
        =\left(\tfrac{m_2+m_3+2}{2}\right)\phi(-L),\\[4pt]
        &\phi''(L)-2\psi''(L)
        =(m_3+m_4)\,\psi(L)+2\,\phi'(-L),\\[4pt]
        &\phi''(L)-2\phi''(-L)
        =(m_1+m_2)\,\phi(-L)+(m_2+m_3)\,\phi'(-L).
    \end{split}
\end{equation}
In particular, setting $m_4=m_1=0$, $m_3=-2$, and writing $z=m_2/2\in\mathbb{R}$,
the conditions \eqref{AMz} reduce to
\begin{equation}
    \begin{split}
        &\phi(-L)=\phi(L)=\psi(L),\qquad
        \phi'(L)-\phi'(-L)=z\,\phi(-L),\\[4pt]
        &\phi''(L)-\phi''(-L)-\psi''(L)
        =z\,\phi'(-L)+(z-1)\,\phi(-L).
    \end{split}
\end{equation}

The adjoint operator $A_R^*=-A_{R^{[*]}}$ has domain consisting of all
$(\phi,\psi)\in D(A_0^*)$ satisfying
$(\Gamma_0(\phi,\psi),\,\Gamma_1(\phi,\psi))\in R^{[*]}$, i.e.
\begin{equation}
    \begin{split}
        &\phi(-L)=\phi(L)=\psi(L),\qquad \psi'(L)=0,\\[4pt]
        &-\tfrac{m_2+m_3}{2}\,\phi(L)+\tfrac{1}{2}\,\phi'(L)-\phi(-L)
        +\tfrac{1}{2}\,\psi'(L) = \phi'(-L),\\[4pt]
        &\tfrac{m_1-1}{2}\,\phi(L)-\tfrac{m_2+m_3}{4}\,\phi'(L)
        +\tfrac{1}{2}\,\phi''(L)+\tfrac{m_3}{2}\,\phi(-L)
        -\tfrac{m_2+m_3}{4}\,\psi'(L) = \phi''(-L),\\[4pt]
        &\tfrac{m_2}{2}\,\phi(L)-\tfrac{1}{2}\,\phi'(L)
        +\tfrac{m_4-1}{2}\,\phi(-L)
        -\tfrac{1}{2}\,\psi'(L)+\tfrac{1}{2}\,\phi''(L) = \psi''(L).
    \end{split}
\end{equation}
\end{example}

\begin{remark} A canonical example of extensions not reachable by linear operators is the
partial Dirichlet relation
\begin{equation}
    R_j:=\bigl\{(x,y)\in\mathcal{C}_1\times\mathcal{C}_0:
    x_j=0\bigr\},
\end{equation}
where $x_j\in\mathbb{C}^3$ is the $j$-th block of $x$, which imposes
$\partial\psi_j(L)=0$ (i.e.\ $\psi_j(L)=\psi_j'(L)=\psi_j''(L)=0$) while
leaving $\Gamma_0\mathbf{U}$ unconstrained. Such relations are contractive
whenever the residual quadratic form $(y\mid y)_0$ is non-positive on
$\{0\}\times\operatorname{mul}(R)$. No finite matrix $\mathcal{L}$ can encode these
boundary conditions, since that would require $\mathcal{L}\cdot 0=y\ne0$,
which is impossible for a linear map.
\end{remark}
\subsection{Boundary system separating derivatives}\label{S2.4}

A finer description of the boundary behavior is obtained by separating
the quadratic form involving first derivatives from those involving the
function values and second derivatives.

The technique of separating the first-derivative component from the 
boundary system, developed in the present subsection, is not merely a 
formal device. It has already proven to be an effective tool in the 
qualitative analysis of dispersive equations on metric graphs: in 
\cite{MNS2}, this separation of derivatives was exploited to construct solitary-wave solutions for the 
Korteweg--de Vries equation on star graphs. Furthermore, the class of 
extensions $A_{Y,R_1}$ introduced below, together with the contraction 
semigroups they generate, constitutes the functional-analytic backbone 
of an ongoing program aimed at the stabilization and controllability of 
the KdV equation on metric graphs.

\medskip
We recall the notation
$\vec{\mathbf{U}}$, $\vec{\mathbf{U}}'$, $\vec{\mathbf{U}}''$
introduced in Subsection~2.1.

\begin{lemma}\label{LN}
For any $\mathbf{U}=(\phi,\psi_1,\dots,\psi_N)$ and
$\mathbf{V}=(\tilde\phi,\tilde\psi_1,\dots,\tilde\psi_N)$ in $D(A_0^*)$,
\begin{equation}\label{boundaryf2}
    [A_0^*\mathbf{U},\mathbf{V}]+[\mathbf{U},A_0^*\mathbf{V}]
    =\bigl(D_\beta\vec{\mathbf{U}}\mid\vec{\mathbf{V}}\bigr)
    +\bigl(D_\alpha\vec{\mathbf{U}}''\mid\vec{\mathbf{V}}\bigr)
    +\bigl(D_\alpha\vec{\mathbf{U}}\mid\vec{\mathbf{V}}''\bigr)
    -\bigl(D_\alpha\vec{\mathbf{U}}'\mid\vec{\mathbf{V}}'\bigr),
\end{equation}
where
\begin{equation}
    D_\alpha:=\operatorname{diag}(\alpha_0,-\alpha_0,\alpha_1,\dots,\alpha_N),
    \qquad
    D_\beta:=\operatorname{diag}(\beta_0,-\beta_0,\beta_1,\dots,\beta_N).
\end{equation}
\end{lemma}

\begin{definition}
Consider the spaces $\mathcal{K}_+\cong\mathcal{K}_-\cong\mathbb{C}^{N+1}$
equipped with the weighted inner products
\begin{equation}
    (x\mid y)_{\mathcal{K}_+}:=(D_\alpha^+\,x\mid y),
    \qquad
    (x\mid y)_{\mathcal{K}_-}:=(D_\alpha^-\,x\mid y),
\end{equation}
where
\begin{equation}
    D_\alpha^+:=\operatorname{diag}(\alpha_0,\alpha_1,\dots,\alpha_N),
    \qquad
    D_\alpha^-:=\frac{1}{N+1}\operatorname{diag}(\alpha_0,\alpha_0,\dots,\alpha_0).
\end{equation}
Both $(\mathcal{K}_\pm,(\cdot\mid\cdot)_{\mathcal{K}_\pm})$ are Krein spaces.
\end{definition}

The matrices $D_\alpha^-$ and $D_\alpha^+$ split $D_\alpha$: the former
collects the coefficient of $\phi(L)$ and the latter collects all remaining
coefficients. Specifically, \eqref{boundaryf2} and the identity
\begin{equation}\label{cuc}
    \bigl(D_\alpha\vec{\mathbf{U}}'\mid\vec{\mathbf{V}}'\bigr)
    =\left(\begin{pmatrix}\phi'(-L)\\\psi_1'(L)\\\vdots\\\psi_N'(L)\end{pmatrix}
    \;\middle|\;
    \begin{pmatrix}\tilde\phi'(-L)\\\tilde\psi_1'(L)\\\vdots\\\tilde\psi_N'(L)\end{pmatrix}
    \right)_{\mathcal{K}_+}
    -
    \left(\begin{pmatrix}\phi'(L)\\\phi'(L)\\\vdots\\\phi'(L)\end{pmatrix}
    \;\middle|\;
    \begin{pmatrix}\tilde\phi'(L)\\\tilde\phi'(L)\\\vdots\\\tilde\phi'(L)\end{pmatrix}
    \right)_{\mathcal{K}_-}
\end{equation}
yield the split Green identity
\begin{equation}\label{eq:GIsplit}
\begin{split}
    [A_0^*\mathbf{U},\mathbf{V}]+[\mathbf{U},A_0^*\mathbf{V}]
    &=\Bigl(D_\alpha\vec{\mathbf{U}}''+\tfrac12 D_\beta\vec{\mathbf{U}}
      \;\Big|\;\vec{\mathbf{V}}\Bigr)
    +\Bigl(\vec{\mathbf{U}}\;\Big|\;\tfrac12 D_\beta\vec{\mathbf{V}}
      +D_\alpha\vec{\mathbf{V}}''\Bigr)\\
    &\quad
    -\left(\begin{pmatrix}\phi'(-L)\\\psi_1'(L)\\\vdots\\\psi_N'(L)\end{pmatrix}
    \;\middle|\;
    \begin{pmatrix}\tilde\phi'(-L)\\\tilde\psi_1'(L)\\\vdots\\\tilde\psi_N'(L)\end{pmatrix}
    \right)_{\mathcal{K}_+}
    +\left(\begin{pmatrix}\phi'(L)\\\phi'(L)\\\vdots\\\phi'(L)\end{pmatrix}
    \;\middle|\;
    \begin{pmatrix}\tilde\phi'(L)\\\tilde\phi'(L)\\\vdots\\\tilde\phi'(L)\end{pmatrix}
    \right)_{\mathcal{K}_-}.
\end{split}
\end{equation}

\subsubsection{Boundary relations for the first-derivative part}

We introduce derivative trace maps
$\Lambda_+,\Lambda_-:D(A_0^*)\to\mathcal{K}_\pm$ by
\begin{equation}\label{eq:Lamps}
    \Lambda_+\mathbf{U}
    :=\begin{pmatrix}\phi'(-L)\\\psi_1'(L)\\\vdots\\\psi_N'(L)\end{pmatrix},
    \qquad
    \Lambda_-\mathbf{U}
    :=\begin{pmatrix}\phi'(L)\\\phi'(L)\\\vdots\\\phi'(L)\end{pmatrix}.
\end{equation}

\begin{definition}\label{def:R1}
A \emph{derivative boundary relation} is a linear subspace
$R_1\subset\mathcal{K}_+\times\mathcal{K}_-$.
Its \emph{Krein adjoint} $R_1^{[*]}\subset\mathcal{K}_-\times\mathcal{K}_+$
is defined by
\begin{equation}\label{eq:R1adj}
    (y,x)\in R_1^{[*]}
    \;\Longleftrightarrow\;
    (y_1\mid y)_{\mathcal{K}_-}=(x_1\mid x)_{\mathcal{K}_+}
    \quad\text{for all }(x_1,y_1)\in R_1.
\end{equation}
The relation $R_1$ is called a $(\mathcal{K}_+,\mathcal{K}_-)$-contraction if
\begin{equation}\label{eq:R1contr}
    (y\mid y)_{\mathcal{K}_-}\le(x\mid x)_{\mathcal{K}_+}
    \qquad\text{for all }(x,y)\in R_1.
\end{equation}
When $R_1=\operatorname{graph}(\mathcal{L})$ for a linear map
$\mathcal{L}:\mathcal{K}_+\to\mathcal{K}_-$, the Krein adjoint satisfies
$R_1^{[*]}=\operatorname{graph}(\mathcal{L}^\#)$ with
$\mathcal{L}^\#:=(D_\alpha^+)^{-1}\mathcal{L}^*D_\alpha^-$,
and \eqref{eq:R1contr} reduces to
$(D_\alpha^+)^{-1}\mathcal{L}^*D_\alpha^-\mathcal{L}-I\preceq0$.
\end{definition}

\subsubsection{Extensions generating contractive dynamics via splitting derivatives}

\begin{definition}\label{def2.8}
Let $R_1\subset\mathcal{K}_+\times\mathcal{K}_-$ be a linear boundary relation
and let $Y\subset\mathbb{C}^{N+2}$ be a closed subspace. Define
\begin{equation}
\begin{split}
    D(A_{Y,R_1})
    :=\Bigl\{\mathbf{U}\in D(A_0^*) \;\Big|\;
    &\vec{\mathbf{U}}\in Y,\quad
    D_\alpha\vec{\mathbf{U}}''+\tfrac12 D_\beta\vec{\mathbf{U}}\in Y^\perp,\\
    &\text{and}\quad
    (\Lambda_+\mathbf{U},\,\Lambda_-\mathbf{U})\in R_1\Bigr\},
\end{split}
\end{equation}
with $A_{Y,R_1}\mathbf{U}:=-A_0^*\mathbf{U}$.
\end{definition}

The subspace condition $\vec{\mathbf{U}}\in Y$ and the orthogonality condition
$D_\alpha\vec{\mathbf{U}}''+\frac12 D_\beta\vec{\mathbf{U}}\in Y^\perp$ together
govern how function values and second derivatives interact at the vertex.
The relation condition $(\Lambda_+\mathbf{U},\Lambda_-\mathbf{U})\in R_1$
governs the first-derivative coupling independently.

\begin{proposition}\label{P3}
Let $R_1\subset\mathcal{K}_+\times\mathcal{K}_-$ be a linear boundary relation
and $Y\subset\mathbb{C}^{N+2}$ a closed subspace. Then
$A_{Y,R_1}^*=-A_{Y,R_1^{[*]}}$, where $A_{Y,R_1^{[*]}}$ acts as $A_0^*$
with domain
\begin{equation}
\begin{split}
    D\!\left(A_{Y,R_1^{[*]}}\right)
    =\Bigl\{\mathbf{V}\in D(A_0^*) \;\Big|\;
    &\vec{\mathbf{V}}\in Y,\quad
    \tfrac12 D_\beta\vec{\mathbf{V}}+D_\alpha\vec{\mathbf{V}}''\in Y^\perp,\\
    &\text{and}\quad
    (\Lambda_-\mathbf{V},\,\Lambda_+\mathbf{V})\in R_1^{[*]}\Bigr\}.
\end{split}
\end{equation}
\end{proposition}

\begin{proof}
Let $\mathbf{U}\in D(A_{Y,R_1})$ and $\mathbf{V}\in D(A_0^*)$.
Using \eqref{eq:GIsplit} and $A_{Y,R_1}\mathbf{U}=-A_0^*\mathbf{U}$,
\begin{equation}\label{E10}
\begin{split}
    [A_{Y,R_1}\mathbf{U},\mathbf{V}]
    &=[\mathbf{U},A_0^*\mathbf{V}]
    -\Bigl(D_\alpha\vec{\mathbf{U}}''+\tfrac12 D_\beta\vec{\mathbf{U}}
     \;\Big|\;\vec{\mathbf{V}}\Bigr)
    -\Bigl(\vec{\mathbf{U}}\;\Big|\;\tfrac12 D_\beta\vec{\mathbf{V}}
     +D_\alpha\vec{\mathbf{V}}''\Bigr)\\
    &\quad
    +(\Lambda_+\mathbf{U}\mid\Lambda_+\mathbf{V})_{\mathcal{K}_+}
    -(\Lambda_-\mathbf{U}\mid\Lambda_-\mathbf{V})_{\mathcal{K}_-}.
\end{split}
\end{equation}
We need $[A_{Y,R_1}\mathbf{U},\mathbf{V}]=[\mathbf{U},A_{Y,R_1}^*\mathbf{V}]$
for all $\mathbf{U}\in D(A_{Y,R_1})$.

Taking $\mathbf{U}$ with $\vec{\mathbf{U}}=\vec{\mathbf{U}}'=0$
gives $(D_\alpha\vec{\mathbf{U}}''\mid\vec{\mathbf{V}})=0$ for all
$D_\alpha\vec{\mathbf{U}}''\in Y^\perp$, hence $\vec{\mathbf{V}}\in Y$.

For arbitrary $x\in Y$, taking $\mathbf{U}$ with
$\vec{\mathbf{U}}=x$ and $\vec{\mathbf{U}}'=0$, the condition
$D_\alpha\vec{\mathbf{U}}''+\frac12 D_\beta\vec{\mathbf{U}}\in Y^\perp$
and \eqref{E10} yield
\[
    \Bigl(x\;\Big|\;\tfrac12 D_\beta\vec{\mathbf{V}}+D_\alpha\vec{\mathbf{V}}''\Bigr)
    =\Bigl(D_\alpha\vec{\mathbf{U}}''+\tfrac12 D_\beta\vec{\mathbf{U}}
     \;\Big|\;\vec{\mathbf{V}}\Bigr)=0,
\]
hence $\frac12 D_\beta\vec{\mathbf{V}}+D_\alpha\vec{\mathbf{V}}''\in Y^\perp$.

At this point \eqref{E10} reduces to the
requirement that for all $\mathbf{U}\in D(A_{Y,R_1})$,
\[
    (\Lambda_+\mathbf{U}\mid\Lambda_+\mathbf{V})_{\mathcal{K}_+}
    =(\Lambda_-\mathbf{U}\mid\Lambda_-\mathbf{V})_{\mathcal{K}_-}.
\]
Since $(\Lambda_+\mathbf{U},\Lambda_-\mathbf{U})\in R_1$, the definition
\eqref{eq:R1adj} of the Krein adjoint shows this is equivalent to
$(\Lambda_-\mathbf{V},\Lambda_+\mathbf{V})\in R_1^{[*]}$.
\end{proof}

We now obtain, once more, dissipativity via contractivity.
\begin{proposition}\label{P4}
$A_{Y,R_1}$ is dissipative if and only if $R_1$ is a
$(\mathcal{K}_+,\mathcal{K}_-)$-contraction.
\end{proposition}

\begin{proof}
For $\mathbf{U}\in D(A_{Y,R_1})$, set $(x,y):=(\Lambda_+\mathbf{U},\Lambda_-\mathbf{U})\in R_1$.
The split Green identity \eqref{eq:GIsplit} applied to $(\mathbf{U},\mathbf{U})$,
together with the conditions $\vec{\mathbf{U}}\in Y$ and
$D_\alpha\vec{\mathbf{U}}''+\frac12 D_\beta\vec{\mathbf{U}}\in Y^\perp$, gives
\begin{equation}
    2\operatorname{Re}[A_{Y,R_1}\mathbf{U},\mathbf{U}]
    =(x\mid x)_{\mathcal{K}_+}-(y\mid y)_{\mathcal{K}_-}.
\end{equation}
Hence $\operatorname{Re}[A_{Y,R_1}\mathbf{U},\mathbf{U}]\le0$ for all
$\mathbf{U}\in D(A_{Y,R_1})$ if and only if $(y\mid y)_{\mathcal{K}_-}\le(x\mid x)_{\mathcal{K}_+}$
for all $(x,y)\in R_1$, i.e.\ $R_1$ is contractive.
\end{proof}

\begin{remark}\label{rem:auto2}
The same Hilbert-space similarity argument as in Remark~\ref{rem:auto}
shows that contractivity of $R_1$ automatically implies
contractivity of $R_1^{[*]}$. Hence dissipativity of $A_{Y,R_1}$ entails
dissipativity of $A_{Y,R_1}^*$.
\end{remark}

\begin{theorem}\label{T2}
Let $R_1\subset\mathcal{K}_+\times\mathcal{K}_-$ be a linear boundary relation
and $Y\subset\mathbb{C}^{N+2}$ a closed subspace. Then $A_{Y,R_1}$ generates
a $C_0$-semigroup of contractions on $L^2(\mathcal{G})$ if and only if $R_1$
is a $(\mathcal{K}_+,\mathcal{K}_-)$-contraction.
\end{theorem}

\begin{proof}
If $R_1$ is contractive, Proposition~\ref{P4} gives dissipativity of
$A_{Y,R_1}$, and Remark~\ref{rem:auto2} gives dissipativity of
$A_{Y,R_1}^*$. The Lumer--Phillips theorem yields the conclusion.
Conversely, if $A_{Y,R_1}$ generates a contraction semigroup it is
dissipative, and Proposition~\ref{P4} gives contractivity of $R_1$.
\end{proof}

\subsubsection{Examples}

For simplicity we take $N=1$ (tadpole graph) and $\alpha_0=\alpha_1$.

\begin{example}\label{ex5}
For $m_1,m_2,m_3\in\mathbb{R}$ with $m_3\ne0$, consider the operator
\[
    \mathcal{L}:=\begin{pmatrix}m_1&m_2\\m_3&-\tfrac{m_1m_2}{m_3}\end{pmatrix}
\]
and the operator-type relation $R_1:=\operatorname{graph}(\mathcal{L})
\subset\mathcal{K}_+\times\mathcal{K}_-$.
Its Krein adjoint is $R_1^{[*]}=\operatorname{graph}(\mathcal{L}^\#)$ with
\[
    \mathcal{L}^\#=\begin{pmatrix}\tfrac{m_1}{2}&\tfrac{m_3}{2}\\[4pt]
    \tfrac{m_2}{2}&-\tfrac{m_1m_2}{2m_3}\end{pmatrix}.
\]
Both $R_1$ and $R_1^{[*]}$ are contractive (hence, by Theorem~\ref{T2},
$A_{Y,R_1}$ generates a contraction semigroup) under the conditions
\begin{equation}\label{cond1}
    m_3^2+m_1^2\le2,\quad
    m_2^2(m_1^2+1)\le2,\quad
    m_1(m_3^2-m_2^2)=0,\quad
    m_1^2+m_2^2\le1,\quad
    \frac{m_1^2m_2^2}{m_3^2}+m_3^2\le2.
\end{equation}
In the case $m_3=m_2\ne0$, \eqref{cond1} reduces to
$m_2^2+m_1^2\le2$ and $m_2^2(m_1^2+1)\le2$.
In particular, $m_1=m_2=1$ gives the contractive relation
$R_1=\operatorname{graph}\!\begin{pmatrix}1&1\\1&-1\end{pmatrix}$.
\end{example}

The next example characterizes extensions that maintain the looping-edge
structure ($\phi(-L)=\phi(L)$) and include continuity at the vertex.

\begin{example}\label{ex6}
Let $z\in\mathbb{R}$ and $R_1\subset\mathcal{K}_+\times\mathcal{K}_-$ be a
contractive boundary relation. Consider the subspace
$Y_z:=\operatorname{span}\{(1,1,z)^\top\}$.

\medskip
\noindent\textbf{Case $z\ne0$.}\quad
$Y_z^\perp=\bigl\{(x,y,w)^\top\in\mathbb{C}^3:w=-(x+y)/z\bigr\}$.
By Definition~\ref{def2.8}, the domain of $A_{Y_z,R_1}$ consists of all
$\mathbf{U}\in D(A_0^*)$ satisfying
\begin{equation}
    \phi(-L)=\phi(L),\quad
    \psi(L)=z\,\phi(L),\quad
    z\!\left(\alpha_1\psi''(L)+\tfrac{\beta_1}{2}\psi(L)\right)
    =\alpha_0\!\left(\phi''(L)-\phi''(-L)\right),
\end{equation}
together with $(\Lambda_+\mathbf{U},\Lambda_-\mathbf{U})\in R_1$.

Taking $z=1$ and $R_1=\operatorname{graph}\!\begin{pmatrix}1&1\\1&-1\end{pmatrix}$
(which is contractive by Example~\ref{ex5}),
the domain specializes to
\begin{equation}
\begin{split}
    D(A_{Y_1,R_1})=\Bigl\{\mathbf{U}\in D(A_0^*)\;\Big|\;
    &\phi(L)=\phi(-L)=\psi(L),\quad\phi'(L)=\phi'(-L),\\
    &\psi'(L)=0,\quad
    \alpha_1\psi''(L)+\tfrac{\beta_1}{2}\psi(L)=\alpha_0\bigl(\phi''(L)-\phi''(-L)\bigr)\Bigr\},
\end{split}
\end{equation}
and by Theorem~\ref{T2}, $A_{Y_1,R_1}$ generates a contraction semigroup on
the tadpole graph.

\medskip
\noindent\textbf{Case $z=0$.}\quad
$Y_0^\perp=\{(x,y,w)^\top\in\mathbb{C}^3:x=-y\}$.
The domain of $A_{Y_0,R_1}$ consists of all $\mathbf{U}\in D(A_0^*)$ satisfying
\begin{equation}
    \phi(-L)=\phi(L),\quad\psi(L)=0,\quad\phi''(L)=\phi''(-L),
\end{equation}
together with $(\Lambda_+\mathbf{U},\Lambda_-\mathbf{U})\in R_1$.

Taking $R_1=\operatorname{graph}\!\begin{pmatrix}1&1\\1&-1\end{pmatrix}$,
the domain specializes to
\begin{equation}\label{hol}
\begin{split}
    D(A_{Y_0,R_1})=\Bigl\{\mathbf{U}\in D(A_0^*)\;\Big|\;
    &\phi(L)=\phi(-L),\quad\psi(L)=0,\\
    &\phi'(L)=\phi'(-L),\quad\phi''(L)=\phi''(-L)\Bigr\},
\end{split}
\end{equation}
and $A_{Y_0,R_1}$ generates a contraction semigroup on the tadpole graph.
\end{example}

\begin{example}\label{control}
Assume $\alpha_j\equiv\alpha$ and $\beta_j\equiv\beta$ for all 
$j=0,\dots,N$. Let
\[
    Y:=\operatorname{span}\{(0,0,1,\dots,1)^T\}\subset\mathbb{R}^{N+2},
\]
so that
\[
    Y^\perp=\left\{(y_1,y_2,x_1,\dots,x_N)\in\mathbb{R}^{N+2}
    \;\Bigm|\;\sum_{j=1}^N x_j=0\right\}.
\]
For $N\in\mathbb{N}$ arbitrary and parameters $m_1,\dots,m_N\in\mathbb{R}$ 
satisfying $m_j^2\le N+1$ for all $j=1,\dots,N$, consider the 
operator-type boundary relation
\begin{equation}\label{operat}
    R_1:=\operatorname{graph}(\mathcal{L}),\qquad
    \mathcal{L}:=\operatorname{diag}(1,m_1,\dots,m_N)
    :\mathcal{K}_+\to\mathcal{K}_-.
\end{equation}
Its Krein adjoint relation is $R_1^{[*]}=\operatorname{graph}(\mathcal{L}^\#)$ 
with $\mathcal{L}^\#=\frac{1}{N+1}\mathcal{L}$. The condition 
$m_j^2\le N+1$ ensures that both $R_1$ and $R_1^{[*]}$ are 
$(\mathcal{K}_+,\mathcal{K}_-)$-contractions in the boundary system 
of Section~\ref{S2.4}, so that Theorem~\ref{T2} applies.\newline
The corresponding extension $A_{Y,R_1}$ is defined on
\begin{equation}\label{Dgen}
\begin{split}
    D(A_{Y,R_1}):=\Big\{\mathbf{U}\in H^3(\mathcal{G})\;\Bigm|\;
    &\phi(-L)=\phi(L)=0,\quad
    \psi_1(L)=\psi_j(L)\text{ for all }j=1,\dots,N,\\
    &\phi'(-L)=\phi'(L)=m_j\psi_j'(L)\text{ for all }j=1,\dots,N,\\
&\sum_{j=1}^N\psi_j''(L)=\frac{-\beta}{2\alpha}\,N\psi_1(L)\Big\},
\end{split}
\end{equation}
and is the infinitesimal generator of a $C_0$-semigroup of contractions 
on $L^2(\mathcal{G})$.

\medskip
The special case $m_j=0$ for all $j=1,\dots,N$ corresponds to the 
boundary relation
\[
R_1^0:=\operatorname{graph}(\operatorname{diag}(1,0,\dots,0)),
\]
whose multivalued part satisfies 
$\operatorname{mul}(R_1^0)=\{(0,y_2,\dots,y_N): y_j\in\mathbb{C}\}$, 
reflecting the fact that the first-derivative traces 
$\psi_1'(L),\dots,\psi_N'(L)$ are forced to zero without any 
constraint on $\Lambda_-\mathbf{U}$. This is a genuine relation, not 
representable by any invertible operator, and the domain reduces to
\begin{equation}\label{dcont}
\begin{split}
    D(A_{Y,R_1^0})=\Big\{\mathbf{U}\in H^3(\mathcal{G})\;\Bigm|\;
    &\phi(L)=\phi(-L)=\phi'(L)=\phi'(-L)=0,\\
    &\psi_1(L)=\psi_j(L)\text{ for all }j=1,\dots,N,\\
    &\psi_j'(L)=0\text{ for all }j=1,\dots,N,\\
    &\sum_{j=1}^N\psi_j''(L)=\frac{-\beta}{2\alpha}\,N\psi_1(L)\Big\}.
\end{split}
\end{equation}
The operator $A_{Y,R_1^0}$ generates a $C_0$-semigroup of contractions 
by Theorem~\ref{T2}, and its domain coincides with the looping-edge analogous of the class of vertex 
conditions employed in the study of stabilization and controllability 
of dispersive equations on star-shaped networks \cite{AmCrep,PCP}.
\end{example}
\subsection{A simple application to exponential stabilization}\label{s4.4}

For simplicity we work within the setting of Example~\ref{ex6} (case $z=0$) on the
tadpole graph $\mathcal{G}$ with $N=1$, taking $\alpha_0=\alpha_1=\alpha>0$ and
$\beta_0=\beta_1=\beta\in\mathbb{R}$. The operator $A=A_{Y_0,R_1}$ from \eqref{hol},
together with the relation
$R_1=\operatorname{graph}\!\begin{pmatrix}1&1\\1&-1\end{pmatrix}$,
acts on all $\mathbf{U}=(\phi,\psi)\in H^3(\mathcal{G})$ satisfying
\[
    \phi(-L)=\phi(L),\qquad
    \psi(L)=0,\qquad
    \phi'(-L)=\phi'(L),\qquad
    \phi''(-L)=\phi''(L),
\]
together with $\psi'(L)=0$. This last condition follows directly from $R_1$:
the equation $\Lambda_-\mathbf{U}=\mathcal{L}\,\Lambda_+\mathbf{U}$ reads
\[
    \begin{pmatrix}\phi'(L)\\\phi'(L)\end{pmatrix}
    =\begin{pmatrix}1&1\\1&-1\end{pmatrix}
    \begin{pmatrix}\phi'(-L)\\\psi'(L)\end{pmatrix},
\]
and subtracting the two rows forces $\psi'(L)=0$, after which either row gives
$\phi'(L)=\phi'(-L)$. Fix $\gamma>0$ and consider the damped problem
\begin{equation}\label{eq:damped-ex6}
    \partial_t\mathbf{U}(t)=A\,\mathbf{U}(t)-\gamma\,\mathbf{U}(t),
    \qquad t>0,\qquad
    \mathbf{U}(0)=\mathbf{U}_0\in D(A).
\end{equation}

\begin{theorem}\label{thm:stab}
Let $\mathbf{U}(t)$ be a classical solution of \eqref{eq:damped-ex6}. Then the
energy $E(t):=\tfrac{1}{2}\|\mathbf{U}(t)\|^2_{L^2(\mathcal{G})}$ satisfies
\[
    E(t)\le e^{-2\gamma t}\,E(0),\qquad t\ge0,
\]
or equivalently $\|\mathbf{U}(t)\|_{L^2(\mathcal{G})}\le
e^{-\gamma t}\|\mathbf{U}_0\|_{L^2(\mathcal{G})}$ for all $t\ge0$.
In particular, the solution decays to zero exponentially at rate $\gamma$,
independently of the dispersive coefficients $\alpha$ and $\beta$.
\end{theorem}

\begin{proof}
We first address the existence of classical solutions to \eqref{eq:damped-ex6}.
Since $A_{Y_0,R_1}$ generates a $C_0$-semigroup of contractions on
$L^2(\mathcal{G})$ by Theorem~\ref{T2}, the operator $A-\gamma I$ is dissipative
for every $\gamma>0$. Moreover, $(A-\gamma I)^*=A^*-\gamma I$, and since $A^*$
is also dissipative (as noted in Remark~\ref{rem:auto2}), so is $A^*-\gamma I$.
The Lumer--Phillips theorem therefore guarantees that $A-\gamma I$ generates a
$C_0$-semigroup of contractions on $L^2(\mathcal{G})$, and in particular that
\eqref{eq:damped-ex6} admits a unique classical solution
$\mathbf{U}\in C^1([0,\infty);L^2(\mathcal{G}))\cap C([0,\infty);D(A))$
for every $\mathbf{U}_0\in D(A)$.

Writing \eqref{eq:damped-ex6} componentwise gives
$\phi_t=-\alpha\phi'''-\beta\phi'-\gamma\phi$ on $(-L,L)$ and
$\psi_t=-\alpha\psi'''-\beta\psi'-\gamma\psi$ on $(L,\infty)$.
Differentiating the energy and substituting the equations of motion yields
\begin{align*}
    \frac{d}{dt}E(t)
    &=-\alpha\!\int_{-L}^{L}\!\phi\phi'''\,dx
     -\beta\!\int_{-L}^{L}\!\phi\phi'\,dx
     -\gamma\!\int_{-L}^{L}\!\phi^2\,dx
     -\alpha\!\int_{L}^{\infty}\!\psi\psi'''\,dx
     -\beta\!\int_{L}^{\infty}\!\psi\psi'\,dx
     -\gamma\!\int_{L}^{\infty}\!\psi^2\,dx.
\end{align*}
We claim that all terms except those with factor $-\gamma$ vanish identically.
Indeed, integrating by parts twice on the loop edge,
\[
    \int_{-L}^{L}\phi\,\phi'''\,dx
    =\Bigl[\phi''\phi\Bigr]_{-L}^{L}-\int_{-L}^{L}\phi''\phi'\,dx
    =\bigl(\phi''(L)\phi(L)-\phi''(-L)\phi(-L)\bigr)
    -\frac{1}{2}\bigl(\phi'(L)^2-\phi'(-L)^2\bigr),
\]
and every boundary term cancels upon applying $\phi(-L)=\phi(L)$,
$\phi'(-L)=\phi'(L)$, and $\phi''(-L)=\phi''(L)$, giving
$\int_{-L}^{L}\phi\,\phi'''\,dx=0$.
Besides, since $\phi\phi'=\tfrac{1}{2}(\phi^2)'$,
\[
    \int_{-L}^{L}\phi\,\phi'\,dx
    =\frac{1}{2}\bigl(\phi(L)^2-\phi(-L)^2\bigr)=0,
\]
again by $\phi(-L)=\phi(L)$. On the other hand, functions in $D(A)$ belong to
$H^3(L,\infty)$, so $\psi$, $\psi'$, and $\psi''$ all decay to zero as
$x\to\infty$. Integrating by parts on the half-line therefore gives
\[
    \int_{L}^{\infty}\psi\,\psi'''\,dx
    =\Bigl[\psi''\psi\Bigr]_{L}^{\infty}-\int_{L}^{\infty}\psi''\psi'\,dx
    =-\psi''(L)\psi(L)+\frac{1}{2}\psi'(L)^2=0,
\]
where the last equality uses $\psi(L)=0$ and $\psi'(L)=0$. Similarly,
$\int_{L}^{\infty}\psi\,\psi'\,dx=\tfrac{1}{2}[\psi^2]_{L}^{\infty}
=-\tfrac{1}{2}\psi(L)^2=0$.
Consequently all boundary contributions vanish and the energy equation reduces to
\[
    \frac{d}{dt}E(t)
    =-\gamma\|\mathbf{U}(t)\|^2_{L^2(\mathcal{G})}
    =-2\gamma\,E(t).
\]
An application of Gr\"onwall's lemma to the differential inequality
$E'(t)\le -2\gamma E(t)$ gives the conclusion.
\end{proof}

\begin{remark}
The vanishing of all boundary terms is not a coincidence. It is a direct
manifestation of the fact that $A_{Y_0,R_1}$ generates a $C_0$-semigroup of
contractions on $L^2(\mathcal{G})$: by Theorem~\ref{T2}, this is equivalent to
the contractivity of $R_1$, which in turn forces the boundary terms arising in
the Green identity to be non-positive. In the particular case at hand they
vanish altogether, reflecting the precise balance between the subspace $Y_0$
and the relation $R_1$ in the definition of the domain. The systematic
exploitation of this connection between the contractivity of the boundary
relation, the structure of the domain, and the resulting energy dissipation is part of an ongoing work for the KdV equation on general metric graphs.
\end{remark}
\section{The Schr\"odinger operator on  looping-edge and $\mathcal{T}$-shaped  graphs}

In this section we employ the same techniques  used in the case of the free Airy operator to characterize the extensions of the Schr\"odinger operator on a looping-edge and a graph  determined by the edges $e_0=[-L,L]$ and $e_j=[L,\infty)$ considering $-L$ as a {\it terminal vertex}. This strategy will  allow us in the future to study the dynamics of NLS model \eqref{NLS} specially on $\mathcal T$-graphs.

Let us now consider the Schr\"odinger operator  
\begin{equation}
    \label{schr}
    \mathcal{H}_0:\{u_e\}_{e\in E}\mapsto \left\{-\frac{d^2}{dx_e^2} u_e\right\}_{e\in E},
\end{equation}
seen as a densely defined operator on $L^2(\mathcal{G})$ with domain $D(\mathcal{H}_0)$ defined on \eqref{cinf}. Note we would have $D(\mathcal{H}_0^*)=H^2(\mathcal{G})$.

\subsection{Deficiency indices of the free Schr\"odinger operator}
For the symmetric operator $\mathcal{H}_0$ in \eqref{schr} we have that free solutions of $-\partial_x^2 u_e \pm iu_e=0$ are given as linear combinations of complex exponentials of the form $$u_e=c_1 e^{\frac{(1\pm i)}{\sqrt{2}}x_e}+c_2 e^{-\frac{(1\pm i)}{\sqrt{2}}x_e}.$$ 

We note that $e^{\pm\frac{(1\pm i)}{\sqrt{2}}x}$ belong to $L^2(-L,L)$. On the other hand, on a half-line $[L,\infty)$, it would be square integrable only if the real part of $\pm\frac{(1\pm i)}{\sqrt{2}}$ is negative. Along each edge $e=[L,\infty)$ we would therefore always have that the deficiency indices are ${d_+}_e={d_-}_e=1$.

In a looping-edge graph with $N$ half-lines attached at the vertex we would have for $\mathcal{H}_0$ that the deficiency indices  $d_{\mp}=dim (Ker(\mathcal{H}_0\pm iI))$ satisfy $d_+=d_-=2+N$. 
From the von Neumann and Krein theory, one can always expect self-adjoint extensions of $\mathcal{H}_0$ and therefore unitary dynamics.

\subsection{Boundary systems}\label{sec:bdry_schr}
In this subsection we adapt the ideas developed in the case of the Airy operator, for the Schr\"odinger operator posed on $\mathcal{G}$.  

 \begin{definition}
  We define on the graph of $\mathcal{H}_0^*$, $G(\mathcal{H}_0^*)=\{(\textbf{U},\mathcal{H}_0^* \textbf{U})\mid \textbf{U} \in D(\mathcal{H}_0^*)\}$, the standard skew-symmetric form $\Lambda: G(\mathcal{H}_0^*)\times G(\mathcal{H}_0^*) \to \mathbb{C}, $ defined by \begin{equation}\label{form-}
\Lambda((\textbf{U},\mathcal{H}_0^*\textbf{U}),(\textbf{V},\mathcal{H}_0^*\textbf{V})):=[\mathcal{H}_0^*\textbf{U}, \textbf{V}]-[\textbf{U}, \mathcal{H}_0^*\textbf{V}]. 
\end{equation} 
\end{definition}

\begin{definition}\label{def:omega}
   Let $\mathcal{J}=\mathbb{C}^{N+2}$. Define the standard skew-symmetric sesquilinear form on
    $\mathcal{J}\oplus\mathcal{J}$ by
    \begin{equation}\label{eq:omega}
        \rho\bigl((x_1,y_1),(x_2,y_2)\bigr)
        :=\langle y_1,x_2\rangle_{\mathcal{J}}
         -\langle x_1,y_2\rangle_{\mathcal{J}},
        \qquad (x_i,y_i)\in\mathcal{J}\oplus\mathcal{J},
    \end{equation}
    where $\langle\,\cdot\,,\,\cdot\,\rangle_{\mathcal{J}}$ is the
    standard inner product on $\mathbb{C}^{N+2}$.
\end{definition}

\begin{definition}
    We define $\mathbb{F}$ to be the surjective boundary map
    \begin{equation}
        \begin{split}
            \mathbb{F}:&\ G(\mathcal{H}_0^*)\to \mathcal{J}\oplus \mathcal{J}\\
            &(\mathbf{U},\mathcal{H}_0^*\mathbf{U})\mapsto \bigl(\Gamma_0\mathbf{U},\,\Gamma_1\mathbf{U}\bigr),
        \end{split}
    \end{equation}
    where 
    \begin{equation*}
        \Gamma_0\mathbf{U}=\begin{pmatrix}\phi(-L)\\ \phi(L)\\ \psi_1(L)\\ \vdots\\ \psi_N(L)\end{pmatrix},
        \qquad
        \Gamma_1\mathbf{U}=\begin{pmatrix}\phi'(-L)\\ -\phi'(L)\\ \psi_1'(L)\\ \vdots\\ \psi_N'(L)\end{pmatrix}.
    \end{equation*}
\end{definition}

\begin{proposition}\label{prop:boundary_system}
    The quintuple $(\Lambda,\mathcal{J},\mathcal{J},\mathbb{F},\rho)$ is a \emph{boundary system} for $\mathcal{H}_0$, that is,
    $$\Lambda((\textbf{U},\mathcal{H}_0^*\textbf{U}),(\textbf{V},\mathcal{H}_0^*\textbf{V
}))
                =\rho\bigl(\mathbb{F}(\mathbf{U},\mathcal{H}_0^*\mathbf{U}),\,
                         \mathbb{F}(\mathbf{V},\mathcal{H}_0^*\mathbf{V})\bigr).$$
\end{proposition}

\begin{proof}
    The proof follows by integration by parts.
\end{proof}

\subsection{Self-adjoint extensions via boundary systems}

The following theorem, taken from \cite[Theorem~3.3]{Schu2015}, gives a complete
parametrization of all self-adjoint extensions of $\mathcal{H}_0$ in terms of
the boundary system constructed in the previous subsection.

\begin{theorem}\label{thm:SAext}
    An operator $\mathcal{H}$ is a self-adjoint extension of $\mathcal{H}_0$ if
    and only if there exist a subspace $X\subset\mathcal{J}=\mathbb{C}^{N+2}$
    and a self-adjoint operator $\mathcal{L}:X\to X$ such that
    $\mathcal{H}=\mathcal{H}_{X,\mathcal{L}}$, where
    \begin{equation}\label{eq:SAext}
        D(\mathcal{H}_{X,\mathcal{L}})=\Bigl\{\mathbf{U}\in H^2(\mathcal{G})\;\Big|\;
            \Gamma_0\mathbf{U}\in X, \quad \mathcal{L}\Gamma_0\mathbf{U}=P_X\Gamma_1\mathbf{U}
            \Bigr\}.
    \end{equation} 
    Here $P_X:\mathcal{J}\to X$ is the orthogonal projection onto $X$.
\end{theorem}

Theorem~\ref{thm:SAext} suggests a practical procedure for constructing self-adjoint extensions by prescribing boundary conditions in two stages. First, one translates the desired \(k\) conditions (with \(k < N + 2\)) into conditions on \(\Gamma_0\) and \(\Gamma_1\), which can then be directly incorporated into the definition of a matrix \(M : X \to X\). The second stage consists of completing the remaining (unconstrained) entries of \(M\) so that it becomes self-adjoint, and of determining the resulting conditions imposed by the relation \(M\Gamma_0 \mathbf{U} = P_X \Gamma_1 \mathbf{U}\). We exemplify the procedure next. 

\subsection{Examples}

\begin{example}[Self-adjoint extensions on a looping-edge graph]\label{ex:8}

We seek all self-adjoint extensions of $\mathcal{H}_0$ whose domain
includes only the condition looping condition $\phi(-L)=\phi(L)$. This condition constrains $\Gamma_0\mathbf{U}$ in a way that the condition $\Gamma_0\mathbf{U}\in X$ translates into the choice
\[
    X :=\bigl\{x\in\mathcal{J}\;\big|\;x_1=x_2\bigr\},
    \qquad
    X^{\perp}=\operatorname{span}\{e_1-e_2\},
    \qquad
    \dim X = N+1,
\]
with orthogonal projection $P_X v = v-\tfrac{v_1-v_2}{2}(e_1-e_2)$.
Since no further constraint is imposed, $\mathcal{L}:X\to X$ is a
free self-adjoint operator on $X$.

\medskip
Consider the orthonormal basis for $X$
\[
    g_1:=\frac{e_1+e_2}{\sqrt{2}},
    \qquad
    g_{k}:=e_{k+1},\quad k=2,\ldots,N+1.
\]
In this basis the coordinate vectors of $\Gamma_0\mathbf{U}$ and
$P_X\Gamma_1\mathbf{U}$ are, respectively,
\[
    \widetilde\Gamma_0\mathbf{U}
    =\bigl(\sqrt{2}\,\phi(-L),\;\psi_1(L),\;\ldots,\;\psi_N(L)\bigr)^{\!\top},
    \qquad
    \widetilde\Gamma_1\mathbf{U}
    =\Bigl(\tfrac{\phi'(-L)-\phi'(L)}{\sqrt{2}},\;
           \psi_1'(L),\;\ldots,\;\psi_N'(L)\Bigr)^{\!\top}.
\]
Write the matrix of $\mathcal{L}$ in the given basis in the following block form \begin{equation}\label{eq:M-full}
    M=\begin{pmatrix}a & \mathbf{b}^{*}\\\mathbf{b} & M'\end{pmatrix},
    \qquad
    a\in\mathbb{R},\quad
    \mathbf{b}\in\mathbb{C}^N,\quad
    M'\in M_N(\mathbb{C})\text{ Hermitian},
\end{equation}
where $\mathbf{b}^*$ denotes the conjugate transpose of $\mathbf{b}$.
This constitutes an $(N+1)^2$-real-parameter family.

\medskip
Expanding $M\,\widetilde\Gamma_0\mathbf{U}=\widetilde\Gamma_1\mathbf{U}$
row by row one gets
\begin{align}
    \phi'(-L)-\phi'(L)
        &= 2a\,\phi(-L)+\sqrt{2}\,\mathbf{b}^{*}\vec\psi(L),
        \label{eq:loop-bc}\\[6pt]
    \vec\psi\,'(L)
        &= \sqrt{2}\,\mathbf{b}\,\phi(-L)+M'\vec\psi(L),
        \label{eq:half-line-bc}
\end{align}
where $\vec\psi(L)=(\psi_1(L),\ldots,\psi_N(L))^{\!\top}$ and
$\vec\psi\,'(L)=(\psi_1'(L),\ldots,\psi_N'(L))^{\!\top}$.

\begin{theorem}\label{thm:single-loop-ext}
    The self-adjoint extensions of $\mathcal{H}_0$ whose domain satisfies
    the looping condition $\phi(L)=\phi(-L)$ form an $(N+1)^2$-real-parameter family
    $\{\mathcal{H}_{a,\mathbf{b},M'}\}$, parametrised by
    $a\in\mathbb{R}$, $\mathbf{b}\in\mathbb{C}^N$, and a Hermitian
    $M'\in M_N(\mathbb{C})$ with domain 
    \[
        D(\mathcal{H}_{a,\mathbf{b},M'})
        =\Bigl\{\mathbf{U}\in H^2(\mathcal{G})\;\Big|\;
            \phi(-L)=\phi(L),\;
            \eqref{eq:loop-bc},\;
            \eqref{eq:half-line-bc}
        \Bigr\}.
    \]
\end{theorem}

\begin{remark}
    The $2N+1$ real parameters $(a,\operatorname{Re}\mathbf{b},
    \operatorname{Im}\mathbf{b})$ account for two effects. The vector $\mathbf{b}\in \C^N$ couples the loop to the half-lines while the scalar $a\in\mathbb{R}$ introduces a 'Robin-type' condition
              on the difference $\phi'(-L)-\phi'(L)$.
 Note in particular, setting
    $a=0$ and $\mathbf{b}=\mathbf{0}$ in \eqref{eq:loop-bc}--\eqref{eq:half-line-bc}
    induces the pure periodic condition $\phi'(-L)=\phi'(L)$  together with $\vec\psi\,'(L)=M'\vec\psi(L)$ obtaining the fully decoupled self-adjoint extensions $(\mathcal{H}_{M'},D(\mathcal{H}_{M'}))$ with \begin{equation}
        \label{eq:decoupled_dom}D(\mathcal{H}_{M'})=\Bigl\{\mathbf{U}\in H^2(\mathcal{G})\;\Big|\;
            \phi(-L)=\phi(L),\quad
            \phi'(-L)=\phi'(L),\quad
            M'\vec\psi(L)=\vec\psi\,'(L)
        \Bigr\}.
    \end{equation} 

    Several relevant extensions arise as special cases of $\mathcal{H}_{M'}$:
    \begin{itemize}
        \item $M'=0$: the loop satisfies periodic boundary conditions and every
              half-line satisfies a Neumann condition $\psi_j'(L)=0$ at the vertex.
        \item $M'=\alpha I_N$, $\alpha\in\mathbb{R}$: uniform Robin condition on the half-lines,
    $\psi_j'(L)=\alpha\,\psi_j(L)$ for all $j$.
    \item For a real non-zero parameter $Z$, the particular choice $M'=\tfrac{1}{Z}\mathbf{1}\mathbf{1}^T$ (all entries equal to $\tfrac{1}{Z}$) is of particular
interest for future works. Opening $\tfrac{1}{Z}\mathbf{1}\mathbf{1}^T\vec\psi(L)=\vec\psi'(L)$ we get
\begin{equation}
    \label{eq:sum_cond}
    \psi_1'(L)=\cdots=\psi_N'(L), \qquad \sum_{j=1}^N\psi_j(L)=Z\psi_1'(L).
\end{equation}
We obtain then the one-parameter family of decoupled self-adjoint extensions $\mathcal{H}_Z$ with domain \begin{equation}
    \label{eq:decoupled_delta'}
     D(\mathcal{H}_{Z})=\Bigl\{\mathbf{U}\in H^2(\mathcal{G})\;\Big|\;
            \phi(-L)=\phi(L),\quad
            \phi'(-L)=\phi'(L),\quad
            \eqref{eq:sum_cond}
        \Bigr\}.
\end{equation}
    \end{itemize}
\end{remark}    
\end{example}

\subsection{Extensions generating unitary dynamics parameterized by linear subspaces}

We now emulate the strategy of separating derivatives used for the Airy operator. We note in this case there is no actual separation of derivatives rather a simple reference frame to build some of the self-adjoint realizations. 

\begin{proposition}
    For any $\textbf{U},\textbf{V}\in D(\mathcal{H}_0^*)$ we have \begin{equation}\label{3333}
        [\mathcal{H}_0^*\textbf{U},\textbf{V}]-[\textbf{U},\mathcal{H}_0^*\textbf{V}]=\left( Q \vec{\textbf{U}}'\mid \vec{\textbf{V}} \right)-\left(Q\vec{\textbf{U}} \mid \vec{\textbf{V}}'\right)=\left( Q \vec{\textbf{U}}'\mid \vec{\textbf{V}} \right)-\left(\vec{\textbf{U}} \mid Q \vec{\textbf{V}}'\right),
    \end{equation}
    where $$Q=\operatorname{diag}(1,-1,1,\dots,1).$$
\end{proposition}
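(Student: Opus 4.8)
The plan is to prove \eqref{3333} by applying the two‑dimensional Lagrange (Green) identity on each edge of $\mathcal G$ and then collecting the boundary terms into the stated quadratic forms. Since $\mathcal H_0^*$ acts componentwise as $-\partial_x^2$ and $D(\mathcal H_0^*)=H^2(\mathcal G)$, I would first split the $L^2(\mathcal G)$‑inner product in \eqref{inner} so that
\[
[\mathcal H_0^*\textbf U,\textbf V]-[\textbf U,\mathcal H_0^*\textbf V]
=\int_{-L}^{L}\bigl(-\phi''\,\overline{\tilde\phi}+\phi\,\overline{\tilde\phi''}\bigr)\,dx
+\sum_{j=1}^{N}\int_{L}^{\infty}\bigl(-\psi_j''\,\overline{\tilde\psi_j}+\psi_j\,\overline{\tilde\psi_j''}\bigr)\,dx .
\]

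On the loop, integrating by parts twice gives the pointwise Lagrange identity $-\phi''\,\overline{\tilde\phi}+\phi\,\overline{\tilde\phi''}=\partial_x\bigl(\phi\,\overline{\tilde\phi'}-\phi'\,\overline{\tilde\phi}\bigr)$, so the loop contributes
\[
\bigl[\phi\,\overline{\tilde\phi'}-\phi'\,\overline{\tilde\phi}\bigr]_{x=-L}^{x=L}
=\phi(L)\overline{\tilde\phi'(L)}-\phi'(L)\overline{\tilde\phi(L)}-\phi(-L)\overline{\tilde\phi'(-L)}+\phi'(-L)\overline{\tilde\phi(-L)} .
\]
The same computation on $e_j=[L,\infty)$ yields $\bigl[\psi_j\,\overline{\tilde\psi_j'}-\psi_j'\,\overline{\tilde\psi_j}\bigr]_{L}^{\infty}$; the boundary terms at $+\infty$ vanish because $\psi_j,\tilde\psi_j\in H^2(L,\infty)$, hence $\psi_j,\psi_j',\tilde\psi_j,\tilde\psi_j'\to 0$ as $x\to\infty$, so the $j$‑th half‑line contributes $\psi_j'(L)\overline{\tilde\psi_j(L)}-\psi_j(L)\overline{\tilde\psi_j'(L)}$.

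It then remains to recognize the sum of all these boundary terms as $\bigl(Q\vec{\textbf U}'\mid\vec{\textbf V}\bigr)-\bigl(Q\vec{\textbf U}\mid\vec{\textbf V}'\bigr)$. Reading off the components of $\vec{\textbf U},\vec{\textbf U}',\vec{\textbf V},\vec{\textbf V}'$ from their definitions, the slot attached to $\phi(-L)$ enters with coefficient $+1$ and the slot attached to $\phi(L)$ with coefficient $-1$: this is exactly the $(1,-1)$ pattern in the upper $2\times2$ block of $Q$, and it reflects that $-L$ and $L$ are the left and right endpoints of the loop, so the evaluation $\bigl[\,\cdot\,\bigr]_{-L}^{L}$ carries opposite orientation signs at the two ends; the $N$ half‑line slots all enter with coefficient $+1$, matching the remaining diagonal entries of $Q$. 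Finally, since $Q$ is real and diagonal, hence hermitian, one has $\bigl(Q\vec{\textbf U}\mid\vec{\textbf V}'\bigr)=\bigl(\vec{\textbf U}\mid Q\vec{\textbf V}'\bigr)$, which gives the second equality in \eqref{3333}. The only delicate point is the sign bookkeeping at the two endpoints of the loop together with the vanishing of the contributions at infinity; the rest is routine integration by parts.
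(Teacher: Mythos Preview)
Your proposal is correct and is essentially the same approach as the paper's: the paper simply states that ``the proof follows integrating by parts,'' and you have carried out exactly that computation edge by edge, with the correct sign bookkeeping at $\pm L$ and the vanishing at infinity on the half-lines. The recognition of the collected boundary terms as $(Q\vec{\textbf U}'\mid\vec{\textbf V})-(Q\vec{\textbf U}\mid\vec{\textbf V}')$ and the use of $Q=Q^*$ for the second equality are both accurate.
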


\begin{definition}
    Given a subspace $Y\subset \C^{2+N}$ consider the operator $\mathcal{H}_Y$, acting as $\mathcal{H}_Y\textbf{U}=\mathcal{H}_0\textbf{U}$, with domain
    \begin{equation}
        D(\mathcal{H}_Y):=\left\{ \textbf{U}\in D(\mathcal{H}_0^*)\mid\ \vec{\textbf{U}}\in Y \ \mbox{and } Q\vec{\textbf{U}}'\in Y^\perp\right\}.
    \end{equation}
\end{definition}
The next proposition determine explicitly the adjoint operator of $\mathcal{H}_Y$. 
\begin{proposition}\label{p55}
   For any subspace $Y\subset \C^{2+N}$ the operator $\mathcal{H}_Y$ is a self-adjoint extension of $\mathcal{H}_0$.  
\end{proposition}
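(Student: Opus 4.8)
The plan is to show that $\mathcal{H}_Y$ is symmetric and that $\mathcal{H}_Y^* = \mathcal{H}_Y$ by exhibiting $D(\mathcal{H}_Y^*)$ explicitly and recognizing it equals $D(\mathcal{H}_Y)$. The key tool is the boundary identity \eqref{3333}: for $\textbf{U}, \textbf{V} \in D(\mathcal{H}_0^*)$,
\begin{equation*}
[\mathcal{H}_0^*\textbf{U},\textbf{V}]-[\textbf{U},\mathcal{H}_0^*\textbf{V}]=\left( Q \vec{\textbf{U}}'\mid \vec{\textbf{V}} \right)-\left(\vec{\textbf{U}} \mid Q \vec{\textbf{V}}'\right).
\end{equation*}
First I would verify symmetry: if $\textbf{U},\textbf{V}\in D(\mathcal{H}_Y)$, then $\vec{\textbf{U}},\vec{\textbf{V}}\in Y$ while $Q\vec{\textbf{U}}',Q\vec{\textbf{V}}'\in Y^\perp$, so both pairings on the right-hand side vanish, giving $[\mathcal{H}_Y\textbf{U},\textbf{V}]=[\textbf{U},\mathcal{H}_Y\textbf{V}]$. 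Hence $\mathcal{H}_Y\subset \mathcal{H}_Y^*$.

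Next I would identify $D(\mathcal{H}_Y^*)$. Fix $\textbf{V}\in D(\mathcal{H}_0^*)$ (any element of $D(\mathcal{H}_Y^*)$ lies in $H^2(\mathcal{G})=D(\mathcal{H}_0^*)$, since $\mathcal{H}_Y\supset \mathcal{H}_0$ forces $\mathcal{H}_Y^*\subset \mathcal{H}_0^*$). Then $\textbf{V}\in D(\mathcal{H}_Y^*)$ iff $[\mathcal{H}_0^*\textbf{U},\textbf{V}]-[\textbf{U},\mathcal{H}_0^*\textbf{V}]=0$ for all $\textbf{U}\in D(\mathcal{H}_Y)$, i.e. iff
\begin{equation*}
\left( Q \vec{\textbf{U}}'\mid \vec{\textbf{V}} \right)=\left( \vec{\textbf{U}} \mid Q \vec{\textbf{V}}' \right)\qquad\text{for all }\textbf{U}\in D(\mathcal{H}_Y).
\end{equation*}
The crucial observation — to be justified by choosing suitable $\textbf{U}\in D(\mathcal{H}_0^*)$ realizing prescribed boundary data — is that $\{\vec{\textbf{U}}:\textbf{U}\in D(\mathcal{H}_Y)\}=Y$ and $\{Q\vec{\textbf{U}}':\textbf{U}\in D(\mathcal{H}_Y)\}=Y^\perp$, and that these two can be chosen independently (vary $\vec{\textbf{U}}'$ while holding $\vec{\textbf{U}}$ fixed, and conversely). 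Setting $\vec{\textbf{U}}=0$ first, the condition becomes $(Q\vec{\textbf{U}}'\mid \vec{\textbf{V}})=0$ for all $Q\vec{\textbf{U}}'\in Y^\perp$, i.e. $\vec{\textbf{V}}\in (Y^\perp)^\perp=Y$. Then setting $\vec{\textbf{U}}'=0$ (so $Q\vec{\textbf{U}}'=0\in Y^\perp$) and letting $\vec{\textbf{U}}$ range over $Y$, the condition becomes $(\vec{\textbf{U}}\mid Q\vec{\textbf{V}}')=0$ for all $\vec{\textbf{U}}\in Y$, i.e. $Q\vec{\textbf{V}}'\in Y^\perp$. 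Hence $\textbf{V}\in D(\mathcal{H}_Y^*)$ forces $\vec{\textbf{V}}\in Y$ and $Q\vec{\textbf{V}}'\in Y^\perp$, that is $\textbf{V}\in D(\mathcal{H}_Y)$. Combined with the inclusion $\mathcal{H}_Y\subset\mathcal{H}_Y^*$, this gives $D(\mathcal{H}_Y^*)=D(\mathcal{H}_Y)$ and $\mathcal{H}_Y^*\textbf{V}=\mathcal{H}_0^*\textbf{V}=\mathcal{H}_Y\textbf{V}$, so $\mathcal{H}_Y$ is self-adjoint. Finally, since $D(\mathcal{H}_0)\subset D(\mathcal{H}_Y)$ (compactly supported data have vanishing boundary values, so $\vec{\textbf{U}}=0\in Y$ and $Q\vec{\textbf{U}}'=0\in Y^\perp$), $\mathcal{H}_Y$ is indeed an extension of $\mathcal{H}_0$.

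The main obstacle is the surjectivity-type claim that the boundary maps $\textbf{U}\mapsto \vec{\textbf{U}}$ and $\textbf{U}\mapsto Q\vec{\textbf{U}}'$ restricted to $D(\mathcal{H}_Y)$ hit exactly $Y$ and $Y^\perp$ respectively, and can be prescribed independently. This follows from the standard fact that the full trace map $\textbf{U}\mapsto(\vec{\textbf{U}},\vec{\textbf{U}}')$ from $H^2(\mathcal{G})$ onto $\C^{2+N}\times\C^{2+N}$ is surjective (one builds local bump functions near each boundary point with prescribed value and first derivative), so one can freely choose $\vec{\textbf{U}}\in Y$ and $\vec{\textbf{U}}'$ with $Q\vec{\textbf{U}}'$ any prescribed element of $Y^\perp$ (using that $Q$ is invertible). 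I would spell this out as a short lemma or inline remark, since it is the only non-formal ingredient; everything else is the same Krein-space bookkeeping already used for Theorems \ref{ref1}--\ref{ref2}, and indeed this proposition is the $\mathcal{H}$-analogue of the separated-derivative construction for the Airy operator in Section \ref{S2.4}.
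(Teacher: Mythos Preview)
Your proposal is correct and follows essentially the same approach as the paper: both arguments use the boundary identity \eqref{3333}, first verify the easy inclusion $D(\mathcal{H}_Y)\subset D(\mathcal{H}_Y^*)$, and then for the reverse inclusion choose test functions $\textbf{U}\in D(\mathcal{H}_Y)$ with $\vec{\textbf{U}}=0$ and $Q\vec{\textbf{U}}'$ ranging over $Y^\perp$ (forcing $\vec{\textbf{V}}\in Y$), and then with $\vec{\textbf{U}}'=0$ and $\vec{\textbf{U}}$ ranging over $Y$ (forcing $Q\vec{\textbf{V}}'\in Y^\perp$). You are in fact more explicit than the paper about the surjectivity of the trace map and about the inclusion $\mathcal{H}_Y^*\subset\mathcal{H}_0^*$, which the paper leaves implicit.
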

\begin{proof}
    It is enough to prove that the adjoint of $\mathcal{H}_Y$ is given by $\mathcal{H}_Y^* \textbf{V}=\mathcal{H}_0^*\textbf{V}$ with domain \begin{equation}
    D(\mathcal{H}_Y^*):=\left\{\textbf{V}\in D(\mathcal{H}_0^*)\mid \ \vec{\textbf{V}}\in Y\ \mbox{and } Q\vec{\textbf{V}}'\in Y^\perp \right\}=D(\mathcal{H}_Y).
\end{equation}
It is obvious from the definition of $D(\mathcal{H}_Y)$ that if $Q\vec{\textbf{V}}'\in Y^\perp$ and $\vec{\textbf{V}}\in Y$ then $\textbf{V}\in D(\mathcal{H}_Y^*)$. 

Let us prove the converse. Let $\textbf{V}\in D(\mathcal{H}_0^*)$. By definition $\textbf{V}$ is in $D(\mathcal{H}_Y^*)$ if and only if $[\mathcal{H}_Y\textbf{U},\textbf{V}]=[\textbf{U},\mathcal{H}_Y^*\textbf{V}]$ for all $\textbf{U}\in D(\mathcal{H}_Y)$. 

Let $x\in Y^\perp$ be arbitrary. There exists $\textbf{U}\in D(\mathcal{H}_Y)$ such that $\vec{\textbf{U}}=0$ and $Q\vec{\textbf{U}}'=x$. From \eqref{3333}, we must have $$\left(x \mid \vec{\textbf{V}}\right)=\left( Q\vec{\textbf{U}}'\mid \vec{\textbf{V}}\right)=0.$$
Since $x$ was arbitrary, it must be the case $\vec{\textbf{V}}\in (Y^\perp)^\perp=Y$.

Similarly, for $x\in Y$ there exists $\textbf{U}\in D(\mathcal{H}_Y)$ such that $\vec{\textbf{U}}'=0$ and $\vec{\textbf{U}}=x$. From \eqref{3333} we conclude it must be the case 
\begin{equation*}
    \left( x\mid Q\vec{\textbf{V}}'\right)=\left(\vec{\textbf{U}}\mid Q\vec{\textbf{V}}'\right)=0.
\end{equation*}
We therefore have $Q\vec{\textbf{V}}'\in Y^\perp$.
\end{proof}
Let us give some examples on the tadpole graph. Assume $N=1$.

\begin{example}
    Consider the space $Y_0=\mbox{span}\{(1,1,0)^T\}$. Note $\vec{\textbf{U}}\in Y_0$ if and only if $\phi(-L)=\phi(L)$ and $\psi(L)=0$. Also, $Q\vec{\textbf{U}}'\in Y^\perp$ if and only if $\phi'(-L)=\phi'(L)$. We therefore have that $$D(\mathcal{H}_{Y_0})=\{u\in D(\mathcal{H}_0^*)\mid \ \phi(-L)=\phi(L),\ \psi(L)=0\, \mbox{and }\phi'(L)=\phi'(-L)\}.$$
\end{example}

\subsection{The $\mathcal{T}$-shaped graph case}
The abstract theory developed for the looping-edge graph $\mathcal{G}$ can be extended to address $\mathcal{T}$-shaped graphs (see Figure~\ref{fig:t}). These graphs are likewise characterized by a metric graph structure with $V=\{-L, L\}$ and $E=\{[-L, L], [L, \infty), \dots, [L, \infty)\}$. In this setting, the vertex $-L$ is treated as an independent terminal node, and the looping condition $\phi(-L)=\phi(L)$ is not imposed.

\begin{figure}
    \centering
    \includegraphics[width=0.4\linewidth]{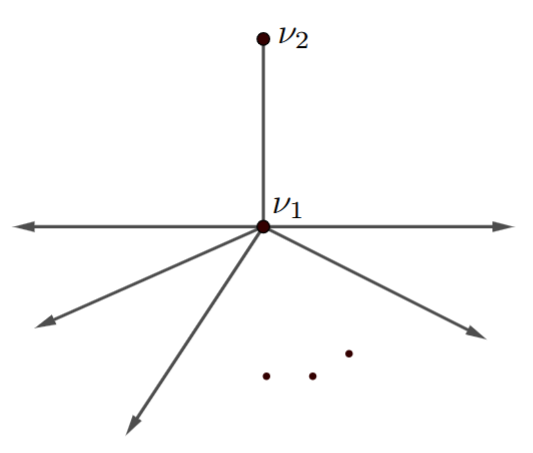}
    \caption{A $\mathcal{T}$-shaped graph}
    \label{fig:t}
\end{figure}

Via the theory of boundary systems developed in Section~\ref{sec:bdry_schr}, we characterize in the following example extension with a derivative coupling at the common vertex. 

\begin{example}[Extensions with prescribed continuity on the derivatives]\label{ex:10}

We seek all self-adjoint extensions of $\mathcal{H}_0$ whose domain includes the
condition
\begin{equation}\label{eq:Neumann-cond}
    \phi'(L)=\psi_1'(L)=\cdots=\psi_N'(L).
\end{equation}

From the boundary map definition and because of the sign in the second component, the conditions
$\phi'(L)=\psi_j'(L)$ are equivalent to
\[
    (\Gamma_1\mathbf{U})_2 + (\Gamma_1\mathbf{U})_{j+2} = 0,
    \qquad j=1,\ldots,N,
\]
or, setting $v_j := e_2+e_{j+2}\in\mathcal{J}$,
\begin{equation}\label{eq:structural}
    v_j^{\,*}\,\Gamma_1\mathbf{U}=0,
    \qquad j=1,\ldots,N.
\end{equation}

\medskip
Since the prescribed conditions place no restriction on $\Gamma_0\mathbf{U}$,
we take the full space $X=\mathcal{J}$ in Theorem~\ref{thm:SAext}.
The domain condition then reads $\mathcal{L}\,\Gamma_0\mathbf{U}=\Gamma_1\mathbf{U}$
for all $\mathbf{U}\in D(\mathcal{H}_{X,\mathcal{L}})$.
Substituting into \eqref{eq:structural},
\[
    v_j^{\,*}\,\mathcal{L}\,\Gamma_0\mathbf{U}=0
    \qquad\text{for all }\Gamma_0\mathbf{U}\in\mathcal{J},
\]
which must hold \emph{structurally}, i.e.\ as a condition on the rows of
the matrix $\mathcal{L}\in M_{N+2}(\mathbb{C})$:
\begin{equation}\label{eq:row-constraint}
    \mathrm{row}_{j+2}(\mathcal{L}) = -\,\mathrm{row}_2(\mathcal{L}),
    \qquad j=1,\ldots,N.
\end{equation}

\medskip
We require $\mathcal{L}=\mathcal{L}^*$.
The row constraints \eqref{eq:row-constraint} propagate to column constraints via
Hermiticity: $\mathrm{col}_{j+2}(\mathcal{L})=-\,\mathrm{col}_2(\mathcal{L})$ for
$j=1,\ldots,N$.
Enforcing both simultaneously and reading off the diagonal entries (which must be real),
the most general Hermitian matrix satisfying \eqref{eq:row-constraint} is

\begin{equation}\label{eq:L-matrix}
    \mathcal{L}=
    \begin{pmatrix}
        a      & \beta      & -\beta & \cdots & -\beta \\[4pt]
        \bar\beta & c       & -c     & \cdots & -c     \\[2pt]
        -\bar\beta & -c     & c      & \cdots & c      \\
        \vdots  & \vdots    & \vdots & \ddots & \vdots \\
        -\bar\beta & -c     & c      & \cdots & c
    \end{pmatrix},
    \qquad
    a,\,c\in\mathbb{R},\quad\beta\in\mathbb{C},
\end{equation}
where the lower-right $N\times N$ block is the constant matrix with every entry equal to $c$.
This constitutes a \emph{four-real-parameter family} $(a,c,\operatorname{Re}\beta,\operatorname{Im}\beta)$.

Expanding $\mathcal{L}\,\Gamma_0\mathbf{U}=\Gamma_1\mathbf{U}$ row by row and introducing the shorthand
\[
    \sigma := \phi(L)-\sum_{j=1}^N\psi_j(L),
\]
one obtains the following two independent conditions:
\begin{align}
    \phi'(-L) &= a\,\phi(-L)+\beta\,\sigma,
    \label{eq:bc1}\\[4pt]
    -\phi'(L) &= \bar\beta\,\phi(-L)+c\,\sigma.
    \label{eq:bc2}
\end{align}
The remaining $N$ equations (rows $3$ through $N+2$) each read
$\psi_j'(L)=\bar\beta\,\phi(-L)+c\,\sigma$, which is exactly $-$\eqref{eq:bc2},
thereby reproducing the prescribed condition~\eqref{eq:Neumann-cond} automatically.

\begin{theorem}
    The self-adjoint extensions of $\mathcal{H}_0$ whose domain satisfies
    \eqref{eq:Neumann-cond} form the four-real-parameter family
    $\{\mathcal{H}_{a,c,\beta}\}_{a,c\in\mathbb{R},\,\beta\in\mathbb{C}}$,
    where
    \[
        D(\mathcal{H}_{a,c,\beta})
        =\Bigl\{\mathbf{U}\in H^2(\mathcal{G})\;\Big|\;
            \eqref{eq:Neumann-cond},\;\eqref{eq:bc1},\;\eqref{eq:bc2}
        \Bigr\}.
    \]
\end{theorem}

\begin{remark}
    Setting $\beta=0$ decouples the left endpoint from the vertex:
    \eqref{eq:bc1} becomes a standard Robin condition $\phi'(-L)=a\,\phi(-L)$,
    and \eqref{eq:bc2} becomes a Robin condition at the vertex controlled by $c$
    and the combination $\sigma$. More precisely, for parameters $Z_1\in \R$, $0\neq Z_2\in \R$ we have the self-adjoint extension $(H_{Z_1,Z_2},D(H_{Z_1,Z_2}))$ with \[
     D(\mathcal{H}_{Z_1,Z_2})
        =\Bigl\{\mathbf{U}\in H^2(\mathcal{G})\;\Big|\;
            \eqref{eq:Neumann-cond},\;\phi'(-L)=Z_1\phi(-L),\;\  \sum_{j=1}^N \psi_j(L)=\phi(L)+Z_2\phi'(L)
        \Bigr\}.
    \]
    The choice $a=c=\beta=0$ yields the pure Neumann extension
    $\phi'(-L)=0$, $\phi'(L)=\psi_j'(L)=0$ for all $j$.
\end{remark}
\end{example}

\subsection{A simple application to instability of standing waves of the
cubic NLS}\label{sec:application}

Let $D_{\mathrm{ext}}=D(\mathcal{H})$ where $D(\mathcal{H})$ is defined
in \eqref{eq:decoupled_dom} with $M'=0$. That is, $D_{\mathrm{ext}}$
defines a self-adjoint extension of the Laplacian inducing pure periodic
conditions on the loop and independent pure Neumann conditions on each
half-line.

We seek real-valued standing waves
$\bU(t,x) = e^{i\omega t}\bPhi_\omega(x)$
with $\bPhi_\omega = (\phi_\omega,\psi_{1,\omega},\dots,\psi_{N,\omega})
\in D_{\mathrm{ext}}$.
Substituting into \eqref{NLS} with $p=1$, each component of $\bPhi_\omega$
satisfies an independent stationary NLS equation $f''+f^3-\omega f=0$ on
its respective edge.

For the loop component we consider a positive, real, $2L$-periodic solution
constructed via the Jacobi dnoidal function. By Theorem~2.1 of \cite{An3},
for $\omega>\omega^*:=\tfrac{\pi^2}{2L^2}$, the function
\begin{equation}\label{d1}
  \phi_\omega(x)=\eta_1\, \mathrm{dn}\!\left(\frac{\eta_1}{\sqrt{2}}\,x;\,k\right)
\end{equation}
with elliptic modulus $k\in (0,1)$ determined by
\begin{equation}\label{d2}
  k^2(\eta,\omega)=\frac{2\omega-2\eta^2_2}{2\omega-\eta^2_2},\qquad
  \eta^2_1+\eta^2_2=2\omega,\qquad 0<\eta_2<\eta_1,
\end{equation}
solves the stationary loop equation. The half-line stationary equation is
solved by the \emph{half-soliton}
\[
  \psi_{\omega}(x)
  =\sqrt{2\omega}\,\sech\!\bigl(\sqrt{\omega}(x-L)\bigr),
  \qquad x\in[L,+\infty),
\]
which is the restriction to $[L,+\infty)$ of the full soliton centred at
$x=L$. The Neumann condition $\psi_{\omega}'(L)=0$ holds automatically
because the full soliton is even about $L$. For $j=1,\dots,n\leq N$ we
therefore set $\psi_{j,\omega}\equiv\psi_{\omega}$, and for
$j=n+1,\dots,N$ we set $\psi_{j,\omega}\equiv 0$. The standing wave
profile under study is thus (see Figure~\ref{fig:profile})
\begin{equation}\label{eq:profile}
  \bPhi_{\omega,n}\equiv\bPhi_\omega
  = \bigl(\phi_\omega,\,
    \underbrace{\psi_{1,\omega},\dots,\psi_{n,\omega}}_{n\text{ half-solitons}},\,
    \underbrace{0,\dots,0}_{N-n\text{ zero-solitons}}\bigr)
  \in D_{\mathrm{ext}},
  \qquad\omega>\omega^*.
\end{equation}

\begin{figure}
    \centering
    \includegraphics[width=0.95\linewidth]{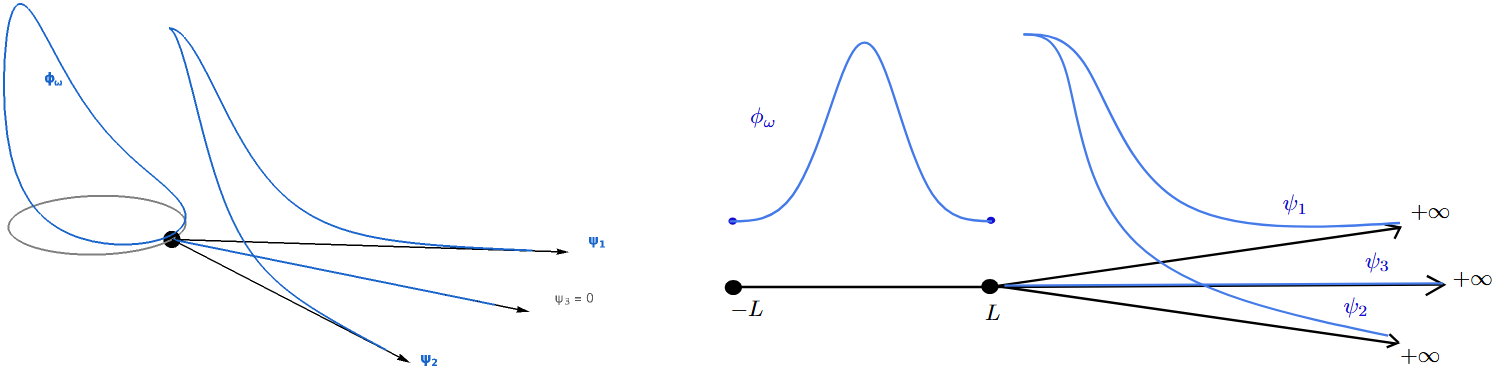}
    \caption{Schematic of the standing wave profile $\Phi_{\omega,n}$ with $N=3$ and $n=2$.}
    \label{fig:profile}
\end{figure}

\begin{theorem}
\label{thm:phase-drift}
Let $L>0$ and $N\ge 1$. For every $n\in\{1,\dots,N\}$ and every
$\omega>\omega_*$, the orbit
\[
  \mathcal{O}(\mathbf{\Phi}_\omega)
  = \bigl\{e^{i\theta}\bigl(\phi_\omega(\cdot+s),\,
    \psi_{1,\omega},\dots,\psi_{n,\omega},0,\dots,0\bigr)
    :\theta\in\mathbb{R},\;s\in\mathbb{R}/(2L\mathbb{Z})\bigr\}
\]
is orbitally unstable in $H^1(\mathcal{G})$.
\end{theorem}

\begin{proof}
The decoupled structure of $D_{\mathrm{ext}}$ reduces global well-posedness
on $H^1(\mathcal{G})$ to independent problems on each edge. On the loop
$[-L,L]$ with periodic boundary conditions, global well-posedness in $H^1$
is classical and follows from semigroup theory together with conservation of
energy and charge. On each half-line $[L,+\infty)$
with Neumann boundary condition $\psi'(L)=0$, well-posedness is reduced to
the full-line case via the reflection method: extending $\psi$ to an even
function on $\mathbb{R}$ embeds the problem into the cubic NLS on
$\mathbb{R}$ restricted to even initial data, which is invariant under the
flow. Global solutions for all $t\in\mathbb{R}$ are then obtained from the
conservation of energy and charge via the Gagliardo--Nirenberg inequality.

\medskip
Fix $\varepsilon>0$ and define the initial datum
\[
  \mathbf{U}_\varepsilon(0)
  := \bigl(\phi_\omega,\;\psi_{1,\omega+\varepsilon},\;
     \psi_{2,\omega},\dots,\psi_{n,\omega},\;0,\dots,0\bigr),
\]
where
$\psi_{1,\omega+\varepsilon}(x)
  = \sqrt{2(\omega+\varepsilon)}\,
    \operatorname{sech}\!\bigl(\sqrt{\omega+\varepsilon}(x-L)\bigr)$
is the half-soliton at the shifted frequency $\omega+\varepsilon$.
Since $\omega\mapsto\psi_{1,\omega}$ is $C^1$ as a map into
$H^1([L,+\infty))$, the mean value inequality in Banach spaces
gives
\begin{equation}\label{eq:init-dist}
  \|\mathbf{U}_\varepsilon(0)-\mathbf{\Phi}_\omega\|_{H^1(\mathcal{G})}
  = \|\psi_{1,\omega+\varepsilon}-\psi_{1,\omega}\|_{H^1([L,+\infty))}
  \le C_\omega\,\varepsilon,
\end{equation}
where
$C_\omega
  = \sup_{\omega'\in(\omega,\,\omega+1)}
    \|\partial_\omega\psi_{1,\omega'}\|_{H^1([L,+\infty))}<\infty$.
In particular,
$\mathrm{dist}_{H^1}(\mathbf{U}_\varepsilon(0),\mathcal{O}(\mathbf{\Phi}_\omega))
\to 0$ as $\varepsilon\to 0$.

\medskip
Since every component of $\mathbf{U}_\varepsilon(0)$ is an exact stationary
profile at its respective frequency, each component satisfies its own NLS
equation with its boundary conditions. By the uniqueness of $H^1$ solutions,
the global solution is therefore given by
\begin{equation}\label{eq:exact-sol}
  \mathbf{U}_\varepsilon(t)
  = \bigl(
      e^{i\omega t}\phi_\omega,\;
      e^{i(\omega+\varepsilon)t}\psi_{1,\omega+\varepsilon},\;
      e^{i\omega t}\psi_{2,\omega},\;
      \dots,\;
      e^{i\omega t}\psi_{n,\omega},\;
      0,\dots,0
    \bigr).
\end{equation}
At the time $t_\varepsilon:=\pi/\varepsilon$ one has
$e^{i(\omega+\varepsilon)t_\varepsilon}
= e^{i\omega t_\varepsilon}\cdot e^{i\pi}
= -e^{i\omega t_\varepsilon}$,
so the solution becomes
\begin{equation}\label{eq:sign-flip}
  \mathbf{U}_\varepsilon(t_\varepsilon)
  = e^{i\omega t_\varepsilon}
    \bigl(\phi_\omega,\;-\psi_{1,\omega+\varepsilon},\;
     \psi_{2,\omega},\dots,\psi_{n,\omega},\;0,\dots\bigr),
\end{equation}
that is, the first half-line component has flipped sign relative to all
others.

\medskip
We now derive a lower bound on the distance from the orbit at time
$t_\varepsilon$. Take any orbit element
$\mathbf{V}=e^{i\theta}(\phi_\omega(\cdot+s),\psi_{1,\omega},\dots)
\in\mathcal{O}(\mathbf{\Phi}_\omega)$
and set $\mu:=\theta-\omega t_\varepsilon$. Since the components are
supported on disjoint edges and
$\|(1-e^{i\mu})\psi_{j,\omega}\|_{L^2}^2\geq 0$ for $j=2,\dots,N$,
we obtain
\begin{equation}\label{eq:quad-form}
  \begin{split}
    \|\mathbf{U}_\varepsilon(t_\varepsilon)-\mathbf{V}\|_{L^2(\mathcal{G})}^2
    &\geq
      \|\phi_\omega - e^{i\mu}\phi_\omega(\cdot+s)\|_{L^2}^2
      + \|\psi_{1,\omega+\varepsilon}+e^{i\mu}\psi_{1,\omega}\|_{L^2}^2\\
    &= A + 2\cos(\mu)\bigl(P-q(s)\bigr),
  \end{split}
\end{equation}
where
\begin{equation*}
  \begin{split}
    A &:= 2\|\phi_\omega\|_{L^2}^2
         + \|\psi_{1,\omega+\varepsilon}\|_{L^2}^2
         + \|\psi_{1,\omega}\|_{L^2}^2,\\
    P &:= \langle\psi_{1,\omega+\varepsilon},\psi_{1,\omega}
               \rangle_{L^2([L,+\infty))},\\
    q(s) &:= \langle\phi_\omega,\phi_\omega(\cdot+s)
                 \rangle_{L^2([-L,L])}.
  \end{split}
\end{equation*}
Since $\cos\mu\in[-1,1]$, minimising the right-hand side of
\eqref{eq:quad-form} over $\mu$ and then taking the infimum over
$(\theta,s)$ gives
\begin{equation}\label{eq:inf-orbit}
  \inf_{(\theta,s)}
  \|\mathbf{U}_\varepsilon(t_\varepsilon)-\mathbf{V}\|_{L^2}^2
  \geq A - 2\sup_{s\in\mathbb{R}/(2L\mathbb{Z})}|P-q(s)|.
\end{equation}
It is shown in Appendix~\ref{ap:lower_bound} that
\[
  A - 2\sup_{s}|P-q(s)|
  \geq \min\!\bigl(4\sqrt{\omega},\;2q_{\min}\bigr)
  =: c_0^2 > 0,
\]
where $q_{\min}:=\min_{s}q(s)>0$ and $c_0$ depends only on $\omega$, $L$,
and $k$, but not on $\varepsilon$. Combining with \eqref{eq:inf-orbit} we conclude
\begin{equation}\label{eq:lower-bound}
  \mathrm{dist}_{H^1}(\mathbf{U}_\varepsilon(t_\varepsilon),\,
    \mathcal{O}(\mathbf{\Phi}_\omega))
  \geq
  \mathrm{dist}_{L^2}(\mathbf{U}_\varepsilon(t_\varepsilon),\,
    \mathcal{O}(\mathbf{\Phi}_\omega))
  \geq c_0 > 0.
\end{equation}

\medskip
Finally, given $\delta>0$ arbitrary, one can choose $\varepsilon>0$ small enough that
$C_\omega\varepsilon<\delta$. Then \eqref{eq:init-dist} gives
$\mathrm{dist}_{H^1}(\mathbf{U}_\varepsilon(0),
\mathcal{O}(\mathbf{\Phi}_\omega))<\delta$,
while \eqref{eq:lower-bound} gives
$\mathrm{dist}_{H^1}(\mathbf{U}_\varepsilon(t_\varepsilon),
\mathcal{O}(\mathbf{\Phi}_\omega))\geq c_0$.
This is precisely the opposite of orbital
stability, and the proof is complete.
\end{proof}

\section*{Acknowledgments}

\noindent 
The authors express their gratitude to Prof. Nataliia Goloshchapova (IME-USP) for her invaluable comments on boundary systems, which significantly improved the results presented in the second part of this manuscript.

This study is financed, in part, by the S\~ao Paulo Research Foundation (FAPESP), Brazil. Process number 2024/20623-7. \\
A. Mu\~noz was partially supported by CNPq, Conselho Nacional de Desenvolvimento Cient\'{\i}fico e Tecnol\'ogico - Brazil grant 170246/2023-0 and INCTMat, Instituto Nacional de Ci\^encia e Tecnologia de Matem\'atica - Brazil. 
J. Angulo was partially funded by CNPq/Brazil Grant.

\appendix

\section{Lower bound computation}\label{ap:lower_bound}

In this appendix we establish the inequality
\[
  A - 2\sup_{s\in\mathbb{R}/2L\mathbb{Z}}|P-q(s)|
  \geq \min\!\bigl(4\sqrt{\omega},\;2q_{\min}\bigr)
  =: c_0^2 > 0,
\]
used in Section~\ref{sec:application}.

\medskip
Since $\phi_\omega$ and $\psi_\omega$ are strictly positive, both $P$ and
$q(s)$ are strictly positive. The map $s\mapsto q(s)$ is continuous on the
compact set $\mathbb{R}/2L\mathbb{Z}$, so it attains strictly positive
extrema $0<q_{\min}<q_{\max}$. The maximum is attained at $s=0$, with
$q_{\max}=q(0)=\|\phi_\omega\|^2_{L^2}$, as a consequence of the
Cauchy--Schwarz inequality and the invariance of the $L^2$ norm under
periodic translation. Since $P$ is independent of $s$, the convexity of
the absolute value gives
\[
  \sup_s|P-q(s)|\leq\max\bigl(|P-q_{\min}|,\;|P-q_{\max}|\bigr).
\]
We argue in three cases according to the position of $P$ relative to
$[q_{\min},q_{\max}]$.

\bigskip\noindent
\textbf{Case 1: $P\leq q_{\min}$.}
Since $q(s)\geq q_{\min}\geq P$ for all $s$, one has $|P-q(s)|=q(s)-P$
and the supremum is attained at $s=0$:
\[
  \sup_s|P-q(s)| = q_{\max}-P = \|\phi_\omega\|^2_{L^2}-P.
\]
Substituting back and cancelling the loop norm terms:
\[
  \begin{split}
    A - 2\sup_s|P-q(s)|
    &= 2\|\phi_\omega\|_{L^2}^2
       + \|\psi_{1,\omega+\varepsilon}\|_{L^2}^2
       + \|\psi_{1,\omega}\|_{L^2}^2
       - 2\|\phi_\omega\|_{L^2}^2 + 2P\\
    &= \|\psi_{1,\omega+\varepsilon}\|_{L^2}^2
       + \|\psi_{1,\omega}\|_{L^2}^2 + 2P.
  \end{split}
\]
All three terms are non-negative and
$\|\psi_{1,\omega}\|_{L^2}^2 = 2\sqrt{\omega}$, so
\[
  A - 2\sup_s|P-q(s)|
  \geq 2\|\psi_{1,\omega}\|_{L^2}^2
  = 4\sqrt{\omega} > 0.
\]

\bigskip\noindent
\textbf{Case 2: $q_{\min}\leq P\leq q_{\max}$.}
Here $|P-q_{\min}|=P-q_{\min}$ and $|P-q_{\max}|=q_{\max}-P$, so
\[
  \sup_s|P-q(s)| = \max(P-q_{\min},\;q_{\max}-P).
\]

\smallskip\noindent
\textit{Subcase~2a: $q_{\max}-P\geq P-q_{\min}$.}
The supremum equals $q_{\max}-P=\|\phi_\omega\|_{L^2}^2-P$, and the same
computation as in Case~1 yields
\[
  A-2\sup_s|P-q(s)|
  = \|\psi_{1,\omega+\varepsilon}\|_{L^2}^2
    +\|\psi_{1,\omega}\|_{L^2}^2
    +2P
  \geq 4\sqrt{\omega} > 0.
\]

\smallskip\noindent
\textit{Subcase~2b: $P-q_{\min}>q_{\max}-P$.}
The supremum equals $P-q_{\min}$. Since
$P\leq q_{\max}=\|\phi_\omega\|_{L^2}^2$, we have
$2\|\phi_\omega\|_{L^2}^2-2P\geq 0$, and therefore
\[
  \begin{split}
    A-2\sup_s|P-q(s)|
    &= 2\|\phi_\omega\|_{L^2}^2
       + \|\psi_{1,\omega+\varepsilon}\|_{L^2}^2
       + \|\psi_{1,\omega}\|_{L^2}^2
       - 2P + 2q_{\min}\\
    &\geq \|\psi_{1,\omega+\varepsilon}\|_{L^2}^2
          + \|\psi_{1,\omega}\|_{L^2}^2 + 2q_{\min}
     \geq 2q_{\min} > 0.
  \end{split}
\]

Combining both subcases, in Case~2:
\[
  A-2\sup_s|P-q(s)|\geq\min\bigl(4\sqrt{\omega},\;2q_{\min}\bigr)>0.
\]

\bigskip\noindent
\textbf{Case 3: $P>q_{\max}$.}
Since $P>q(s)$ for all $s$, one has $|P-q(s)|=P-q(s)$ and the supremum
is attained at the minimizer of $q$:
\[
  \sup_s|P-q(s)| = P - q_{\min}.
\]
By the Cauchy--Schwarz inequality,
$P\leq\|\psi_{1,\omega+\varepsilon}\|_{L^2}\|\psi_{1,\omega}\|_{L^2}$,
hence
\[
  \|\psi_{1,\omega+\varepsilon}\|_{L^2}^2
  + \|\psi_{1,\omega}\|_{L^2}^2 - 2P
  \geq
  \bigl(\|\psi_{1,\omega+\varepsilon}\|_{L^2}
        - \|\psi_{1,\omega}\|_{L^2}\bigr)^2
  \geq 0.
\]
Therefore:
\[
  \begin{split}
    A - 2\sup_s|P-q(s)|
    &= 2\|\phi_\omega\|_{L^2}^2 + 2q_{\min}
       + \bigl(\|\psi_{1,\omega+\varepsilon}\|_{L^2}^2
               + \|\psi_{1,\omega}\|_{L^2}^2 - 2P\bigr)\\
    &\geq 2\|\phi_\omega\|_{L^2}^2 + 2q_{\min}
     \geq 2q_{\min} > 0.
  \end{split}
\]

\medskip\noindent
Collecting all cases, we conclude that
\[
  A - 2\sup_{s\in\mathbb{R}/2L\mathbb{Z}}|P-q(s)|
  \geq \min\!\bigl(4\sqrt{\omega},\;2q_{\min}\bigr)
  =: c_0^2 > 0,
\]
where $c_0^2$ depends only on $\omega$, $L$, and the elliptic modulus
$k=k(\omega)$, but not on $\varepsilon$.


\begin{thebibliography}{99}
    
\bibitem{AdaNoj15}
R. Adami, C. Cacciapuoti, D. Finco, and D. Noja,  
{\it Stable standing waves for a NLS on star graphs as local 
minimizers of the constrained energy,}
 J. Differential Equations, 260 (2016),  7397--7415.
 
\bibitem{AdaNoj14}
R. Adami, C. Cacciapuoti, D. Finco, and D. Noja, 
{\it Variational
properties and orbital stability of standing waves for NLS
equation on a star graph,}
 J. Differential Equations, 257 (2014),
3738--3777.

\bibitem{AST}
R. Adami, E. Serra, and P. Tilli,
{\it NLS ground states on graphs,}
 Calc. Var. Partial Differential Equations, 54 (2015), no. 1,
743--761.

\bibitem{ASTcri} R. Adami, E. Serra, and P. Tilli,{\it Negative energy ground states for the $L^2$-critical NLSE on metric  graphs,} Comm. Math. Phys. 352  (2017), 387--406.
 
\bibitem{ASTmul}
R. Adami, E. Serra, and P. Tilli,
{\it Multiple positive bound states for the subcritical NLS equation on metric graphs,}
 Calc. Var., 58 (2019), no. 1,5, 16pp.
 
 \bibitem{AmCrep} K. Ammari and E. Cr\'{e}peau,  {\it Feedback Stabilization and Boundary Controllability  of the  Korteweg--de Vries Equation on a Star-Shaped Network}, SIAM Journal on Control and Optimization, 56(3), pp. 1620--1639, (2018)

\bibitem{An1}  J. Angulo, {\it Stability theory for two-lobe states on the tadpole graph for the NLS equation}, Nonlinearity, 37, 045015, (2024)


\bibitem{An2}  J. Angulo, \textit{Stability theory for the NLS on looping edge graphs}, Math. Z.,  308, 19 (2024). 

\bibitem{An3} 
J. Angulo. 
{\it Nonlinear stability of periodic traveling wave solutions to the Schr\"odinger and the modified Korteweg-de Vries equations,} 
JDE, 235, 1--30. 2007.
 
  \bibitem{AC} J. Angulo  and  M. Cavalcante, \textit{Nonlinear Dispersive Equations on Star Graphs}, $32^o$ Col\'oquio Brasileiro de Matem\'atica,  IMPA, (2019).

\bibitem{AC1} J. Angulo  and  M. Cavalcante, \textit{Linear instability of stationary solitons for the Korteweg-de Vries equation on a star graph},  Nonlinearity, 34, (2021), 3373-3410.

\bibitem{AC2} J. Angulo  and  M. Cavalcante, \textit{dynamics of the Korteweg-de Vries equation on a balanced metric graph}. To appear 
in  the Bulletin of the Brazilian Mathematical Society, New Series (BSBM), (2024).





\bibitem{AngGol17a} 
J. Angulo and  N. Goloshchapova, 
{\it On the orbital instability of excited states for the NLS equation with the
$\delta$-interaction on a star graph,} Discrete Contin. Dyn. Syst. A., 38(10), (2018), 5039--5066.

\bibitem{AngGol17b} 
J. Angulo and  N. Goloshchapova, 
{\it Extension theory approach in the stability of the standing waves for the NLS equation with point interactions on a star graph}, Adv. Differential Equations 23 (11-12), (2018), 793--846.

\bibitem{AP1} J. Angulo and R. Plaza, \textit{Instability of static solutions of the sine-Gordon equation
on a $Y$-junction graph with $\delta$-interaction}, J. Nonlinear Science, 31, 50, (2021).
 
\bibitem{AP2} J. Angulo and R. Plaza, \textit{Instability theory of kink and anti-kink profiles for the sine-Gordon on Josephson tricrystal boundaries}, Physica D, v. 427, 133020, (2021).
    
   \bibitem{AP3}  J. Angulo and R. Plaza, \textit{Unstable kink and anti-kink profiles for the sine-Gordon
on a $Y$ -junction graph with $\delta' $-interaction at the vertex}, Mathematische Zeitschrift, 300, 2885--2915 (2022). 


 \bibitem{Ardila}  A. H. Ardila, \textit{ Orbital stability of standing waves for supercritical NLS with potential on graphs}, Applicable
Analysis, v. 99, 8, (2020), 1359-1372.


\bibitem{BK}
G. Berkolaiko  and  P. Kuchment, {\it Introduction to Quantum Graphs},
 Mathematical Surveys and Monographs, 
 186, Amer. Math.
Soc., Providence, RI, 2013.

\bibitem{BMP}
G. Berkolaiko  , J. L. Marzuola, and D. E. Pelinovsky,
{\it Edge-localized states on quantum graphs in the limit of large mass},
 Annales de l'Institut Henri Poincare C, Analyse Non Lineaire, 38 (2021), 1295-1335.

\bibitem{BlaExn08}
J. Blank, P. Exner,  and M. Havlicek, 
{\it Hilbert Space Operators in Quantum Physics},
 2nd edition, Theoretical and Mathematical
Physics, Springer, New York, 2008.


\bibitem{BurCas01}
R. Burioni, D. Cassi, M. Rasetti, P. Sodano, and A. Vezzani,
{\it Bose-Einstein condensation on inhomogeneous
complex networks,}
 J. Phys. B: At. Mol. Opt. Phys., 34 (2001),
4697--4710.




\bibitem{CFN} C. Cacciapuoti, D. Finco, and  D. Noja, {\it Topology induced bifurcations for the NLS on the tadpole graph}, Phys.
Rev. E 91 (2015), 013206.

\bibitem{CFN2} C. Cacciapuoti, D. Finco, and  D. Noja, {\it Ground state and orbital stability for the NLS equation on a general starlike graph with potentials}, Nonlinearity 30, 8,  (2017), 3271-3303.

\bibitem{Cav1} M. Cavalcante, {\it The Korteweg-de Vries equation on a metric star graph}, Z. Angew. Math. Phys. 69, Art. 124 (2018).


\bibitem{Chuiko} G. P. Chuiko, O. V.  Dvornik, S.I.  Shyian, and Y.A. Baganov,  {\it A new age-related model for blood stroke volume.} Computers in Biology and Medicine, 79 (2016) 144--148.

\bibitem{Crepeau} E. Cr\'epeau and M. Sorine,  {\it A reduced model of pulsatile flow in an arterial compartment.} Chaos Solitons
Fractals, 34 (2) (2007), 594--605.

\bibitem{Engel}
Engel K.J. and Nagel R.  {\it One parameter semigroups for linear evolution equations},  Springer New York, (2006).


\bibitem{Ex} P. Exner,  {\it Magnetoresonance on a lasso graph}, Foundations of Physics, v. 27, Article number: 171 (1997).



\bibitem{ExS} P. Exner, P. $\check{S}$eba, {\it  Free quantum motion on a branching graph}, Rep.
Math.Phys. 28 (1989), 7--26.
 
\bibitem{ExSere} P. Exner, E.  $\check{S}$ere$\check{s}$ov\'a, {\it Appendix resonances on a simple graph}, J. Phys. A27 (1994), 8269--8278.
 
 
\bibitem{Fid15} 
F. Fidaleo, 
{\it Harmonic analysis on inhomogeneous
amenable networks and the Bose-Einstein condensation,}
 J. Stat. Phys., 160 (2015), 715--759.
 

\bibitem{GSS1} 
M. Grillakis, J. Shatah and W. Strauss. 
{\it Stability theory of solitary waves in the presence of symmetry, I,} 
J. Funct. Anal., 74(1), 160--197. 1987.


	
\bibitem{KP} A. Kairzhan  and  D. E.  Pelinovsky,  {\it Multi-pulse edge-localized states on quantum graphs},  Analysis and Mathematical Physics, 11, (2021), 171 (26pp). 	
	
\bibitem{KPG} A. Kairzhan, D. E. Pelinovsky, and R. Goodman,  {\it drift of spectrally stable shifted states on star graphs}, SIAM, Journal on Applied Dynamical Systems,  18, (2019), 1723--1755.

\bibitem{KMPX} A. Kairzhan, R. Marangell, D. E.  Pelinovsky, and K. Xiao,  {\it Existence of standing waves on a flower graph}, 
J.  Differential Equations,  271, (2021), 719--763.  

\bibitem{KNP} A. Kairzhan, D. Noja and  D. E.  Pelinovsky,  {\it Standing waves on quantum graphs},  J. Phys. A; Math. Theor.  55, (2022), 243001 (51pp). 


\bibitem{K}
P. Kuchment, 
{\it Quantum graphs, I. Some basic structures,}
Waves Random Media, 14 (2004), 107--128.



\bibitem{MP} J.L. Marzuola and D. E. Pelinovsky, {\it Ground state on the dumbbell graph,} Appl. Math. Res. Express. AMRX. 1,(2016), 98--145.




\bibitem{Mug15} 
D. Mugnolo, 
{\it Mathematical Technology of Networks},
 Bielefeld, December
2013, Springer Proceedings in Mathematics $\&$ Statistics 128,
2015.


\bibitem{MNS} 
D. Mugnolo, D. Noja and C. Seifter, {\it  Airy-type evolution equations on star graphs}, 
Anal. PDE, V. 11,  (2018), 1625-1652.

\bibitem{MNS2} 
D. Mugnolo, D. Noja and C. Seifter, {\it  On solitary waves for the Korteweg-de Vries equation on metric star graphs}, In: Ball, J., Tylli, HO., Virtanen, J.A. (eds) Operator Theory, Related Fields, and Applications. IWOTA 2023. Operator Theory: Advances and Applications, vol 307. Birkhäuser, Cham. (2025).





 \bibitem{Noj14}
D. Noja, {\it  Nonlinear Schr\"odinger equation on graphs: recent results and open problems}, 
Philos. Trans. R. Soc. Lond. Ser. A Math. Phys. Eng. Sci., 372 (2014),
 20130002, 20 pp.
 
 \bibitem{NPS} D. Noja, D. E. Pelinovsky, and G. Shaikhova, {\it Bifurcations and stability of standing waves in the nonlinear Schr\"odinger equation on the tadpole graph}, Nonlinearity 28, (2015), 2343-2378.

\bibitem{AN2016}
F. Natali and J. Angulo.
{\it On the instability of periodic waves for dispersive equations}.
Diff. Int. Eq., 29 (9/10), 837 - 874. 2016.

 
\bibitem{NP}  D. Noja and D. E. Pelinovsky, {\it Standing waves of the quintic NLS equation on the tadpole graph}, Calc. Var. 59, 173 (2020).

\bibitem{Pan}  A. Pankov, {\it Nonlinear Schr\"odinger equations on periodic metric  graphs}, Discrete and Continuous Dynamical Systems-A, 38 (2018), no. 4, 697--714.


\bibitem{PCP} H. Parada, E. Crépeau and C. Prieur, {\it Stability of KdV equation on a network with bounded and unbounded branches}, ESAIM: COCV, 30 , 84 (2024).

 \bibitem{Schu2015} C. Schubert, C. Seifert, J. Voigt and M. Waurick, \textit{Boundary systems and (skew-) self-adjoint operators on infinite metric graphs}, Math. Nachr., 288:1776-1785, (2015).

\bibitem{SBM} Z.A. Sobirov, D. Babajanov,  and D. Matrasulov, {\it Nonlinear standing waves on planar branched systems: Shrinking into metric graph}, Nanosystems,  8 (2017) 29--37.




\end{thebibliography}
\end{document}